\numberwithin{equation}{section}
\theoremstyle{plain} %% This is the default, anyway
\newtheorem{thm}[equation]{Theorem}
\newtheorem{cor}[equation]{Corollary}
\newtheorem{lem}[equation]{Lemma}
\newtheorem{claim}[equation]{Claim}
\newtheorem{prop}[equation]{Proposition}
\newtheorem{conj}[equation]{Conjecture}
\theoremstyle{definition}
\newtheorem{defn}[equation]{Definition}
\theoremstyle{remark}
\newtheorem{rem}[equation]{Remark}
\def\integers{{\mathbb{Z}}}
\def\reals{{\mathbb R}}
\def\rationals{{\mathbb Q}}
\def\hp2{{\mathbb{HP}^2}}
\def\disk{{\mathbb D}}
\def\sphere{{\mathbb S}}
\def\varep{\varepsilon}
\def\be{\begin{enumerate}}
\def\ee{\end{enumerate}}
\def\rank{\hbox{rank}}
\def\modd{\!\!\!\mod}
\def\bigno{\bigskip\noindent}
\def\calk{{\mathcal K}}
\def\limone{\varprojlim\,\!\!^1}
\def\medno{\medskip\noindent}
\def\ind{\mathrm{ind}\,}
\def\supp{\mathrm{Supp}}
\def\almostzero{
\left(
\begin{array}{cc} 
I&0\\
0&0
\end{array}
\right)
}
\def\almostzerotwo{
\left(
\begin{array}{cc} 
0&0\\
0&I
\end{array}
\right)
}
\def\matrixtwo{
\left(\begin{array}{cc} 1&0 \\ 0&0\end{array}\right)}
\begin{document}

%%% In the title, use a double backslash "\\" to show a linebreak:
%%% Use one of following two forms:
%%% \title{Text of the title}
%%% or
%%% \title[Short form for the running head]{Text of the title}

\title[A new index theory for noncompact manifolds]
{Positive scalar curvature and a new index theory for noncompact manifolds}

\author{Stanley Chang, Shmuel Weinberger, Guoliang Yu}

\address{Department of Mathematics\\
  Wellesley College\\
  Wellesley, MA 02481}
  \email{schang@wellesley.edu}

\address{
   Department of Mathematics\\
   University of Chicago\\
   Chicago, IL 60637}
   \email{shmuel@math.uchicago.edu}

\address{
Department of Mathematics\\
Texas A \& M University\\
College Station, TX 77843\\
and the Shanghai Center for Mathematical Sciences, China}
\email{guoliangyu@math.tamu.edu}

\thanks{The first author is supported by a Wellesley College Brachman-Hoffman
Fellowship. The second and third authors are supported by NSF grants.}
%%% Email address is optional.

%%% To have the current date inserted, use \date{\today}:
\date{\today}

%\keywords{positive scalar curvature, noncompact manifolds, first derived functor}

\begin{abstract}
In this article, we develop a new index theory for noncompact manifolds endowed with an
admissible exhaustion
by compact sets. This index theory allows us to provide examples of noncompact manifolds with exotic positive scalar curvature phenomena.
\end{abstract}

\maketitle

\tableofcontents

%--------------------------------------------------------------------
\section{Introduction}
\label{sec:intro}

\bigno
If $M$ is an $n$-dimensional manifold endowed with a Riemannian metric
$g$, then its scalar curvature $\kappa\colon M\to\reals$ satisfies
the property that, at each point $p\in M$, there is an expansion
\begin{equation*}
\mathrm{Vol}_M(B_\varepsilon(p))=\mathrm{Vol}_{\reals^n}(B_\varepsilon(0))\left(1
-\frac{\kappa(p)}{6(n+2)}\,\varepsilon^2+\cdots\right)
\end{equation*}
 for all
sufficiently small $\varepsilon>0$. A complete Riemannian metric $g$
on a manifold $M$ is said to have  uniformly positive scalar
curvature if there is a fixed constant $\kappa_0>0$ such that
$\kappa(p)\ge\kappa_0>0$ for all $p\in M$. For compact manifolds,
obstructions to such metrics are largely achieved in one of two
ways: (1) the minimal surface techniques in dimensions at most 7 by
Schoen-Yau \cite{Schoen-Yau-1979} and in dimension 8 by Joachim and
Schick \cite{Joachim-Schick-1998};
(2) the  Dirac index method for spin manifolds by Atiyah-Singer and
its generalizations by Connes-Moscovici, Hitchin, Gromov, Lawson, Roe 
and Rosenberg, among others.

\bigno In the realm of noncompact manifolds it is now well
recognized that the original approach by Gromov-Lawson
\cite{Gromov-Lawson-1980} and
Schoen-Yau \cite{Schoen-Yau-1979},
which proves that no compact manifold of nonpositive sectional curvature
can be endowed with a metric of positive scalar curvature, is
actually based on a restriction on the coarse quasi-isometry type of
complete noncompact manifolds. Connes and Moscovici 
\cite{Connes-Moscovici-1990} develop a higher
index theory that proves that any aspherical manifold whose fundamental group is
hyperbolic does not have a metric of positive scalar curvature. 
Roe \cite{Roe-1993} subsequently introduces
a coarse index theory to study positive scalar curvature problems
for noncompact manifolds. Block and Weinberger \cite{Block-Weinberger-1999}
investigate the problem of complete metrics for noncompact symmetric
spaces when no quasi-isometry conditions are imposed. They prove
that, if $G$ is a semisimple Lie group with maximal compact subgroup
$K$ and irreducible lattice $\Gamma$, then the double quotient
$M=\Gamma\backslash G/K$ can be endowed with a complete metric
of uniformly positive scalar curvature if and only if $\Gamma$ is an
arithmetic group with $\rank_\rationals\Gamma\ge3$. This theorem
includes, in light of the work of Borel and Harish-Chandra
\cite{Borel-Harish-Chandra-1962}, 
 previous results of
%\mymar{check this reference}
Gromov-Lawson \cite{Gromov-Lawson-1980}
 in rational rank 0 and 1. In the case when the
rational rank exceeds 2, Chang proves that any metric on $M$ with
uniformly positive scalar curvature fails to be coarsely equivalent to
the natural one \cite{Chang-2001}.

\bigno
The Gromov-Lawson-Rosenberg conjecture states that a spin closed manifold $M^n$
with $n\ge5$ has a metric of positive scalar curvature if, and only if,  its Dirac index vanishes
in $KO_\ast (C^\ast_r\pi)$, where $\pi=\pi_1(M)$. While this conjecture is known to
be false in general, it has been verified in number of cases. To study compact manifolds
$(M, \partial M)$
with boundary with respect to a positive scalar curvature metric that is collared at
the boundary, one would ideally like to produce a $C^\ast$-algebra that
encodes information about both $\pi_1(M)$ and $\pi_1(\partial M)$. In this
paper we show that such an algebra
can be constructed with the appropriate properties, and apply it to obtain information
about noncompact manifolds.

\bigno
In the first section, we use the notion of localization algebras
\cite{Yu-1997}
and generalized asymptotic morphisms to define a relative group $C^\ast$-algebra
$C^\ast_{max}(\pi_1(M), \pi_1(\partial M))$ along with a homomorphism
\begin{equation*}
\mu_{max}\colon KO_\ast(
M, \partial M)
\to KO_\ast(C^\ast_{max}(\pi_1(M), \pi_1(\partial M)))
\end{equation*} which we call the
maximal relative Baum-Connes map. The usual Baum-Connes conjecture has many different
guises, the simplest of which is that the
homomorphism 
$KO_\ast^\Gamma(\underline{E}\Gamma)\to KO_\ast(C^\ast_r\Gamma)$
is an isomorphism. One may similarly hope that the map $\mu_{max}$
above is an injection if $M$ and $\partial M$ are both aspherical.
In line with the compact case, we show that, if $M$ has a metric of positive
scalar curvature that is collared near the boundary, then
the relative index of the Dirac operator in $KO_\ast(M, \partial M)$ belongs to the kernel of
$\mu_{max}$. In this section, we also
formulate a relative Gromov-Lawson-Rosenberg conjecture for manifolds with boundary and
show that the converse to the above statement holds when the relative Gromov-Lawson-Rosenberg
conjecture holds for torsion-free amenable groups satisfying certain conditions
on their cohomological dimensions.

\bigno
In the next sections, we offer a new index theory for noncompact manifolds
with so-called {\it admissible exhaustions}.
We combine this theory with the machinery built in the first part
of the paper to give various geometric applications:
we first construct a noncompact manifold $M$ with an
exhaustion $\bigcup_{i=1}^\infty (M_i, \partial M_i)$ by compact sets with
boundary such that each $(M_i, \partial M_i)$ has a metric of positive scalar
curvature collared at the boundary, but $M$ itself has no metric of uniformly positive
scalar curvature. Next, we construct a noncompact manifold $N$ whose
space $PS(N)$ of uniformly positive scalar curvature metrics
has uncountably many connected components.

%\bigno
%In the last section,  we demonstrate that the phenomena described in the earlier sections
%all can be found in the setting of contractible manifolds.
%In particular, we show the following:
%
%\begin{thm}
%\leavevmode
%\begin{enumerate}
%\item
%For all $n\ge6$, there is a contractible manifold without a metric of positive
%scalar curvature that possesses an exhaustion by
%submanifolds admitting positive scalar curvature metrics.
%\item
%For all $n\ge5$, there is a contractible manifold whose space of positive scalar
%curvature metrics has uncountably many components.
%\end{enumerate}
%\end{thm}
%
%\noindent
%We note that the first statement contrasts with the method of proof of the
%main result of the authors in \cite{Chang-Weinberger-Yu-2010}. The
%manifolds in the second statement can be taken to be Davis manifolds
%with appropriate ``seeds."
%The index theory is generally unchanged, but the necessary topology is more sophisticated.
%In particular we will be using proper versions of theorems by Kervaire and Hausmann-Vogel,
%along with constructions of M.~Davis and Baumslag-Dyer-Heller, to prove the existence
%of contractible manifolds with these interesting positive metric curvature properties.

\bigno
A companion paper \cite{Chang-Weinberger-Yu-2014} will 
use the techniques of this paper and more complicated topology to
obtain a contractible manifold that has a positively curved exhaustion, but no
 metric of positive scalar curvature.

\bigno
The authors are grateful to the University of Chicago, the Shanghai Center
for Mathematical Sciences and the Mathematical Sciences Research Institute for hosting
their stays when the research for this paper was conducted. The authors thank John Roe and 
Jonathan Rosenberg for useful comments and conversations, as well as the referee for
his careful reading of this paper.
They also thank Eric Hilt
and Phil Hirschhorn
for help in document formatting.

\bigskip

\section{The relative group $C^\ast$-algebra and
the relative Gromov-Lawson-Rosenberg conjecture}

\bigno
In this section, we introduce the concept of relative group $C^\ast$-algebras and formulate a relative version of the Gromov-Lawson-Rosenberg conjecture. The $K$-theory of the relative group $C^\ast$-algebras serves as the receptacle of the relative higher index of the Dirac operators.

\medskip\noindent
 In this paper all $C^\ast$-algebras are real. We deal only with metric spaces $X$
that are locally compact and metrically locally simply connected;  i.e.~for all $\varep>0$
there is $\varep'\le\varep$ such that every ball in $X$ of radius $\varep'$ is simply
connected. 

\bigskip\noindent
If $G$ is a discrete group, denote by $C^\ast_r(G)$ and $C^\ast_{max}(G)$ the
usual reduced and maximal real $C^\ast$-algebras of $G$, respectively.
Let $Y\subseteq X$ both be compact (metric) spaces. We wish to define a Baum-Connes map
from the
relative $KO$-homology group $KO^{lf}_\ast(X,Y)$ to the $KO$-theory of some
relative $C^\ast$-algebra
encoding the fundamental groups of both $Y$ and $X$ and the homomorphism
between them. Let $\phi\colon C^\ast_{max}
(\pi_1(Y))\to C^\ast_{max}(\pi_1(X))$
be the map induced by the homomorphism $j_\ast
\colon\pi_1(Y)\to \pi_1(X)$. Consider the mapping
cone $C^\ast$-algebra of $\phi$
given by
\begin{equation*}
C_{\phi, max}=\{(a,f)\colon f\in C_0([0,1), C^\ast_{max}(\pi_1(X))), ~a\in C^\ast_{max}(\pi_1(Y)),
  f(0)=\phi(a)\}.
\end{equation*}
Define $C^\ast_{max}(\pi_1(X), \pi_1(Y))$ to be the 
seventh suspension $S^7C_{\phi, max}$ of $C_{\phi, max}$, i.e.~$C_{\phi, max}\otimes
C_0(\reals^7)$, where $C_0(\reals^7)$ is the $C^\ast$-algebra of continuous real-valued 
functions on $\reals^7$ which vanish at infinity.
The seventh suspension is chosen because $KO$-theory is eight-periodic.
We call this algebra the {\it maximal relative group $C^\ast$-algebra of $(\pi_1(X), \pi_1(Y))$}.
If in fact the homomorphism $j_\ast$ is an injection, we can likewise define a
{\it reduced relative $C^\ast$-algebra} $C^\ast_{red}(\pi_1(X), \pi_1(Y))$.

\bigno
If $M$ is a metric space, we say that a Hilbert space $H$ is an
{\it $M$-module} if there is a representation of the continuous
functions $C_0(M)$ in $H$, that is, a $C^\ast$-homomorphism
$C_0(M)\to B(H)$.  We will say that an operator $T : H\to H$ is
{\it locally compact} if, for all $\varphi\in C_0(M)$, the operators
$T\varphi$ and $\varphi T$ are compact on $H$. We define the {\it
support} $\mathrm{Supp}(\varphi)$ 
of $\varphi\in H$ as the smallest closed set $K\subseteq M$
such that, if $f\in C_0(M)$ and $f\varphi\ne0$, then $f\vert_K$
is not identically zero.
An operator $T\colon
H\to H$ on an $M$-module $H$ has {\it finite propagation}
if there is $R>0$ such that $\varphi T\psi =0$ whenever $\varphi$,
$\psi\in C_0(M)$ satisfy $d(\hbox{Supp}(\varphi ),\hbox{Supp}(\psi ))
>R$. 

\bigskip\noindent
Recall that a locally compact metric space $Z$ is said to have {\it bounded geometry}
if there is a discrete subset $Y\subseteq Z$ such that (1) $Y$ is $c$-dense for some $c\ge0$,
i.e.~$d(z,Y)\le c$ for all $z\in Z$; (2) for all $r>0$ there is $N$ such that, for all $p\in Y$,
we have $\#\{y\in Y\colon d(y,p)\le r\}\le N$.
In the remainder of the article, we assume that all spaces have bounded geometry.

\begin{defn} Let $Z$ be a locally compact metric space. If $H$ is a Hilbert space, we denote
by $B(H)$ the algebra of bounded operators with on $H$.
\begin{enumerate}
\item Denote by $\reals(Z)$ the Roe algebra, i.e.~the algebra of locally compact, finite
propagation operators on some ample $Z$-module $H$ by way of a representation
$\rho\colon C_0(Z)\to B(H)$ (see Roe \cite[Definition 4.5]{Roe-1993}).

\item Denote by $C^\ast_{red}(Z)$ and $C^\ast_{max}(Z)$ the completions of $\reals(Z)$
with respect to the reduced and maximal norm completions, respectively. Here
we define the maximal norm in the following way. If $a\in \reals(Z)$, then
let $||a||_{max}=\sup_\psi ||\psi(a)||$, where the supremum is taking over all
$\ast$-homomorphisms $\psi\colon \reals(Z)\to B(W)$, where $W$ is
real Hilbert space. By the bounded geometry assumption, the quantity $||a||_{max}$ is finite by 
Gong-Wang-Yu \cite[Lemma 3.4]{Gong-Wang-Yu-2008}. Note that,
if $Z$ is compact, then the two completions are the same and coincide with
$\mathcal{K}$, the $C^\ast$-algebra of compact operators, as $\reals(Z)$ is
already all of $\mathcal{K}$.
\item Let $\pi_1(Z)$ act on $\widetilde{Z}$ by deck transformations and
let $\reals(\widetilde{Z})^{\pi_1(Z)}$ be the algebra of operators in $\reals(
\widetilde{Z})$ that are invariant under this action. We endow $\reals(
\widetilde{Z})^{\pi_1(Z)}$ with a maximal norm by defining $||a||_{max}=
\sup_\psi ||\psi(a)||$, where the supremum is taken over all
$\ast$-homomorphisms $\psi\colon \reals(\widetilde{Z})^{\pi_1(Z)}\to B(H)$,
where $H$ is a Hilbert space.
Note that, although $\reals(\widetilde{Z})^{\pi_1(Z)}$ is a subalgebra
of $\reals(\widetilde{Z})$,  this maximal norm is different than the one defined in (2)
because the domain of $\psi$ is different. 
\end{enumerate}
\end{defn}

\begin{defn}
For continuous maps $g\colon [0,\infty)\to \reals(Z)$, we
 define norms $||g||_{red}=\sup_{t\in [0,\infty)}
||g(t)||_{red}$ and $||g||_{max}=\sup_{t\in[0,\infty)}||g(t)||_{max}$.
Suppose that (a) $g$ is uniformly bounded and uniformly continuous, and (b)
the propagation of $g(t)$ tends to 0 as $t\to\infty$. We define the following
sets:
\begin{enumerate}
\item Denote by $\reals_L(Z)$ the collection of maps $g$ satisfying (a) and (b).
\item Denote by  $C^\ast_{L, red}(Z)$ the closure
of $\reals_L(Z)$ with respect to $||\cdot||_{red}$, called the
{\it reduced localization algebra of $X$}.
\item
Denote by  $C^\ast_{L, max}(Z)$ the closure
of $\reals_L(Z)$ with respect to $||\cdot||_{max}$, called the
{\it maximal localization algebra of $X$}. Here the maximal norm is taken as
in (2) in the previous definition.
\item Denote by $C^\ast_{red}(\widetilde{Z})^{\pi_1(Z)}$ and $C^\ast_{max}(\widetilde{Z})^{\pi_1(Z)}$
the closure of the algebra $\reals(\widetilde{Z})^{\pi_1(Z)}$ with respect to the
reduced and maximal norms, respectively.  Here the maximal norm is taken as
in (3) in the previous definition.
\item Denote by $C^\ast_{L, red}(\widetilde{Z})^{\pi_1(Z)}$ and $C^\ast_{L, max}(\widetilde{Z})^{\pi_1(Z)}$
the closure of the algebra $\reals_L(\widetilde{Z})^{\pi_1(Z)}$ with respect to the
reduced and maximal norms, respectively.  Here the maximal norm is taken as
in (3) in the previous definition.
\end{enumerate}
\end{defn}

\begin{rem}
When $Z$ is compact, then the two localization algebras in (2) and (3) coincide.
\end{rem}

\medskip\noindent
For the rest of this paper, we will simplify notation and simply write $C^\ast_L(Z)$
for either the reduced and maximal localization algebra.

\medskip\noindent
Let $X$ be a locally compact metric space. We shall briefly recall the local index map $\ind_L
\colon KO_\ast^{lf}(X)\to KO_\ast(C_L^\ast(X))$,
first introduced by Yu in \cite{Yu-1997}.
We assume that $\ast\equiv 0 \modd 8$. The other cases can be handled in a
similar way with the help of suspensions. Here $KO_\ast^{lf}(X)\equiv KO^\ast(C_0(X))$.

\medskip\noindent
Let $(H, F)$ represent  a cycle for $KO_0^{lf}(X)$, where $H$ is a standard nondegenerate $X$-module and $F$ is a bounded operator acting on $H$ such that
$F^\ast F-I$ and $F F^\ast-I$ are locally compact, and $\phi F-F\phi$ is compact for all $\phi \in C_0(X).$
For each positive integer $n$, let $\{U_{n, i}\}_i$ be a locally finite and uniformly bounded open cover of
$X$ such that $\mathrm{diam}(U_{n,i})<\frac{1}{n}$.
Let $\{ \phi_{n,i}\}_i$ be a continuous partition of unity subordinate to the open cover.
Define
$$F(t)= \sum_i (( n-t) \phi_{n,i}^{ \frac{1}{2}} F \phi_{n,i}^{ \frac{1}{2}} + (n-t+1) \phi_{n+1,i}^{ \frac{1}{2}} F \phi_{n+1,i}^{ \frac{1}{2}} ) $$
for all positive integers $n$ and $t \in [n-1,n]$, where the infinite sum converges
in the strong topology.
If $\mathrm{prop}$ denotes the propagation of an operator (again see Roe
\cite[Definition 4.5]{Roe-1993}), then
notice that
$\mathrm{prop} (F(t))\rightarrow 0$ as $t\rightarrow \infty.$

\def\matrixtwo{
\left(\begin{array}{cc} 1&0 \\ 0&0\end{array}\right)}

\medskip\noindent
Observe that $F(t)$ is a multiplier of the localization algebra $C^\ast_L(X)$ and is invertible modulo the localization algebra.
Hence the standard index construction in $K$-theory gives
\begin{equation*}
\ind_L([(H, F)])=[P_{F}]
-\left[\matrixtwo\right]\in KO_0(C_L^\ast(X)),
\end{equation*}
 where $P_F$ is
an idempotent in the matrix algebra of $C_L^\ast(X)^+$.
We call this class $\ind_L([(H, F)])$
the {\it local index of $F$}.
In fact, we choose
$P_F(t)$ to be the matrix

{\footnotesize
\begin{equation*}
\left(
\begin{array}{cc}
F(t)F^\ast(t)+(1-F(t)F^\ast(t))F(t)F^\ast(t)) & F(t)(1-F^\ast(t)F(t))+(1-F(t)F^\ast(t))
 F(t)(1-F^\ast(t)F(t))\\
(1-F^\ast(t)F(t))F^\ast(t) & (1-F^\ast(t)F(t))^2\\
\end{array}
\right).
\end{equation*}
}

\noindent
See also Willett-Yu \cite{Willett-Yu-2012}. We write $P_F$ for $P_F(t)$ for simplicity.
 For the rest of this paper, we also abbreviate
$[(H, F)]$ as $[F]$ and
$\ind_L [(H, F)]$ as $\ind_L [F]$.

\medskip\noindent
The following isomorphism is demonstrated  in Yu \cite[Theorem 3.2]{Yu-1997} in the case when $ X$  is a CW complex and for general metric space $X$
in Qiao-Roe \cite[Theorem 3.4]{Qiao-Roe-2010}.

\begin{prop}
\label{yu-localization} 
The local index map  $\ind_L\colon KO_\ast(X)\to KO_\ast(C_L^\ast(X))$ is an isomorphism.
\end{prop}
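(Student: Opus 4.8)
The plan is to exhibit $X\mapsto KO_\ast(C_L^\ast(X))$ as a locally finite homology theory on the category of locally compact, bounded geometry, metrically locally simply connected metric spaces, to check that the local index map $\ind_L$ is a natural transformation from $KO_\ast^{lf}(-)$ to it, and to verify the isomorphism on a point; the general statement then follows from the comparison principle for locally finite homology theories, exactly as in Yu \cite{Yu-1997} for CW complexes and Qiao--Roe \cite{Qiao-Roe-2010} in general. By the suspension remark preceding the statement we may assume $\ast\equiv 0\modd 8$ throughout.

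First I would dispose of the base case $X=\mathrm{pt}$. Here $\reals_L(\mathrm{pt})$ is the algebra of bounded, uniformly continuous paths $[0,\infty)\to\calk$, the propagation condition being vacuous, and a direct argument identifies $KO_\ast(C_L^\ast(\mathrm{pt}))$ with $KO_\ast(\calk)\cong KO_\ast(\mathrm{pt})$ via evaluation at $0$, under which $\ind_L$ becomes the ordinary index map; this is the first computation of \cite{Yu-1997}.

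Next come the two structural properties. (i) \emph{Strong Lipschitz homotopy invariance}: a homotopy through uniformly Lipschitz maps induces a homotopy of $\ast$-homomorphisms between localization algebras, where the requirement that the propagation tend to $0$ is precisely what allows the homotopy parameter to be absorbed as $t\to\infty$ (this is the feature that distinguishes $C_L^\ast$ from the coarsely-invariant-but-not-locally-invariant Roe algebra). (ii) \emph{Mayer--Vietoris}: for a closed decomposition $X=A\cup B$, cutting the Roe algebra into the ideals of operators supported near $A$ and near $B$, identifying these (up to $KO$-theory) with $C_L^\ast(A)$ and $C_L^\ast(B)$, and noting that their intersection is $C_L^\ast(A\cap B)$, yields a six-term exact sequence; the corresponding excision sequence in $KO^{lf}_\ast$ is intertwined with it by $\ind_L$ because the operator $F(t)$ used to define $\ind_L$ is assembled from a partition of unity that may be chosen subordinate to $\{A,B\}$. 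Together with continuity of $KO_\ast(C_L^\ast(-))$ under countable increasing unions, these suffice: for a finite CW complex one inducts on skeleta, each cell attachment expressing $X$ as the union of $X^{(k-1)}$ with a Lipschitz-contractible piece meeting it in a copy of $S^{k-1}$, so the five lemma and the point case propagate the isomorphism \cite[Theorem 3.2]{Yu-1997}; for general $X$, metric local simple connectivity lets one compute both $KO^{lf}_\ast(X)$ and $KO_\ast(C_L^\ast(X))$ as limits over the nerves of a cofinal system of locally finite open covers with Lebesgue number tending to $0$ (the localization algebra only ``sees'' arbitrarily small scales), and $\ind_L$ is compatible with the structure maps, so the isomorphism descends \cite[Theorem 3.4]{Qiao-Roe-2010}.

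The hard part is property (ii): making rigorous the $KO$-theoretic identification of the localized ideals with $C_L^\ast(A)$ and $C_L^\ast(B)$, and verifying that the Mayer--Vietoris connecting maps on the homology side and on the localization-algebra side agree. This is where bounded geometry is really needed (and, in the maximal case, the finiteness of $\|\cdot\|_{max}$ from Gong--Wang--Yu \cite[Lemma 3.4]{Gong-Wang-Yu-2008}), together with the explicit formula for the idempotent $P_F(t)$ and the freedom to adapt the partitions of unity $\phi_{n,i}$ to a prescribed cover. Once the homology-theory axioms and their compatibility with $\ind_L$ are in place, the rest is formal.
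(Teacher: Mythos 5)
The paper does not prove this proposition at all: it simply cites Yu \cite[Theorem 3.2]{Yu-1997} for CW complexes and Qiao--Roe \cite[Theorem 3.4]{Qiao-Roe-2010} for general metric spaces. Your sketch is a faithful reconstruction of exactly those cited arguments (point case via evaluation at $0$, strong Lipschitz homotopy invariance, Mayer--Vietoris, induction on skeleta, and small-scale nerve approximation for general $X$), so it is consistent with the approach the paper relies on.
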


\begin{defn}
Let $Y\subseteq X$ be compact metric spaces.
In the definitions of $C_L^\ast(Y)$ and $C_L^\ast(X)$, we choose the 
$Y$-module and $X$-module to be 
$\ell^2(Z_Y)\otimes H$ and $\ell^2(Z_X)\otimes H$ such that $Z_Y\subseteq Z_X$ are
countable dense subsets of $Y$ and $X$, respectively, and $H$ is a separable and infinite-dimensional
Hilbert space. The inclusion isometry from $\ell^2(Z_Y)\otimes H$ to $\ell^2(Z_X)\otimes H$
induces a homomorphism $i\colon C_L^\ast(Y)\to C_L^\ast(X)$.
\end{defn}

\begin{rem}
The homomorphism $i$ is not canonical. However it induces a homomorphism at the $KO$-theory
that is canonical.
\end{rem}

\noindent
Let $C_i$
be the mapping cone of $i$ given by
\begin{equation*}
C_i=\{(a,f)\colon f\in C_0([0,1), C^\ast_L(X)),  \,a\in C^\ast_L(Y),\,
  f(0)=i(a)\}.
\end{equation*}
Define the {\it relative $KO$-homology group of $(X,Y)$} to be
$KO_\ast(X, Y)\equiv KO_\ast(S^7C_i)$.

\noindent
This definition of relative $KO$-homology allows us to have a relative long exact
sequence 
\begin{equation*}
\cdots\to KO_\ast(Y)\to KO_\ast(X)\to KO_\ast(X,Y)\to\cdots.
\end{equation*}

\begin{lem}
\label{lem-roe}
Let $X$ be a compact space and let $\mathcal{K}$ be the $C^\ast$-algebra
of compact operators on a separable, infinite-dimensional Hilbert space.
Then there is an isomorphism
$C^\ast_{red}(\widetilde{X})^{\pi_1(X)}\cong C^\ast_r(\pi_1(X))\otimes\mathcal{K}$
and
$C^\ast_{max}(\widetilde{X})^{\pi_1(X)}\cong C^\ast_{max}(\pi_1(X))\otimes\mathcal{K}$.
\end{lem}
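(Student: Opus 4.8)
The plan is to exhibit the isomorphism explicitly at the level of the algebraic Roe algebras $\reals(\widetilde X)^{\pi_1(X)}$ and then pass to completions. Write $\Gamma = \pi_1(X)$, fix a finite fundamental domain $D$ for the $\Gamma$-action on $\widetilde X$, and note that the $\widetilde X$-module in play may be taken to be $\ell^2(\Gamma)\otimes \ell^2(Z_D)\otimes H$ for a countable dense subset $Z_D\subseteq D$, with $\Gamma$ acting by the left regular representation on the first factor. Since $X$ is compact, an operator $T$ on this module has \emph{automatically} finite propagation (the diameter of $\widetilde X$ is infinite but propagation is measured in the metric, and $\Gamma$-invariance forces the matrix coefficients of $T$, indexed by $\Gamma$, to be ``banded'' in the word metric only insofar as the original metric bounds it — this is where I must be slightly careful; see the last paragraph). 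The locally compact condition says each matrix coefficient $T_{\gamma,\gamma'}$ lies in $\calk(\ell^2(Z_D)\otimes H) \cong \calk$. Together with $\Gamma$-invariance ($T_{\gamma\sigma,\gamma'\sigma}=T_{\gamma,\gamma'}$ for all $\sigma$), such a $T$ is determined by the function $\gamma\mapsto T_{e,\gamma}$, which, after checking the convolution identity, identifies $\reals(\widetilde X)^\Gamma$ with the algebraic group ring (or rather a dense subalgebra of $C^\ast_r(\Gamma)$) tensored with $\calk$. This is the classical observation of Roe; I would cite \cite{Roe-1993} for the complex case and indicate that the real case is identical.

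The key steps, in order, are: (i) set up the module $\ell^2(\Gamma)\otimes\ell^2(Z_D)\otimes H$ and the identification of $\Gamma$-invariant locally compact operators with $\Gamma$-indexed matrices with entries in $\calk$ and appropriate decay; (ii) write down the map $\Phi$ sending such a $T$ to $\sum_{\gamma} \lambda_\gamma \otimes T_{e,\gamma}$, viewed inside the multiplier algebra, and check it is a $\ast$-homomorphism onto a dense subalgebra; (iii) for the reduced completion, observe that the reduced norm on $\reals(\widetilde X)^\Gamma$ is by definition the operator norm on $\ell^2(\Gamma)\otimes\ell^2(Z_D)\otimes H$, and that under $\Phi$ this is exactly the norm of $C^\ast_r(\Gamma)\otimes\calk$ represented on $\ell^2(\Gamma)\otimes(\ell^2(Z_D)\otimes H)$, so $\Phi$ extends to the claimed isomorphism; (iv) for the maximal completion, show that $\ast$-representations of $\reals(\widetilde X)^\Gamma$ on real Hilbert spaces correspond bijectively (preserving norms) to $\ast$-representations of the algebraic real group ring tensored with $\calk$ — one direction is restriction along $\Phi$, the other is extending a representation of the group ring to the Roe algebra using that $\calk$ is a nuclear ``multiplier-trivial'' piece — so the two maximal norms agree and $\Phi$ descends to $C^\ast_{max}(\widetilde X)^\Gamma\cong C^\ast_{max}(\Gamma)\otimes\calk$.

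The main obstacle I anticipate is in step (iv): matching the two maximal norms. For the reduced case everything is a concrete computation on a fixed Hilbert space, but the maximal norm is defined by a supremum over \emph{all} $\ast$-representations, and I must argue that no representation of $\reals(\widetilde X)^\Gamma$ sees more than a representation of $C^\ast_{max}(\Gamma)\otimes\calk$. The clean way is to note that $\reals(\widetilde X)^\Gamma$ is, as a $\ast$-algebra, isomorphic (via $\Phi$) to $\reals[\Gamma]\odot\calk$ sitting densely in a $C^\ast$-algebra containing a copy of $\calk$ as an ideal, so any representation restricts to a (possibly degenerate) representation of $\calk$, which decomposes as a multiple of the identity representation plus a zero part; on the complementary part the representation factors through the unitization of $\reals[\Gamma]$, hence through $C^\ast_{max}(\Gamma)$, and the norm estimate follows — this is where the bounded geometry hypothesis invoked via \cite{Gong-Wang-Yu-2008} guarantees finiteness and makes the argument go through. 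A secondary point to verify is that the propagation/local-compactness conditions really do cut $\reals(\widetilde X)^\Gamma$ down to (a completion of) $\reals[\Gamma]\otimes\calk$ and not something larger; compactness of $X$ is what makes the propagation condition vacuous after the fact, so every $\Gamma$-invariant locally compact operator automatically has the required banded form.
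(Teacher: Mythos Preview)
Your overall strategy coincides with the paper's: establish the algebraic $\ast$-isomorphism $\reals(\widetilde X)^{\Gamma}\cong \reals[\Gamma]\odot\calk$ (the paper simply cites Roe \cite[Lemma~2.3]{Roe-2002} for this, whereas you reconstruct it via a fundamental domain), and then pass to the two completions. Your step~(iii) is the standard argument, and the paper compresses your step~(iv) into the single clause ``since $\calk$ is a nuclear $C^\ast$-algebra,'' which packages exactly the representation-splitting you sketch (any $\ast$-representation of $\reals[\Gamma]\odot\calk$ is degenerate on the part where $e\otimes\calk$ acts by zero, and on the essential part it factors through $C^\ast_{max}(\Gamma)\otimes_{max}\calk=C^\ast_{max}(\Gamma)\otimes\calk$).

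There is, however, a real confusion you should correct. You claim, both in step~(i) and in the final paragraph, that compactness of $X$ makes the finite-propagation condition ``vacuous after the fact,'' so that every $\Gamma$-invariant locally compact operator automatically has banded form. This is false. Finite propagation is part of the \emph{definition} of $\reals(\widetilde X)$ and is not a consequence of $\Gamma$-invariance and local compactness; without it the family $(T_{e,\gamma})_{\gamma}$ need not be finitely supported, and you land in something like the von Neumann closure rather than in $\reals[\Gamma]\odot\calk$. What compactness of $X$ actually buys you is that the fundamental domain $D$ is bounded, so an operator of propagation $\le R$ has $T_{e,\gamma}=0$ unless $\gamma$ lies in the finite set $\{\gamma\in\Gamma: d(D,\gamma D)\le R\}$. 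That is the reason $\Phi(T)=\sum_\gamma \lambda_\gamma\otimes T_{e,\gamma}$ is a finite sum. Once you invoke finite propagation for the right reason, your outline goes through and matches the paper's proof.
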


\begin{proof}
In Roe \cite[Lemma 2.3]{Roe-2002} the
 $\ast$-isomorphism 
$(\reals\widetilde{X})^{\pi_1(X)}\cong (\reals\pi_1(X))\otimes \calk$ is proved.
This algebraic $\ast$-isomorphism extends to the required
$\ast$-isomorphism in both the reduced and maximal case, since $\calk$ is a
nuclear $C^\ast$-algebra.
\end{proof}

\begin{prop}
\label{prop-lifting}
Let $X$ be a compact metric space with universal cover $\widetilde{X}$. There is
$\varep>0$ depending only on $X$ such that, if $b$ is an operator
in $\reals(X)$ with propagation at most $\varep$,
 then $b$ lifts to a $\pi_1(X)$-invariant operator
$\widetilde{b}$ in $\reals(\widetilde{X})$.
\end{prop}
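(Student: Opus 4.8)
The plan is to exploit the fact that a finite-propagation operator on $X$ is, by definition, a ``sum of local pieces,'' and that locally the universal cover $\widetilde X$ looks exactly like $X$. Concretely, let $\varep_0>0$ be a Lebesgue-type number for $X$: since $X$ is compact and (by our standing assumption) metrically locally simply connected, there is $\varep_0>0$ such that every ball of radius $3\varep_0$ in $X$ is evenly covered by the covering map $p\colon\widetilde X\to X$, i.e.\ $p^{-1}(B_{3\varep_0}(x))$ is a disjoint union of open sets each mapped homeomorphically onto $B_{3\varep_0}(x)$. Set $\varep=\varep_0$. First I would fix a Borel section-type decomposition of $X$ into small pieces: choose a countable dense $Z_X\subseteq X$ and a measurable partition $X=\bigsqcup_k E_k$ with each $E_k$ of diameter less than $\varep_0$, contained in some evenly covered ball, and meeting $Z_X$. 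The $X$-module is $\ell^2(Z_X)\otimes H$; writing $\chi_k$ for the characteristic function of $E_k\cap Z_X$ acting on $\ell^2(Z_X)$ (tensored with $1_H$), we get a decomposition $1=\sum_k\chi_k$ into orthogonal projections, and $b=\sum_{k,l}\chi_k b\,\chi_l$, where the only nonzero terms have $d(E_k,E_l)\le\varep$ because $b$ has propagation at most $\varep$.

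Next I would lift each piece. For each point $\tilde z\in Z_{\widetilde X}:=p^{-1}(Z_X)$ lying over $z\in E_k$, there is a unique ``sheet'' $\widetilde E_k^{\,\tilde z}$ of $p^{-1}(E_k)$ containing $\tilde z$, homeomorphic to $E_k$ via $p$; let $\chi_{k}^{\tilde z}$ be the characteristic projection of $\widetilde E_k^{\,\tilde z}\cap Z_{\widetilde X}$ on $\ell^2(Z_{\widetilde X})$. When $d(E_k,E_l)\le\varep=\varep_0$, both $E_k$ and $E_l$ lie in a common evenly covered ball $B_{3\varep_0}(x)$, so a sheet $\widetilde E_k^{\,\tilde z}$ over $E_k$ determines a unique sheet over $E_l$ in the same sheet of $p^{-1}(B_{3\varep_0}(x))$; call it $\widetilde E_l^{\,\tilde z}$ (this is well defined: it is the sheet meeting the preimage of that $3\varep_0$-ball that contains $\widetilde E_k^{\,\tilde z}$). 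Using the local homeomorphisms $p$ to identify $\widetilde E_k^{\,\tilde z}\cong E_k$ and $\widetilde E_l^{\,\tilde z}\cong E_l$, transport the operator $\chi_k b\,\chi_l\colon \chi_l(\ell^2 Z_X\otimes H)\to\chi_k(\ell^2 Z_X\otimes H)$ to an operator between the corresponding sheets upstairs; summing over all $\tilde z$ (equivalently, over all sheets, i.e.\ over $\pi_1(X)$-orbits with the group acting freely transitively on sheets) defines $\widetilde b=\sum_{k,l:\,d(E_k,E_l)\le\varep}\ \sum_{\text{sheets }\tilde z}\,(\text{lift of }\chi_kb\chi_l)$. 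By construction $\widetilde b$ is $\pi_1(X)$-invariant, since the deck group permutes the sheets and the definition is symmetric under this permutation; it has propagation at most $\varep$ plus the diameters of the $E$'s, hence at most $3\varep_0$, so it is finite propagation; and it is locally compact because each summand is (conjugate to) a piece of the locally compact operator $b$. One also checks the sum converges strongly using bounded geometry (each $\tilde z$ interacts with boundedly many others). Finally $\widetilde b$ projects to $b$ under the averaging/compression map $\reals(\widetilde X)^{\pi_1(X)}\to\reals(X)$, and in fact compressing $\widetilde b$ to a single fundamental domain recovers $b$, which is the sense in which ``$b$ lifts.''

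The main obstacle is the well-definedness and coherence of the sheet-matching in the off-diagonal terms: one must verify that the choice of sheet $\widetilde E_l^{\,\tilde z}$ associated to $\widetilde E_k^{\,\tilde z}$ does not depend on which evenly covered $3\varep_0$-ball one uses to make the identification, and that the resulting $\widetilde b$ is genuinely $\pi_1(X)$-equivariant rather than just formally a sum over sheets. This is exactly where the uniform local simple connectivity of $X$ (our standing hypothesis on the spaces) and the factor of $3$ in the radius are used: simple connectivity of the relevant balls forces any two local lifts that agree at one point to agree throughout, so the matching is consistent, and the triangle inequality guarantees that whenever $\chi_k b\chi_l\ne 0$ there is such a ball containing both pieces. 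Once this compatibility is in hand, invariance, finite propagation, local compactness and strong convergence are all routine bookkeeping, and the value of $\varep$ that works is the evenly-covered radius divided by three, which depends only on $X$.
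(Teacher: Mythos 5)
Your construction is correct and is essentially the paper's own argument: the paper lifts $b$ by representing it as a $\calk$-valued kernel $k$ on $Z_X\times Z_X$ and defining $k'(x',y')=k(p(x'),p(y'))$ when $d(x',y')<\varep$ and $k'(x',y')=0$ otherwise, which is exactly your block-by-block sheet lifting written in one stroke, with $\varep$ likewise extracted from compactness together with metric local simple connectedness. The consistency-of-sheet-matching issue you single out as the main obstacle is handled even more tersely in the paper (it is absorbed into the choice of $\varep$), so there is no gap relative to the published proof.
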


\begin{proof}
In the definition of $\reals(X)$, we choose the $X$-module to be
$\ell^2(Z_X)\otimes H$ such that $Z_X$ is a countable dense subset of $X$
and $H$ is a separable and infinite-dimensional Hilbert space.
Let $p\colon \widetilde{X}\to X$ be the projection map. We define
$Z_{\widetilde{X}} = p^{-1}(Z_X)$. We choose the $\widetilde{X}$-module
to be $\ell^2(Z_{\widetilde{X}})\otimes H$ in the definition of $\reals(\widetilde{X})$.
Every operator $b\in \reals(X)$ can be represented by a kernel $k(\cdot, \cdot)$ such that
$k(x,y)$ belongs to $\mathcal{K}$ for all $(x,y)\in Z_X\times Z_X$ and $\mathrm{Supp}(k)$
is contained in $\{(x,y)\in X\times X\colon d(x,y)<r\}$ for some $r>0$.  The smallest
such $r$ is the propagation of $b$. Now let $k'(x', y')=k(p(x'), p(y'))$ for all 
$(x', y')\in Z_{\widetilde X}\times Z_{\widetilde X}$ satisfying $d(x', y')<r$
and $k'(x',y')=0$ for all $(x',y')\in Z_{\widetilde X}\times Z_{\widetilde X}$
satisfying $d(x',y')\ge r$. By the compactness of $X$, there
is $\varep>0$ such that, if $b$ has propagation at most $\varep$, then
 $k'$ represents an element $\widetilde{b}$
of $\reals(\widetilde{X})$ and $\widetilde{b}$ has the same propagation as $b$.

\bigno
This discussion shows that there exists $\varep >0$ such that,
  if $b\in \reals(X)$ and $\mathrm{prop}(b)<\varep$, then there is
a unique lifting of $b$ in  $\reals(X)$ to $\phi(b)$ in $ \reals (\widetilde{X})$. 
\end{proof}

\noindent
Note that, 
if the propagations $\mathrm{prop}(b_1), \mathrm{prop}(b_2)<\varep/2$, then
this lifting respects
multiplication and addition, i.e.~$\phi(b_1b_2)=\phi(b_1)\phi(b_2)$ and
$\phi(b_1+b_2)=\phi(b_1)+\phi(b_2)$.

\begin{defn}
Let $s\in [0,\infty)$ and let $X$ be a compact metric space.
 For all $b\in \reals_L(X)$, denote by $b_s\in \reals_L(X)$
the operator given by $b_s(t)=b(s+t)$ for all $t\in [0,\infty)$.
Let $\varep$ be as in the above proposition.
For each $b\in \reals_L(X)$, there is $s_b>0$ such that $\mathrm{prop}(b_s)<\varep$
when $s>s_b$.
 We define $\phi_s(b)=\widetilde{b}_s\in \reals_L(\widetilde{X})^{\pi_1(X)}$ 
when $s>s_b$.
\end{defn}

\noindent
The next result indicates that $\phi_s$ is an asymptotic morphism in the
following generalized sense.

\begin{lem}
\label{generalized-asymptotic-morphism}
Let $X$ be a compact metric space.
For all $b\in\reals_L(X)$, let $s_b$ be given as in the previous definition.
\begin{enumerate}
\item 
There is $C>0$ such that, for all $b\in \reals_L(X)$, if $s>s_b$, 
then 
\begin{equation*}
||\phi_s(b)||_{red}\le C||b||_{red}~~\mathrm{and}~~||\phi_s(b)||_{max}
\le C||b||_{max}.
\end{equation*} 
\item For all $b\in \reals_L(X)$, if $s>s_b$, then $\phi_s(b)^\ast=\phi_s(b^\ast)$.
\item For all $b_1, b_2\in\reals_L(X)$, the operator
\begin{equation*}
\phi_s(b_1b_2)-\phi_s(b_1)\phi_s(b_2)
\end{equation*}
is zero when 
$s>\max\{s_{b_1}, s_{b_2}, s_{b_1b_2}\}$.
\end{enumerate}
\end{lem}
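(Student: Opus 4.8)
\medskip\noindent
The plan is to reduce all three assertions to the structure of the lifting map $a\mapsto\widetilde a$ of Proposition~\ref{prop-lifting}, which sends an operator $a\in\reals(X)$ of propagation less than $\varep$ to the $\pi_1(X)$-invariant operator $\widetilde a\in\reals(\widetilde X)$ whose matrix entries over the chosen dense sets $Z_X\subseteq X$ and $Z_{\widetilde X}=p^{-1}(Z_X)$ are $\widetilde a_{x',y'}=a_{p(x'),p(y')}$ when $d(x',y')<\varep$ and $\widetilde a_{x',y'}=0$ otherwise. Since $X$ is compact we may, and do, shrink the $\varep$ of Proposition~\ref{prop-lifting} so that $p$ restricts to an isometry on every ball of radius $2\varep$ in $\widetilde X$; this costs nothing and is used only in (3). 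For $s>s_b$ we have $\mathrm{prop}(b(s+t))<\varep$ for all $t$, so $\phi_s(b)(t)=\widetilde{b(s+t)}$ is defined; from the entry formula, assertion (2) is then immediate, since $\mathrm{prop}(b^\ast(\cdot))=\mathrm{prop}(b(\cdot))$ gives $s_{b^\ast}=s_b$ and, on $\{d(x',y')<\varep\}$, $(\widetilde a)^\ast_{x',y'}=(\widetilde a_{y',x'})^\ast=(a_{p(y'),p(x')})^\ast=(a^\ast)_{p(x'),p(y')}=\widetilde{a^\ast}_{x',y'}$ (both sides vanishing off that set).

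\medskip\noindent
For (1), every entry of $\widetilde a$ satisfies $\|\widetilde a_{x',y'}\|\le\sup_{w,w'}\|a_{w,w'}\|\le\|a\|$, and $\widetilde a$ has propagation less than $\varep$. As $\widetilde X$ covers the compact space $X$ it has bounded geometry, so there is $N$ with $\#\{y'\colon d(x',y')<\varep\}\le N$ for every $x'$. For the reduced norm, $\|\widetilde{b(s+t)}\|_{red}$ is the operator norm on $\ell^2(Z_{\widetilde X})\otimes H$, and the Schur test gives $\|\widetilde{b(s+t)}\|_{red}\le N\,\|b(s+t)\|_{red}\le N\,\|b\|_{red}$. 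For the maximal norm, the bounded-geometry, finite-propagation estimate underlying \cite[Lemma 3.4]{Gong-Wang-Yu-2008} (equivalently, writing $\widetilde{b(s+t)}$ through the isomorphism of Lemma~\ref{lem-roe} as a sum of boundedly many terms $\lambda_\gamma\otimes T_\gamma$ with $\|T_\gamma\|$ controlled by $\|b(s+t)\|$) bounds $\|\psi(\widetilde{b(s+t)})\|$ by a geometry-dependent constant times $\sup_{x',y'}\|\widetilde{b(s+t)}_{x',y'}\|\le\|b\|_{max}$, uniformly over $\ast$-representations $\psi$; hence $\|\widetilde{b(s+t)}\|_{max}\le C'\,\|b\|_{max}$. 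Taking the supremum over $t$ and $C=\max(N,C')$ proves both inequalities.

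\medskip\noindent
Assertion (3) is the crux. If $s>\max\{s_{b_1},s_{b_2},s_{b_1b_2}\}$ then $b_1(s+t)$, $b_2(s+t)$ and their product all have propagation less than $\varep$, so all three lift, and one must verify the entrywise identity $\widetilde{b_1(s+t)}\,\widetilde{b_2(s+t)}=\widetilde{b_1(s+t)b_2(s+t)}$ in $\reals(\widetilde X)^{\pi_1(X)}$; set $r_i=\mathrm{prop}(b_i(s+t))<\varep$. Fix $x',y'\in Z_{\widetilde X}$. The $(x',y')$-entry of the left side is $\sum_{z'}(b_1)_{p(x'),p(z')}(b_2)_{p(z'),p(y')}$, the nonzero terms having $d(x',z')<r_1$ and $d(z',y')<r_2$, hence $d(x',y')<2\varep$. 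Using injectivity of $p$ on $2\varep$-balls, $z'\mapsto p(z')$ is a bijection from these $z'$ onto $\{w\colon d(p(x'),w)<r_1,\ d(w,p(y'))<r_2\}$: given such a $w$, lift it to the unique $z'$ within $\varep$ of $x'$; the lift of $p(y')$ near $z'$ then lies within $2\varep$ of $x'$ and must equal $y'$, forcing $d(z',y')<\varep$. Hence the entry equals $\sum_w(b_1)_{p(x'),w}(b_2)_{w,p(y')}=(b_1b_2)_{p(x'),p(y')}$. If $d(x',y')<\varep$ this is precisely the $(x',y')$-entry of $\widetilde{b_1(s+t)b_2(s+t)}$; if $d(x',y')\ge\varep$ that entry is $0$ by definition of the lift, and the left side vanishes as well — either because no $z'$ contributes (when $d(x',y')\ge2\varep$) or because $d(p(x'),p(y'))=d(x',y')\ge\varep>\mathrm{prop}(b_1b_2)$ (when $\varep\le d(x',y')<2\varep$), and it is exactly here that the hypothesis $s>s_{b_1b_2}$ is used. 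The main obstacle is precisely this "no-wraparound" bookkeeping; it is what forces $\varep$ to be small relative to the covering geometry of $X$ and explains why $s_{b_1b_2}$ must appear alongside $s_{b_1}$ and $s_{b_2}$.
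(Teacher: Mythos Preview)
Your treatment of (2) and (3) is correct and considerably more detailed than the paper's, which simply declares them ``straightforward'' (the multiplicativity underlying (3) is already asserted, without proof, in the remark following Proposition~\ref{prop-lifting}). Your entrywise bookkeeping in (3), including the case distinction at $\varep\le d(x',y')<2\varep$ that isolates where the hypothesis $s>s_{b_1b_2}$ is actually used, is exactly right.

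For (1) the paper takes a different and somewhat cleaner route: it fixes a finite open cover $\{U_i\}_{i=1}^N$ of $X$ whose stars have diameter less than $\varep$, chooses a subordinate partition of unity $\{\varphi_i\}$, writes $\phi_s(b)=\sum_{i=1}^N\phi_s(\varphi_i b)$, and observes that $\|\phi_s(\varphi_i b)\|=\|\varphi_i b\|$ in either norm, since an operator supported near a single $U_i$ lifts to a $\pi_1(X)$-orbit of disjoint copies of itself. This yields $\|\phi_s(b)\|\le N\|b\|$ in both norms at once.

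Your Schur-test argument for the reduced norm has a gap: the set $Z_{\widetilde X}=p^{-1}(Z_X)$ over which you index matrix entries is \emph{dense} in $\widetilde X$, so the assertion ``there is $N$ with $\#\{y':d(x',y')<\varep\}\le N$'' is false --- bounded geometry of $\widetilde X$ provides \emph{some} discrete net with controlled ball-counts, not a bound for a dense subset. Hence the row/column counting does not apply as written. Your parenthetical alternative via Lemma~\ref{lem-roe}, however, is correct and suffices for \emph{both} norms: writing $\widetilde{b(s+t)}$ as $\sum_\gamma\lambda_\gamma\otimes T_\gamma$, the sum ranges over the finitely many $\gamma\in\pi_1(X)$ moving a fixed fundamental domain within distance $\varep$ of itself, each $\|T_\gamma\|\le\|b(s+t)\|$, and $\|\lambda_\gamma\|=1$ in both $C^\ast_r(\pi_1(X))$ and $C^\ast_{max}(\pi_1(X))$. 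So the fix is simply to discard the Schur-test sentence and invoke this decomposition uniformly; with that change your argument for (1) is complete.
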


\begin{proof}
Let $\{U_i\}_{i=1}^N$ be a finite open cover of $X$ such that, for each $i$, the diameter of the union
of all $U_j$ satisfying $U_j\cap U_i\ne\emptyset$ is less than $\varep$, where
$\varep$ is as in Proposition \ref{prop-lifting}. Let $\{\varphi_i\}_i$ be the continuous
partition of unity subordinate to $\{U_i\}$. We have $\phi_s(b)=\sum_{i=1}^N \phi_s(\varphi_ib)$.
By the definition of $\phi_s$ and the choice of $\phi_i$, we have $||\phi_s(\varphi_ib)||=||\varphi_ib||
\le ||b||$. It follows that $||\phi_s(b)||\le N||b||$ if $s>s_b$. This proves (1). The proofs of (2) and (3) are
straightforward.
\end{proof}

\bigskip\noindent
There is a pushdown
 $\reals_L(\widetilde{X})^{\pi_1(X)}
\to\reals_L(X)$ for operators with small propagation. Such a pushdown induces homomorphisms
\begin{equation*}
 KO_\ast(C_{L, max}^\ast(\widetilde{X})^{\pi_1(X)})\to 
KO_\ast(C^\ast_L(X))
\end{equation*}
 and
 \begin{equation*}
 KO_\ast(C_{L, red}^\ast(\widetilde{X})^{\pi_1(X)})\to 
KO_\ast(C^\ast_L(X)),
\end{equation*}
which are inverses to the homomorphisms induced by the liftings.
This lemma implies that the liftings $\phi_s$ induce isomorphisms
$KO_\ast(C^\ast_L(X))\to KO_\ast(C_{L, max}^\ast(\widetilde{X})^{\pi_1(X)})$ and
$KO_\ast(C^\ast_L(X))\to KO_\ast(C_{L, red}^\ast(\widetilde{X})^{\pi_1(X)})$.

\begin{defn}
Let $j_\ast\colon \pi_1(Y)\to \pi_1(X)$ be the homomorphism induced by the inclusion $Y\to X$.
Then $j_\ast$ induces a unique map $\eta\colon \widetilde{Y}\to \widetilde{X}$ such that 
$\eta(gy)=i_\ast(g)\eta(y)$ for all $g\in \pi_1(Y)$ and $y\in \widetilde{Y}$. 
\end{defn}

\noindent
Note that such $\eta$ exists because $X$ and $Y$ are metrically locally simply connected.

\bigskip\noindent
Let $p$ be the covering map $\widetilde{X}\to X$ and let $Y'=p^{-1}(Y)$. 
 Let $p'\colon \widetilde{Y}\to Y$
be the covering map from the universal cover $\widetilde{Y}$.
Let $Y''$ be the Galois coveirng
of $Y$ whose deck transformation group is $j_\ast\pi_1(Y)$, and let $p''\colon Y''\to Y$ be the
covering map. 

\bigskip\noindent
We have $Y'=\pi_1(X)\times_{j_\ast\pi_1(Y)}Y''$. This decomposition gives rise to a natural
$\ast$-homomorphism $$\psi'\colon C^\ast_{max}(Y'')^{j_\ast\pi_1(Y)}\to  C_{max}^\ast(Y')^{\pi_1(X)}.$$
Choose countable dense subsets $Z_Y$ of $Y$ and $Z_X$ of $X$ such that $Z_Y\subseteq Z_X$.
Let $H$ be a separable and infinite-dimensional
Hilbert space.
We use the modules $\ell^2(p^{-1}(Z_Y))\otimes H$, $\ell^2(p^{-1}(Z_X))\otimes H$, 
$\ell^2((p')^{-1}(Z_Y))\otimes H$ and $\ell^2((p'')^{-1}(Z_Y)\otimes H$, respectively, to define
$C_{max}^\ast(Y')^{\pi_1(X)}$, $C_{max}^\ast(\widetilde{X})^{\pi_1(X)}$, $C_{max}^\ast(\widetilde{Y})^{\pi_1(Y)}$
and $C_{max}^\ast(Y'')^{j_\ast\pi_1(Y)}$.

\begin{lem}
\label{big-lemma}
There exists a $\ast$-homomorphism $$\psi''\colon C_{max}^\ast(\widetilde{Y})^{\pi_1(Y)}\to C_{max}^\ast(Y'')^{j_\ast\pi_1(Y)}$$
such that there is $\varep>0$ for which, if $k\in C^\ast(\widetilde{Y})^{\pi_1(Y)}$ is an operator
with propagation at most $\varep$ and is represented as a kernel $k$ on $(p')^{-1}(Z_Y)$ with values in 
$\calk$, then there is a unique kernel  $k_Y$  on $Z_Y$  with values in 
$\calk$ such that
$k(x,y)=k_Y(p(x),p(y))$ for all $x,y\in p^{-1}(Z_Y)$ satisfying  $d(x,y)\le \varep$
and  $\psi''(k)$ is represented by a kernel $k''$ on $(p'')^{-1}(Z_Y)$ with
values in $\calk$ such that $k''(x,y)
=k_Y(p''(x), p''(y))$ for all $x,y\in (p'')^{-1}(Z_Y)$ satisfying $d(x,y)\le \varep$.
\end{lem}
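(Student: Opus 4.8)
The plan is to obtain $\psi''$ from the canonical surjection $q\colon\pi_1(Y)\twoheadrightarrow j_\ast\pi_1(Y)=\pi_1(Y)/\ker j_\ast$, and then to read off the kernel description by unwinding Roe's identification of equivariant Roe algebras with group $C^\ast$-algebras. First I would note that Lemma~\ref{lem-roe}, whose proof applies verbatim to any free, proper, cocompact action of a discrete group, yields $\ast$-isomorphisms $C^\ast_{max}(\widetilde Y)^{\pi_1(Y)}\cong C^\ast_{max}(\pi_1(Y))\otimes\calk$ and $C^\ast_{max}(Y'')^{j_\ast\pi_1(Y)}\cong C^\ast_{max}(j_\ast\pi_1(Y))\otimes\calk$; these are compatible once one fixes a fundamental domain $F''\subseteq Y''$ for $j_\ast\pi_1(Y)$ together with its homeomorphic lift $\widetilde F\subseteq\widetilde Y$ along the intermediate covering $\rho\colon\widetilde Y\to Y''$ (so that $\rho(g\widetilde F)=q(g)F''$ and the dense sets in $\widetilde F$ and $F''$ are both identified with $Z_Y$). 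Since the maximal group $C^\ast$-algebra is functorial for group homomorphisms, $q$ induces $q_\ast\colon C^\ast_{max}(\pi_1(Y))\to C^\ast_{max}(j_\ast\pi_1(Y))$, and I would simply set $\psi'':=q_\ast\otimes\mathrm{id}_\calk$, transported through the two isomorphisms. It is essential here that we use the maximal and not the reduced completions, since the analogous reduced statement fails in general. On the dense $\ast$-subalgebra this $\psi''$ sends an operator $\sum_g\lambda_g\otimes T_g$ — a finite sum, $g$ ranging over the group elements whose tile lies within an $\varep$-neighbourhood of $\widetilde F$ — to $\sum_\gamma\lambda_\gamma\otimes\big(\sum_{q(g)=\gamma}T_g\big)$.

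For the kernel-level statement I would take $\varep>0$ so small that $2\varep$ is less than the systole $\inf\{\,d(x,gx):x\in\widetilde Y,\ 1\ne g\in\pi_1(Y)\,\}$, which is positive since the covering $\widetilde Y\to Y$ of the compact, metrically locally simply connected space $Y$ is uniformly locally injective; this is the constant furnished by Proposition~\ref{prop-lifting} applied to $Y$. For $k\in\reals(\widetilde Y)^{\pi_1(Y)}$ with $\mathrm{prop}(k)\le\varep$, the $\pi_1(Y)$-invariance of $k$ together with the systole bound makes the pushdown of $k$ along $p'$ well defined, exactly as in the pushdown map $\reals_L(\widetilde Y)^{\pi_1(Y)}\to\reals_L(Y)$ discussed after Lemma~\ref{generalized-asymptotic-morphism}: for $z,z'\in Z_Y$ with $d(z,z')\le\varep$ one lifts a short path to get $x,x'\in(p')^{-1}(Z_Y)$ with $d(x,x')\le\varep$ and sets $k_Y(z,z')=k(x,x')$, which is independent of the lift by the systole bound; one sets $k_Y=0$ off the $\varep$-neighbourhood of the diagonal. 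Then I would define $k''$ on $(p'')^{-1}(Z_Y)$ by $k''(u,u')=k_Y(p''(u),p''(u'))$ for $d(u,u')\le\varep$ and $k''=0$ otherwise; this is visibly a $j_\ast\pi_1(Y)$-invariant kernel supported near the diagonal (equivalently, the pushdown of $k$ along $\rho$), and the task reduces to showing that the operator it represents is $\psi''(k)$.

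This identification is the crux, and where the systole bound does its real work. Decomposing $k=\sum_g\lambda_g\otimes T_g$ as above, the operator with kernel $k''$ has ``$\gamma$-block'' $T''_\gamma$ recording $k''$ on pairs with source in $F''$ and target in $\gamma F''$, whereas $\psi''(k)$ has ``$\gamma$-block'' $\sum_{q(g)=\gamma}T_g$. The point is that for a fixed pair of fundamental-domain coordinates at most one $g\in q^{-1}(\gamma)$ contributes: if points of $g_1\widetilde F$ and of $g_2\widetilde F$ both lie within $\varep$ of a common point of $\widetilde F$, the two points are $2\varep$-close and differ by the element $g_2g_1^{-1}\in\ker j_\ast$, so the systole bound forces $g_1=g_2$. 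Hence $T''_\gamma=\sum_{q(g)=\gamma}T_g$, the operator with kernel $k''$ is $\psi''(k)$, and in particular it is bounded, being an element of the $C^\ast$-algebra $C^\ast_{max}(Y'')^{j_\ast\pi_1(Y)}$. I expect the main obstacle to be exactly this last bookkeeping — matching the two block decompositions through the fundamental-domain identification and tracking which $g\in q^{-1}(\gamma)$ is relevant for each pair — rather than the comparatively formal construction of $\psi''$ in the first step.
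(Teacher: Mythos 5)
Your proposal is correct and arrives at the same map as the paper, but you package the construction of $\psi''$ differently. The paper works directly on kernels: it sets $k_a(x,y)=\sum_{h\in H}k(hx,y)$ with $H=\ker j_\ast$ (a finite sum by finite propagation), observes that $k_a$ is $H\times H$-invariant and hence descends to a kernel $k''$ on $(p'')^{-1}(Z_Y)=(p')^{-1}(Z_Y)/H$, and extends the resulting $\ast$-homomorphism to the completions by maximality of the norm. You instead pass through Lemma \ref{lem-roe} (extended from universal covers to the intermediate Galois cover $Y''$) to identify both algebras with group $C^\ast$-algebras tensored with $\calk$, and define $\psi''$ as $q_\ast\otimes\mathrm{id}_\calk$ for the quotient $q\colon\pi_1(Y)\to j_\ast\pi_1(Y)$; unwinding the identification, $q_\ast\otimes\mathrm{id}$ is exactly the paper's $H$-averaging, so the two maps coincide on the dense subalgebra and hence everywhere. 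What your route buys is that boundedness and extension to the completion come for free from functoriality of $C^\ast_{max}$ rather than from an appeal to the maximal norm, and the algebraic content of $\psi''$ is transparent; what it costs is the extra step of justifying Lemma \ref{lem-roe} for the non-universal cover $Y''$ and the compatibility of fundamental domains, which the paper's direct construction avoids. The decisive $\varep$-argument is identical in both: choosing $\varep$ small against the displacement $\inf_{h\ne e}d(hx,x)$ forces only the $h=e$ term to survive when $d(x,y)\le\varep$ (the paper's $d(hx,x)>10\varep$ is your systole bound), which is precisely your observation that at most one $g\in q^{-1}(\gamma)$ contributes to each block.
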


\begin{proof}
Let $H$ be the kernel of the homomorphism $j_\ast\colon \pi_1(Y)\to \pi_1(X)$. Let $k$
be an operator in $\reals(\widetilde{Y})^{\pi_1(Y)}$ represented by a kernel $k(x,y)$
on $(p')^{-1}(Z_Y)$. We define a kernel $k_a(x,y)$ on $(p')^{-1}(Z_Y)$
 by the formula $k_a(x,y)=\sum_{h\in H}k(hx,y)$ for all $x,y\in (p')^{-1}(Z_Y)$.
Note that the above sum is finite since $k$ has finite propagation. We have $k_a(h_1x, h_2y)=k_a(x,y)$
for all $h_1, h_2\in H$ and $x,y\in (p')^{-1}(Z_Y)$. For each $x,y\in (p')^{-1}(Z_Y)$, let
$[x], [y]$ be the corresponding pair of equivalence classes in $(p'')^{-1}(Z_Y)=(p')^{-1}(Z_Y)/H$. We let
$k''([x], [y])=k_a(x,y)$. Note that $k''$ is well-defined. We now define a $\ast$-homomorphism
$\psi''\colon \reals(\widetilde{Y})^{\pi_1(Y)}\to \reals(Y'')^{j_\ast\pi_1(Y)}$ given by $\psi''(k)=k''$. 
By maximality, this map $\psi''$ extends to a $\ast$-homomorphism $C_{max}^\ast(\widetilde{Y})^{\pi_1(Y)}
\to C_{max}^\ast(Y'')^{j_\ast\pi_1(Y)}$. We choose $\varep>0$ small enough such that $d(hx,x)>10\varep$ for all
$h\ne e$ in $H$ and all $x\in\widetilde{Y}$. If $d([x], [y])>\varep$, then $d(hx,y)>\varep$ for all $h\in H$. 
Therefore if $k$ has propagation at most $\varep$, then $k''$ has propagation at most $\varep$. If $\varep$
is small enough, there is a unique kernel $k_Y$ on $Z_Y$ such that $k(x,y)=k_Y(p(x), p(y))$ for all $x,y\in 
p^{-1}(Z_Y)$ satisfying $d(x,y)\le\varep$ and $k_Y$ has propagation at most $\varep$. If $d(x,y)\le \varep$,
then $d(hx,y)<\varep$ for all $h\ne e$ in $H$ and $x,y\in\widetilde{Y}$. Therefore $k_a(x,y)=k(x,y)$
if $d(x,y)\le\varep$. It follows that $k''(x,y)=k_Y(p''(x), p''(y))$ for all $x,y\in (p'')^{-1}(Z_Y)$ satisfying
$d(x,y)\le \varep$. 
\end{proof}

\bigskip\noindent
Let $\psi''$ be as in Lemma \ref{big-lemma} above and let $\psi'$ be as previously defined.
We now define a $\ast$-homomorphism 

\begin{equation}
\label{psi-max-formula}
\psi_{max}=\psi'\circ\psi''\colon C_{max}^\ast(\widetilde{Y})^{\pi_1(Y)}
\to C_{max}^\ast(\widetilde{X})^{\pi_1(X)}.
\end{equation}
This homomorphism  in turn induces a $\ast$-homomorphism
$$\psi_{L, max}: \quad C^\ast_{L, max}(\widetilde{Y})^{\pi_1(Y)} \rightarrow C^\ast_{L, max}(\widetilde{X})^{\pi_1(X)}.$$
Let $C_{\psi_{L, max}}$ be the mapping cone of $\psi_{L, max}$ given
by $$C_{\psi_{L, max}}
=\{(a,f)\colon f\in C_0([0,1), C^\ast_{L, max}(\widetilde{X})^{\pi_1(X)}), ~a\in
C^\ast_{L, max}(\widetilde{Y})^{\pi_1(Y)}, 	
  f(0)=\psi_{L, max}(a)\}.$$
Recall that $i\colon C_L^\ast(Y)\to C_L^\ast(X)$ is the homomorphism induced
by the inclusion $Y\to X$,
and $C_i$ is its mapping cone.
For each $(b,f)\in C_i$ satisfying $\mathrm{prop}(b)<\infty$ and $\mathrm{prop}{f(t)}<\infty$
for all $t\in[0,1]$, there is $s_{(b,f)}>0$ such that $\mathrm{prop}(b_s)<\varep$ and
$\mathrm{prop}(f(t))<\varep$ for all $s>s_{(b,f)}$.
We define 
\begin{equation*}
\chi_{s, max}(b,f)=(\phi_s(b_s), \phi_s(f(\cdot)_s))\in C_{\psi_{L, max}}
\end{equation*}
 for all $s>s_{(b,f)}$, where $\phi_s$ is as in  Lemma \ref{generalized-asymptotic-morphism}. 
By the same lemma, we then know that
$\chi_{s, max}$ induces a homomorphism 
\begin{equation*}
(\chi_{s, max})_\ast\colon 
KO_\ast(S^7C_i)\to KO_\ast(S^7C_{\psi_{L, max}}).
\end{equation*}

\medskip\noindent
Let  $e$ be the evaluation homomorphism induced by the evaluation maps at $0$ from
$C^\ast_{L, max}(\widetilde{X})^{\pi_1(X)}$ to $C^\ast_{ max}(\widetilde{X})^{\pi_1(X)}$
and from $C^\ast_{L, max}(\widetilde{Y})^{\pi_1(Y)}$ to  $C^\ast_{ max}(\widetilde{Y})^{\pi_1(Y)}$.
This homomorphism induces a map $e_\ast\colon KO_\ast(S^7C_{\psi_{L,max}})\to
KO_\ast(S^7C_{\psi_{ max}})$ at the level of $KO$-theory.

\medno
Define $\mu_{max}$ to be the composition given by
\begin{equation*}
KO_\ast(S^7C_i)
\xrightarrow{(\chi_{s, max})_\ast} KO_\ast(S^7C_{\psi_{L, max}})\xrightarrow{e_\ast}
KO_\ast(S^7C_{\psi_{max}}).
\end{equation*}
Equivalently $\mu_{max}$ is a map
\begin{equation*}
\mu_{max}\colon
 KO_\ast(X,Y)
\to
KO_\ast(C^\ast_{max}(\pi_1(X), \pi_1(Y)))
\end{equation*}
 which we
call the {\it maximal relative Baum-Connes map.}
%
%At times we will notate this latter
%group by \linebreak $KO_\ast(C^\ast_{max}(\widetilde{X}, \pi_1(X); \widetilde{Y}, \pi_1(Y)))$
%to emphasize the fact that it was constructed from $X$ and $Y$, although ultimately
%it only depends on the fundamental groups and the homomorphism $j\colon \pi_1(Y)\to \pi_1(X)$.
A {\it reduced relative Baum-Connes map}
\begin{equation*}
\mu_{red}\colon KO_\ast(X,Y)\to  KO_\ast(
C^\ast_{red}(\pi_1(X), \pi_1(Y)))
\end{equation*}
can be similarly constructed if the homomorphism $j$ from $\pi_1(Y)$ to $\pi_1(X)$ is injective.
%zzz \end{defn}

\begin{rem}
\label{remark-lifting}
 By homotopy invariance, both $\mu_{max}$
and $\mu_{red}$ are independent of the choices of the liftings.
\end{rem}

\begin{conj}
Let $Y\subseteq X$ and suppose that $X$ and $Y$ are both aspherical compact spaces.
\begin{enumerate}
\item (Relative Novikov conjecture)
The maximal relative Baum-Connes map
\begin{equation*}
\mu_{max}\colon KO_\ast(X,Y)\to  KO_\ast(
C^\ast_{max}(\pi_1(X), \pi_1(Y)))
\end{equation*} is an injection.
\item (Relative Baum-Connes conjecture)
If $j\colon \pi_1(Y)\to \pi_1(X)$ is an injection,
then the reduced relative Baum-Connes map
\begin{equation*}
\mu_{red}\colon KO_\ast(X,Y)\to  KO_\ast(
C^\ast_{red}(\pi_1(X), \pi_1(Y)))
\end{equation*}
 is an isomorphism.
\end{enumerate}
\end{conj}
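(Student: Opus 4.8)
\medskip\noindent
The plan is to realize $\mu_{max}$ --- and, when $j$ is injective, $\mu_{red}$ --- as the middle vertical arrow of a commuting ladder between two mapping-cone long exact sequences, and then to deduce each part of the conjecture from the corresponding \emph{absolute} statement for $\pi_1(X)$ and $\pi_1(Y)$ by the five lemma. Since $X$ and $Y$ are compact, aspherical, and of bounded geometry, they are finite-dimensional, so $\pi_1(X)$ and $\pi_1(Y)$ have finite cohomological dimension and are in particular torsion-free; hence $X\simeq B\pi_1(X)$ and $Y\simeq B\pi_1(Y)$. Under the identification of Lemma~\ref{lem-roe}, together with Proposition~\ref{yu-localization} and the localization-algebra picture of the assembly map (Yu \cite{Yu-1997}, Willett--Yu \cite{Willett-Yu-2012}), the composite of $\ind_L$ with the liftings $\phi_s$ and with the evaluation $e$ is precisely the maximal Baum--Connes assembly map $\mu^{\pi_1(X)}_{max}\colon KO_\ast(B\pi_1(X))\to KO_\ast(C^\ast_{max}(\pi_1(X)))$, and likewise for $Y$.

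First I would assemble the ladder. On one side, $KO_\ast(X,Y)=KO_\ast(S^7C_i)$ lies in the relative long exact sequence $\cdots\to KO_\ast(Y)\to KO_\ast(X)\to KO_\ast(X,Y)\to\cdots$ recorded above; on the other, $KO_\ast(C^\ast_{max}(\pi_1(X),\pi_1(Y)))=KO_\ast(S^7C_{\phi,max})$ lies in the mapping-cone long exact sequence of $\phi$ relating $KO_\ast(C^\ast_{max}(\pi_1(Y)))$, $KO_\ast(C^\ast_{max}(\pi_1(X)))$ and $KO_\ast(C^\ast_{max}(\pi_1(X),\pi_1(Y)))$. By construction $\mu_{max}=e_\ast\circ(\chi_{s,max})_\ast$ is induced by the map of mapping cones $C_i\to C_{\psi_{L,max}}$ obtained by applying the asymptotic morphisms $\phi_s$ componentwise, followed by the evaluation $C_{\psi_{L,max}}\to C_{\psi_{max}}$; since these respect the mapping-cone structure, they intertwine the two long exact sequences, \emph{including} the connecting homomorphisms, and on the ``$C^\ast_L(Y)$'' and ``$C^\ast_L(X)$'' ends they restrict to $\mu^{\pi_1(Y)}_{max}$ and $\mu^{\pi_1(X)}_{max}$. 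Verifying this compatibility --- matching the rather ad hoc family $\phi_s$, $\chi_{s,max}$ with the classical assembly maps, and checking that the boundary maps correspond --- is the first substantial task; granting it, the commuting ladder follows, and the same construction carried out with reduced completions (possible once $j$ is injective; see below) gives the analogous ladder for $\mu_{red}$.

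For part (2), injectivity of $j$ forces the subgroup $H$ of Lemma~\ref{big-lemma} to be trivial, so $Y''=\widetilde Y$, the map $\psi''$ is the identity, $\psi_{max}=\psi'$, and the reduced completions are compatible with every $\ast$-homomorphism occurring in the construction; since the reduced lifting isomorphism $KO_\ast(C^\ast_L(X))\cong KO_\ast(C^\ast_{L,red}(\widetilde X)^{\pi_1(X)})$ is already in hand, the ladder for $\mu_{red}$ has as its outer arrows the reduced assembly maps $\mu^{\pi_1(X)}_{red}$ and $\mu^{\pi_1(Y)}_{red}$. Granting the absolute Baum--Connes conjecture for $\pi_1(X)$ and $\pi_1(Y)$, these are isomorphisms in all degrees, and the five lemma yields that $\mu_{red}$ is an isomorphism. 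Running the analogous chase on the maximal ladder shows that $\mu_{max}$ is injective once $\mu^{\pi_1(X)}_{max}$ is injective \emph{and} $\mu^{\pi_1(Y)}_{max}$ is an isomorphism; injectivity of $\mu^{\pi_1(X)}_{max}$ follows from the strong Novikov conjecture for $\pi_1(X)$ via the quotient map $C^\ast_{max}(\pi_1(X))\to C^\ast_{red}(\pi_1(X))$.

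The hard part will be twofold. Technically, the first step --- showing that the asymptotic-morphism recipe defining $\mu_{max}$ genuinely organizes into a morphism of mapping-cone long exact sequences restricting to the classical assembly maps on the absolute summands, with matching connecting homomorphisms --- requires a careful unwinding of the constructions of this section. Conceptually, part (1) as stated is \emph{not} a formal consequence of the strong Novikov conjecture for $\pi_1(X)$ and $\pi_1(Y)$ alone: a pure five-lemma argument for injectivity of the middle arrow of the ladder also needs a surjectivity input --- in effect the full Baum--Connes conjecture --- for $\pi_1(Y)$. Thus the relative Novikov conjecture is genuinely stronger than the conjunction of its absolute counterparts, and removing that extra hypothesis would require a new ingredient, for instance a relative Dirac--dual-Dirac construction or a direct geometric analysis of the classes in $KO_\ast(X,Y)$.
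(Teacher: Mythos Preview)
The statement you are addressing is a \emph{conjecture}, not a theorem: the paper does not prove it and offers no argument beyond the subsequent Remark (which observes that part~(2) follows from the absolute Baum--Connes conjecture for $\pi_1(X)$ and $\pi_1(Y)$) and Theorem~\ref{amenable} (which verifies both parts under the extra hypotheses of $K$-amenability and Baum--Connes for the two groups, via exactly the five-lemma ladder you describe). Your write-up is therefore not a proof of the conjecture but a conditional reduction to the absolute case --- and, to your credit, you say as much in your final paragraph.

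Concretely: your argument for part~(2) assumes the absolute Baum--Connes conjecture for $\pi_1(X)$ and $\pi_1(Y)$, which is not among the hypotheses of the statement; with that extra assumption your ladder-plus-five-lemma argument is correct and is precisely what the paper records in the Remark and carries out in Theorem~\ref{amenable}. Your argument for part~(1) assumes the strong Novikov conjecture for $\pi_1(X)$ \emph{and} bijectivity of $\mu_{max}^{\pi_1(Y)}$, and you correctly observe that the five lemma cannot yield injectivity of the middle map from injectivity of the outer maps alone. So what you have produced is a sound explanation of why the relative conjectures are reasonable extrapolations of the absolute ones and how they reduce to them, together with the honest admission that part~(1) is genuinely stronger than the conjunction of the absolute Novikov conjectures. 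That is the correct assessment of the situation, but it is not --- and cannot be, absent new ideas --- a proof of the conjecture as stated.
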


\begin{rem}
If the classic Baum-Connes conjecture holds for $\pi_1(X)$ and $\pi_1(Y)$, then
statement (2) is true for the pair $(\pi_1(X), \pi_1(Y))$. In general the maximal
relative Baum-Connes conjecture may not be an isomorphism.
The real version ($KO$)
of the Baum-Connes
conjecture follows from the classic (complex version) of the Baum-Connes conjecture
(see Baum-Karoubi \cite{Baum-Karoubi-2004}). After inverting 2, even the injectivity
of the complex Baum-Connes map implies the injectivity of the real Baum-Connes map (see
Schick \cite[Corollary 2.13]{Schick-2004}).
\end{rem}

\noindent
Recall that the notion of $K$-amenability was formulated by Cuntz \cite[Definition 2.2]{Cuntz-1983}.
This notion can be extended to the $KO$-setting.

\begin{thm}
\label{amenable}
Suppose that $Y\subseteq X$ are aspherical compact spaces such that $\pi_1(Y)$ and
$\pi_1(X)$ are $K$-amenable and satisfy the Baum-Connes conjecture.
\begin{enumerate}
\item Then $\mu_{max}$ is an isomorphism.
\item Assume also that $\pi_1(Y)\to \pi_1(X)$
is an injection.  Then $\mu_{red}$ is an isomorphism.
\end{enumerate}
\end{thm}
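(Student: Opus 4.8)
The plan is to identify $\mu_{max}$ with the map on relative $KO$-homology induced by a morphism of mapping-cone long exact sequences whose components on the absolute terms are the ordinary maximal Baum-Connes assembly maps, and then to conclude by the Five Lemma; part (2) will follow by the same argument with reduced completions in place of maximal ones.

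First I would set up the ladder. Both ends of $\mu_{max}$ are $K$-theories of mapping cones: $KO_\ast(X,Y)=KO_\ast(S^7C_i)$ for the cone $C_i$ of $i\colon C^\ast_L(Y)\to C^\ast_L(X)$, and $KO_\ast(C^\ast_{max}(\pi_1(X),\pi_1(Y)))=KO_\ast(S^7C_{\phi,max})$ for the cone of $\phi\colon C^\ast_{max}(\pi_1(Y))\to C^\ast_{max}(\pi_1(X))$. By construction $\mu_{max}=e_\ast\circ(\chi_{s,max})_\ast$ respects the cone structures, carrying $C_i$ to the cone of $\psi_{L,max}$ and then to the cone of $\psi_{max}$. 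Applying Lemma \ref{lem-roe} to $X$ and to $Y$ (and to the Galois cover $Y''$, for which the same argument yields $C^\ast_{max}(Y'')^{j_\ast\pi_1(Y)}\cong C^\ast_{max}(j_\ast\pi_1(Y))\otimes\calk$) and unwinding $\psi_{max}=\psi'\circ\psi''$, I would check that on $KO$-theory $\psi''$ is the map induced by the quotient $\pi_1(Y)\twoheadrightarrow j_\ast\pi_1(Y)$ --- the averaging over $H=\ker j_\ast$ in Lemma \ref{big-lemma} being precisely the functorial pushforward on group $C^\ast$-algebras --- while $\psi'$ is induced by the inclusion $j_\ast\pi_1(Y)\hookrightarrow\pi_1(X)$ followed by the $KO$-equivalence $C^\ast_{max}(Y')^{\pi_1(X)}\to C^\ast_{max}(\widetilde X)^{\pi_1(X)}$ of the inclusion $Y'\hookrightarrow\widetilde X$, so that $\psi_{max}$ corresponds to $\phi\otimes\mathrm{id}_\calk$. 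Combining this with Proposition \ref{yu-localization}, the lifting/pushdown isomorphisms $KO_\ast(C^\ast_L(Z))\cong KO_\ast(C^\ast_{L,max}(\widetilde Z)^{\pi_1(Z)})$ from the discussion after Lemma \ref{generalized-asymptotic-morphism}, and the fact that for aspherical $X$, $Y$ the universal covers model the classifying spaces while $e_\ast\circ\ind_L$ is the localization-algebra form of assembly (cf.\ Yu \cite{Yu-1997}), I obtain a commuting ladder from
\begin{equation*}
\cdots\to KO_\ast(Y)\to KO_\ast(X)\to KO_\ast(X,Y)\to\cdots
\end{equation*}
to the mapping-cone long exact sequence of $\phi$, in which the absolute vertical maps are the maximal assembly maps $\mu_{max}^{X}$ and $\mu_{max}^{Y}$. (The seventh suspensions only shift degrees, by $8$-periodicity of $KO$-theory.)

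Then I would invoke the hypotheses. Since $\pi_1(X)$ satisfies the Baum-Connes conjecture, the reduced assembly map $KO_\ast(X)\to KO_\ast(C^\ast_r(\pi_1(X)))$ is an isomorphism (the $KO$-version following from the complex one, as in the Remark after the Conjecture); since $\pi_1(X)$ is $K$-amenable, the canonical surjection $C^\ast_{max}(\pi_1(X))\to C^\ast_r(\pi_1(X))$ is a $KO$-equivalence; and as the reduced assembly map factors as $\mu_{max}^{X}$ followed by that surjection, $\mu_{max}^{X}$ is an isomorphism --- and likewise $\mu_{max}^{Y}$. In the ladder above, the two vertical maps flanking each occurrence of $KO_\ast(X,Y)$ are then isomorphisms, so the Five Lemma gives (1). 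For (2) I would assume $j_\ast$ injective --- so that $\mu_{red}$ and $C^\ast_{red}(\pi_1(X),\pi_1(Y))$ are defined and $\ker j_\ast$ is trivial, making $\psi''$ essentially the identity and $\psi'$ the subgroup-inclusion map --- and rerun the construction with reduced completions throughout; the absolute vertical maps then become the reduced assembly maps of $\pi_1(Y)$ and $\pi_1(X)$, which are isomorphisms by the Baum-Connes hypothesis, and the Five Lemma again finishes the proof.

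The main obstacle is the construction of the ladder: showing that the relative Baum-Connes map, assembled from the generalized asymptotic morphisms $\chi_{s,max}$, the evaluation $e$, and the maps $\psi'$ and $\psi''$, is genuinely compatible on $KO$-theory with the absolute assembly maps and with the mapping-cone long exact sequences on both sides, and in particular that the identification $C_{\psi_{max}}\simeq C_{\phi,max}\otimes\calk$ afforded by Lemma \ref{lem-roe} is carried out compatibly for $X$ and for $Y$. Once that naturality is in hand, the localization-algebra description of assembly, $K$-amenability, the Baum-Connes conjecture, and the Five Lemma enter only as black boxes.
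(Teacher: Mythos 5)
Your proposal is correct and follows essentially the same route as the paper: the paper also reduces the theorem to a commutative ladder between the long exact sequence of the pair $(X,Y)$ and the mapping-cone long exact sequence of $\phi$, uses $K$-amenability plus the Baum--Connes hypothesis to see that the absolute assembly maps for $\pi_1(X)$ and $\pi_1(Y)$ are isomorphisms, and concludes by the five lemma. Your discussion of why the ladder commutes (via Lemma \ref{lem-roe}, the identification of $\psi_{max}$ with $\phi\otimes\mathrm{id}_{\mathcal K}$, and the naturality of $\chi_{s,max}$ and $e$) is more detailed than the paper, which simply asserts the commutativity of the diagram.
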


\begin{proof}
By the definition of $K$-amenability,
the natural homomorphisms \linebreak
$C^*_{max}(\pi_1 (X)) \rightarrow C^*_{r}(\pi_1 (X)) $ and $C^*_{max}(\pi_1 (Y))
\rightarrow C^*_{r}(\pi_1 (Y))$
induce $KK$-equivalences.
If $\pi_1(X)$ and $\pi_1(Y)$ are $K$-amenable and satisfy the
Baum-Connes conjecture, and if $\pi_1(Y)$ injects
into $\pi_1(X)$,
then the $KO$-theory of the reduced relative group $C^*$-algebra coincides
with the $KO$-theory of the maximal
relative group $C^*$-algebra.

\medskip\noindent
The theorem is proved from the following commutative diagram and
the five-lemma.

\begin{equation*}
{\footnotesize
\xymatrix{
KO_{n+1}(Y)\ar[r]\ar[d] & KO_{n+1}(C^\ast(\pi_1(Y)))\ar[d]\\
KO_{n+1}(X)\ar[r]\ar[d] & KO_{n+1}(C^\ast(\pi_1(X)))\ar[d]\\
KO_{n+1}(X, Y) \ar[r]\ar[d] & KO_{n+1}(C^\ast(\pi_1(X), \pi_1(Y)))\ar[d]\\
KO_n(Y) \ar[r]\ar[d] & KO_{n}(C^\ast(\pi_1(Y)))\ar[d]\\
KO_{n}(X)\ar[r] & KO_{n}(C^\ast(\pi_1(X)))\\
}
}
\end{equation*}
\end{proof}

\bigno
We now prove that the existence of positive scalar curvature implies that a particular
index vanishes in the $KO$-theory of the relative group $C^\ast$-algebra. For the rest of this section,
the $C^\ast$-algebras involved are  maximal. If the reduced relative group $C^\ast$-algebra is well defined, then the rest of this section extends to the reduced case as well. We  will use $C^*(\pi_1(X), \pi_1(Y))$ to
denote both the reduced and maximal relative group $C^*$-algebra when the use of such a notation does
not cause confusion.

\bigno
Let $M$ be a spin manifold with boundary $N=\partial M$.
We assume that the dimension of $M$ is $0\modd 8$. The other cases can be handled in a similar way
with the help of suspensions.
More specifically, in dimension $k$ mod 8 for some $0\leq k<8$, we consider the manifold $M\times \reals^{8-k}$. We can define a relative higher index of the Dirac operator associated to the space  $M\times \reals^{8-k}$  in 
$KO_0(C^\ast(\pi_1(M), \pi_1(\partial M))\otimes C^\ast_L(\reals^k))$. 
We can apply the same argument below to show that this  relative index vanishes  
$KO_0(C^\ast(\pi_1(M), \pi_1(\partial M))\otimes C^\ast_L(\reals^k))$
if $(M, \partial M)$ is a compact spin manifold with boundary endowed with
a metric of positive scalar curvature that is collared at the boundary.
This relative higher index corresponds to the relative index of the Dirac operator associated to $M$ under the isomorphism
$$KO_0(C^\ast(\pi_1(M), \pi_1(\partial M))\otimes C^\ast_L(\reals^k))\cong KO_k(C^\ast(\pi_1(M), \pi_1(\partial M))) .$$
As a consequence, the  relative index of the Dirac operator associated to $M$  vansihes in 
$ KO_k(C^\ast(\pi_1(M), \pi_1(\partial M)))$ if $(M, \partial M)$ is a compact spin manifold with boundary endowed with
a metric of positive scalar curvature that is collared at the boundary.

\bigno
Extend the manifold by attaching a cylinder $W=N\times [0,\infty)$ to the boundary,
forming a noncompact manifold $Z$.
Let $D$ be the Dirac operator on $Z$.
Let $f$ be an odd smooth chopping function in the sense of Roe on the real line satisfying the following conditions:
(1) $|f(x)|\le1$ for all $x$ and
$f(x)\rightarrow \pm 1$ as $x \rightarrow \pm \infty$; (2) $g=f^2-1 \in S(\mathbb{R})$, the space of Schwartz functions, (3) if $\widehat{f}$ and $\widehat{g}$ are the Fourier transforms
of $f$ and $g$, respectively, then
$\supp (\widehat{f})\subseteq [-1, 1]$ and $\supp (\widehat{g} ) \subseteq [-2,2]$. Such a chopping
function exists (cf.~Roe \cite[Lemma 7.5]{Roe-1988}).
We define $$F_D= f(D)= \frac{1}{2\pi} \int_{-\infty}^{\infty} \widehat{f}(t) \exp(itD) dt .$$
By condition (3) above, it follows that the propagation of $F_D$ is at most 1.
 Let $[F_D]$ be its homology class in $KO_0^{lf}(Z)=KO^0(C_0(Z))$. We simplify the notation by replacing
$F_D$ with $F$ and $P_{F_D}$ with $P_D$. We write
$$\ind_L([F])=[P_{D}]
-\left[\almostzero\right]\in KO_0(C_L^\ast(Z)),$$ where $P_D$ is
an idempotent in the matrix algebra of $C_L^\ast(Z)^+$
and $\ind_L$ is the local index map.
The element $P_{D}-\almostzero$ belongs to the matrix algebra of the localization algebra
$C_L^\ast(Z)$.  

\bigno
Let $v$ be an invertible element in the matrix algebra of $C_0(\reals^7)^+$ representing
the generator in $KO_{-1}(C_0(\reals^7))\cong KO_0(C_0(\reals^8))$ (see Atiyah \cite{Atiyah-1966}
or Schr\"oder \cite[Proposition 1.4.11]{Schroeder-1993}).
 Let $\tau_D=v\otimes P_D+I\otimes(I-P_D)$. Then we have
$\tau_D^{-1}=v^{-1}\otimes P_D+I\otimes (I-P_D)$.  If $\chi_M$ is
the characteristic function on $M$, let $\tau_{D,M}=
(1\otimes\chi_M)\tau_D(1\otimes\chi_M)$ and $(\tau_D^{-1})_M=
(1\otimes\chi_M)\tau_D^{-1}(1\otimes\chi_M)$. 
In the future pages, we will simply write $\chi_M$ for $1\otimes \chi_M$.
For all $s\in [0,1]$, define $w_{D,M}(s)$ to be the product
\begin{equation*}
\left(
\begin{array}{cc}
I& (1-s)\tau_{D,M}\\0&I
\end{array}
\right)
\left(
\begin{array}{cc}
I& 0\\-(1-s)(\tau_D^{-1})_M&I
\end{array}
\right)
\left(
\begin{array}{cc}
I& (1-s)\tau_{D,M}\\0&I
\end{array}
\right)
\left(
\begin{array}{rr}
0& -I\\I&0
\end{array}
\right).
\end{equation*}
Define
\begin{equation*}
q_{D,M}(s)=w_{D,M}(s)\almostzero
w^{-1}_{D,M}(s).
\end{equation*}
Now define $C^\ast_L(N\subseteq M)$
be the closed two-sided ideal of $C^\ast_L(M)$ generated by $C^\ast_L(N)$
considered as a subalgebra of $C^\ast_L(M)$.
Then $\tau_{D,M}$ and $(\tau^{-1}_D)_M$ both lie in $C^\ast_L(M)\otimes C_0(\reals^7)$.  Both
$\tau_{D,M}(\tau^{-1}_D)_M-I$ and $(\tau^{-1}_D)_M\tau_{D,M}-I$ lie in $C^\ast_L(N\subseteq M)$.
As a consequence $q_{D,M}(0)$ is an element in the matrix algebra
of $(C_L^\ast(N\subseteq M)\otimes C_0(\reals^7))^+$. We define $[q_D]$ to be the $KO$-theory element 
\begin{equation*}
[(q_{D, M}(0), q_{D, M}(\cdot))]-
\left[\almostzero\right]
\end{equation*}
in $KO_0(S^7C_j)$, where $C_j$ is the mapping cone associated with 
$j\colon C_L^\ast(M, N)\to C_L^\ast(M)$ and $S^7C_j=C_j \otimes  C_0(\reals^7)$. The inclusion map 
$C^\ast_L(N)\to C_L^\ast(N\subseteq M)$ induces an isomorphism
\begin{equation*}
KO_\ast(C_L^\ast(N))\cong KO_\ast(C^\ast_L(N\subseteq M)),
\end{equation*} 
which can be proved by a standard Mayer-Vietoris sequence and a five-lemma argument. 
As a consequence,
we have the isomorphism 
$KO_0(S^7C_j)\cong KO_0(M,N)$.

\bigno
We call the class $[q_D]$ the {\it relative $KO$-homology class of $D$.} We
define the
{\it relative higher index of $D$} to be $\mu(q_D)\in KO_0(C^\ast(\pi_1(M), \pi_1(N)))$.

\vskip -2cm
\begin{center}
\setlength{\unitlength}{1cm}
\begin{picture}(15,7)
\put(4,3.5){\line(1,0){7}}
\put(4,5){\line(1,0){7}}
\put(7.5,3.25){\line(0,1){.5}}
\put(9.5,3.25){\line(0,1){.5}}
\qbezier(4,5)(4.5,4.25)(4,3.5)
\qbezier(4,5)(3.5,4.25)(4,3.5)
\put(4,5){\line(-1,0){2}}
\qbezier(2,5)(0,5)(0.1,3)
\qbezier(0.1,3)(0.75,1)(2.5,2.5)
\qbezier(2.5,2.5)(3.25,3.5)(4,3.5)
\qbezier(1.5,3.5)(1.625,3.25)(1.75,3.5)
\qbezier(1.55,3.5)(1.6,3.6)(1.7,3.5)
\put(8.25,5.15){$W=N\times[0,\infty)$}
\put(3.25,5.15){$\partial M=N$}
\put(0,4.75){$M$}
\put(2.5,1.5){$Z_{n}=M\cup_{N}(N\times[0,n])$}
\put(7.25,2.75){$T_{n}=N\times[\frac{n}{2},n]$}
\end{picture}
\end{center}

\vskip -.5cm
\begin{thm}
\label{index-theorem}
If $(M, \partial M)$ is a compact spin manifold with boundary endowed with
a metric of positive scalar curvature that is collared at the boundary, then the relative higher index of the Dirac operator
is zero in $KO_*(C^\ast(\pi_1(M),
\pi_1(\partial M)))$.
\end{thm}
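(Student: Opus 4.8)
The plan is to derive the vanishing from invertibility of the lifted Dirac operator (via the Lichnerowicz formula) and then to read it off from the explicit model for $\mu(q_D)$ constructed above.

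First I would extend the metric. Since the positive scalar curvature metric is collared at $N=\partial M$, near $N$ it is a product $g_N\oplus dt^2$, so $g_N$ itself has positive scalar curvature, and gluing on $W=N\times[0,\infty)$ with the product metric gives a complete metric on $Z=M\cup_N W$ with scalar curvature bounded below by some $\kappa_0>0$. This is local, hence lifts: on the universal cover $\widetilde Z$ of $Z$, which carries the $\pi_1(Z)=\pi_1(M)$ action, the scalar curvature is still $\ge\kappa_0$. By the Lichnerowicz formula $\widetilde D^2=\widetilde\nabla^\ast\widetilde\nabla+\tfrac14\widetilde\kappa\ge\tfrac14\kappa_0>0$, so $\widetilde D$ is invertible and $\mathrm{spec}(\widetilde D)\cap(-\tfrac12\sqrt{\kappa_0},\tfrac12\sqrt{\kappa_0})=\varnothing$.

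Next I would unwind $\mu(q_D)=e_\ast\circ(\chi_{s,max})_\ast([q_D])\in KO_0(S^7C_{\psi_{max}})=KO_0(C^\ast_{max}(\pi_1(M),\pi_1(N)))$. After the evaluation $e$ at $0$ we work in the Roe algebras $C^\ast_{max}(\widetilde M)^{\pi_1(M)}$, $C^\ast_{max}(\widetilde N)^{\pi_1(N)}$ and the mapping cone of $\psi_{max}$, where finite propagation is no longer required and the coarse index class is independent of the normalizing function (differences of normalizing functions lie in $C_0(\reals)$, so applied to $\widetilde D$ they land in the Roe algebra). Because $\widetilde D$ has a spectral gap I would use the normalizing function $x\mapsto x/|x|$, so $u(\widetilde D)=\mathrm{sign}(\widetilde D)$ is an honest self-adjoint involution, $g(\widetilde D)=u(\widetilde D)^2-I=0$, the idempotent built from $u(\widetilde D)$ is the constant $\almostzero$, and $\tau_{\widetilde D}=v\otimes\almostzero+I\otimes\almostzerotwo$ is a constant invertible element. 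Being constant in the $\widetilde Z$-direction it commutes with $\chi_{\widetilde M}$, so its compression $\tau_{\widetilde D,\widetilde M}$, read inside the matrix algebra of $C^\ast_{max}(\widetilde M)^{\pi_1(M)}\otimes C_0(\reals^7)$, is honestly invertible, not merely invertible modulo the boundary ideal. Then $w_{\widetilde D,\widetilde M}(0)=\mathrm{diag}(\tau_{\widetilde D,\widetilde M},\tau_{\widetilde D,\widetilde M}^{-1})$ and $q_{\widetilde D,\widetilde M}(0)=\almostzero$ exactly, so the mapping-cone cycle representing $\mu(q_D)$ has trivial ideal component and reduces to a path of idempotents each explicitly conjugate to $\almostzero$; a routine homotopy then shows $\mu(q_D)=0$ in $KO_0(C^\ast_{max}(\pi_1(M),\pi_1(N)))$. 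The reduced case is identical whenever $C^\ast_{red}(\pi_1(M),\pi_1(N))$ is defined, and for $\ast\equiv k\not\equiv0\pmod 8$ one runs the argument on $M\times\reals^{8-k}$ and applies the periodicity isomorphism recorded before the theorem.

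The main obstacle will not be the Lichnerowicz estimate but the bookkeeping in the last step: one must check that invertibility of the honest operator $\widetilde D$ --- which is exactly where the collar enters, since it is only after attaching the cylinder over $N$ that $\widetilde D$ becomes invertible on all of $\widetilde Z$ --- is transported faithfully through the compression $\chi_{\widetilde M}(\,\cdot\,)\chi_{\widetilde M}$, through the generalized asymptotic morphism $\chi_{s,max}$ of Lemma \ref{generalized-asymptotic-morphism} and the evaluation $e$, and through the mapping-cone bookkeeping, compatibly with the $\pi_1(M)$- and $\pi_1(N)$-equivariance of the two covers simultaneously. In particular, the replacement of the finite-propagation normalizing function (needed to stay inside the localization algebras, hence to even define $[q_D]$) by the infinite-propagation $\mathrm{sign}(\widetilde D)$ is legitimate only after $e$; I would control the resulting error terms using the truncations $Z_n=M\cup_N(N\times[0,n])$ and the necks $T_n=N\times[\tfrac n2,n]$, where positive scalar curvature supplies a uniform spectral gap.
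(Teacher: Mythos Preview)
Your proposal is sound and shares the essential inputs with the paper: Lichnerowicz on the cover, the spectral gap coming from the collared extension to $Z$, and the truncations $Z_n$, $T_n$. The paper, however, implements the vanishing differently. Rather than replace the finite-propagation chopping function by $\mathrm{sign}(\widetilde D)$ after the evaluation $e$, it keeps finite propagation throughout and rescales $D\mapsto uD$ with $u\to\infty$. The limit is made $K$-theoretically rigorous by a product/quotient device: one works in $\prod_n S^7C_{\phi_n}\big/\bigoplus_n S^7C_{\phi_n}$, where $C_{\phi_n}$ is the mapping cone attached to the pair $(Z_n,T_n)$, identifies all $C_{\phi_n}$ with $C_\phi$ via rescaling isometries of $[0,n]$, and observes that the diagonal element $[\prod_n\mu(q_D)]$ vanishes because Lichnerowicz drives the representatives $P^{(n)}_{u\widetilde D}(0)$ to the trivial idempotent as $u,n\to\infty$. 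For the $\pi_1(N)$-equivariant slot of the mapping cone the paper does not simply compress from $\widetilde Z$ but introduces the doubled cylinder $W'=N\times\reals$ with its own Dirac operator $D'$ on the $\pi_1(N)$-cover; this is the concrete mechanism behind your phrase ``compatibly with the $\pi_1(M)$- and $\pi_1(N)$-equivariance of the two covers simultaneously,'' and it is the one ingredient you do not name explicitly. Your route --- swap to $\mathrm{sign}(\widetilde D)$ and read off $P_{\widetilde D}=\left(\begin{smallmatrix}I&0\\0&0\end{smallmatrix}\right)$, hence $q_{\widetilde D,\widetilde M}(0)=\left(\begin{smallmatrix}I&0\\0&0\end{smallmatrix}\right)$ exactly --- is conceptually cleaner, but justifying that the swap preserves the mapping-cone class (on both the $\pi_1(M)$ and $\pi_1(N)$ sides) leads back to the same truncation machinery you already anticipate; the paper's rescaling-plus-product argument is one explicit way to discharge precisely the bookkeeping you flag as the main obstacle.
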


\begin{proof}
As before, let $N=\partial M$ and $Z=M\cup_N (N\times [0,\infty))$.
 Denote by $Z_n$ and $Z_n'$  the truncations
$Z_n=M\cup_N (N\times[0,n])$,  $Z_n'=M\cup_N (N\times[0,\frac{n}{2}])$, and let $T_n$ be the subset of $Z_n$
given by $T_n=N\times\left[\frac{n}{2}, n\right]$.
We assume that the dimension of $Z$ is $0 \modd 8
$. The other cases can be handled in a similar way
with the help of suspensions (refer back to the section after
Theorem \ref{amenable}).

\medskip\noindent
Let $u\in [1,\infty)$ and write $$\ind_L(uD)=[P_{uD}]-\left[\almostzero\right]
\in KO_0(C_L^\ast(Z)).$$

\bigno
We define $w_{D, Z_n}(s)$ and $q_{D, Z_n}(s)$
by replacing $M$ with $Z_n$ in the definitions of $w_{D,M}(s)$ and $q_{D, M}(s)$, respectively,
before Theorem \ref{index-theorem}.
By the finite propagation of $D$, we know that the propagation of
$\exp(i tD)$ is less than or equal to $|t|$. It follows that the propagation of
$P_{uD}$ is less than or equal to $100u$. This estimate is based on the matrix formula
before Proposition \ref{yu-localization}.

\begin{claim}
\label{long-claim1}
For all $u>0$, there exists $N_u>0$ such that, for all $n>N_u$, we have
$$\chi_{Z_n'} \left( 
q_{uD, Z_n}(0)-\almostzero \right)\chi_{Z_n'} =0,$$
$$ \chi_{T_n} \left( 
q_{uD, Z_n}(0)-\almostzero \right)\chi_{Z_n'} =0,$$
$$ \chi_{Z_n'} \left( 
q_{uD, Z_n}(0)-\almostzero \right)\chi_{T_n}=0.$$
\end{claim}

\begin{proof}
Let $\alpha=\tau_{uD, Z_n}$ and $\beta=(\tau^{-1}_{uD})_{Z_n}$. 
We can compute 
\begin{equation*}
w_{uD, Z_n}(0)\almostzero w^{-1}_{uD, Z_n}(0)
\end{equation*}
\begin{equation*}
=
\left(
\begin{array}{cc}
(2\alpha-\alpha\beta\alpha)\beta & (2\alpha-\alpha\beta\alpha)(I-\beta\alpha)\\
(I-\beta\alpha)\beta & (I-\beta\alpha)^2
\end{array}
\right).
\end{equation*}
We note that $(2\alpha-\alpha\beta\alpha)\beta = \alpha(I-\beta\alpha)\beta
+(\alpha\beta-I)+I$. Let $N_u=100u$.
%Let $q$ be a polynomial such that $|\exp(2\pi ix)-q(x)|<
%\frac{\varep}{10}$ when $|x|\le 10$. By the fact that $P_{uD}$ has propagation at most
%$100u$, there exists $N_u>0$ such that 
%\begin{equation*}
%\chi_{Z_n'}q(P_{uD}^{(n)})\chi_{Z_n'}=\chi_{Z_n'}q(P_{uD})\chi_{Z_n'}
%\end{equation*}
%for all $n>N_u$. On the other hand, using the fact that $P_{uD}$ is an
%idempotent, we have $\exp(2\pi iP_{uD})=I$. Therefore, for $n>N_u$, we have 
%\begin{align*}
%||\chi_{Z_n'}(\exp(2\pi iP_{uD}^{(n)})-I)\chi_{Z_n'}|| &<
%\frac{\varep}{10} + ||\chi_{Z_n'}(q(P_{uD}^{(n)}-I)\chi_{Z_n'}||\\
%&=\frac{\varep}{10} + ||\chi_{Z_n'}(q(P_{uD})-I)\chi_{Z_n'}||\\
%&=\frac{\varep}{10} +  ||\chi_{Z_n'}(q(P_{uD})-\exp(2\pi iP_{uD}))\chi_{Z_n'}
%  + \chi_{Z_n'}(\exp(2\pi iP_{uD}-I)\chi_{Z_n'}||\\
%&=\frac{\varep}{10}+ ||\chi_{Z_n'}(q(P_{uD})-\exp(2\pi iP_{uD}))\chi_{Z_n'}||\\
%&<\varep. \qedhere
%\end{align*} 
Using the formulas for $\alpha$ and $\beta$, and the
fact that $P_{uD}$ has propagation at
most $100u$, we know that the elements 
$\chi_{Z_n'}(\alpha\beta-I)$,  $\chi_{Z_n'}(\beta\alpha-I)$, $(\alpha\beta-I)\chi_{Z_n'}$,
$(\beta\alpha-I)\chi_{Z_n'}$, $(I-\beta\alpha)\beta\chi_{Z_n'}$ and 
$\chi_{Z_n'}\alpha(I-\alpha\beta)$ are all zero when $n>N_u$. 
Now our claim follows.
\end{proof}

\noindent
Let $P_{uD}^{(n)}=\chi_{Z_n}P_{uD}\chi_{Z_n}$, where $\chi_{Z_n}$ is
the characteristic function on $Z_n$.
By the construction of $P_{uD}$ we have $||P_{uD}||\le 10$.  As a result, we have
$||P^{(n)}_{uD}||\le 10$, giving an upper bound for $||q_{uD, Z_n}||$.
Together with the above claim, this implies that
$\left[\prod_n
q_{uD, Z_n}(0)\right] \in
\prod_n (S^7C_L^\ast (Z_n))^+/\bigoplus_n (S^7C_L^\ast (Z_n))^+$
belongs to the image of the inclusion map
$$\prod_n (S^7C_L^\ast (T_n))^+/\bigoplus_n (S^7C_L^\ast (T_n))^+ 
   \rightarrow \prod_n (S^7C_L^\ast (Z_n))^+/\bigoplus_n (S^7C_L^\ast (Z_n))^+, $$
where $(S^7C_L^\ast (Z_n))^+$ and $(S^7C_L^\ast (T_n))^+$ are respectively obtained from  $S^7C_L^\ast (Z_n)$ and $S^7C_L^\ast (T_n)$ by adjoining a unit.
We identify $\left[\prod_n q_{uD, Z_n}(0)\right]$ with an element in \linebreak
$\prod_n (S^7C_L^\ast (T_n))^+/\bigoplus_n (S^7C_L^\ast (T_n))^+.$
Now
$\left(\prod_n q_{uD, Z_n}(0), \prod_n q_{uD, Z_n}(s)\right)$
gives an element in the matrix algebra of $ \prod_n (S^7C_{j_n})^+/\bigoplus_n (S^7C_{j_n})^+$,
where $s\in [0,1]$ is the variable and $C_{j_n}$ is the mapping cone
of the homomorphism
 $ j_n\colon S^7C_L^\ast (T_n)  \rightarrow S^7C_L^\ast (Z_n)$.

\medskip\noindent
Recall that $W=N\times [0,\infty)$ and let
 $W'=N\times\reals$ be the double of $W$.
Let $D'$ be the Dirac operator on $W'$. Let $\widetilde{W'}$ be the universal cover of $W'$
and $\widetilde{D'}$ be the lifting of $D'$ to $\widetilde{W'}$.
Let $\widetilde{D}$ be the lifting of $D$ to $\widetilde{Z}$,
the universal cover of  $Z$.  We know that $P_{u \widetilde{D'}}(0)$ and $ P_{u\widetilde{D}}(0)$
are respectively the liftings of
$P_{u D'}(0)$ and $P_{uD}(0)$. Define
$$x_{n, u}(s) = q_{u\widetilde{D}, \widetilde{Z}_n}(s),$$
where $P^{(n)}_
{u\widetilde{D}} = \chi_{\widetilde{Z}_n}P_{u\widetilde{D}}\chi_{\widetilde{Z}_n}$.
Define $$ y_{n,u} =  q_{u\widetilde{D}', \widetilde{W}'_n}(0),$$
where $W'_n = N\times (-\infty, n]$.
By an argument similar to the proof of Claim \ref{long-claim1}, we know that
$\left[\prod_n y_{n, u} \right]$ is an operator in the image of the inclusion map:
$$\prod_n (S^7C^\ast (\widetilde{T}_n)^{\pi_1( N)})^+/\bigoplus_n
 (S^7 C^\ast (\widetilde{T}_n)^{\pi_1( N)})^+ \rightarrow
\prod_n (S^7C^\ast (\widetilde{W'})^{\pi_1( N)})^+/\bigoplus_n ( S^7C^\ast
(\widetilde{W'})^{\pi_1( N)})^+.$$ We identify $\left[\prod_n y_{n, u} \right]$
with an element in $\prod_n (S^7C^\ast (\widetilde{T}_n)^{\pi_1( N)})^+/\bigoplus_n (S^7 C^\ast (\widetilde{T}_n)^{\pi_1( N)})^+$.

\bigskip\noindent
By Lemma \ref{big-lemma} and Formula \ref{psi-max-formula}, there
is a natural $\ast$-homomorphism $$\phi_n\colon
C^\ast(\widetilde{T}_n)^{\pi_1(N)}\to C^\ast(T_n')^{\pi_1(M)}.$$ 
For each $n$, the map $\phi_n$ induces a natural $\ast$-homomorphism
$$ S^7C^\ast(\widetilde{T}_n)^{\pi_1(N)}\to S^7C^\ast(\widetilde{Z}_n)^{\pi_1(M)},$$ which
we still denote by $\phi_n$. 
We have  $$\left[\prod_n \phi_n(y_{n, u}) \right] = \left[\prod_n x_{n, u}(0) \right]$$
 in
$\prod_n (S^7C^\ast (\widetilde{Z}_n)^{\pi_1( M)})^+/\bigoplus_n  
(S^7C^\ast (\widetilde{Z}_n)^{\pi_1( M)})^+.$
  Denote by $C_{\phi_n}$ the mapping cone of the map $\phi_n$.
The element $\prod_n (y_{n, u}, x_{n, u}(s))$ gives a $KO$-theory element
$$\left[\prod_n (y_{n, u}, x_{n, u}(s))\right]$$ in
$KO_0 \left( \prod_n S^7C_{\phi_n}/\bigoplus_n S^7C_{\phi_n}\right).$

\bigskip\noindent
Let $V_{1,n}\colon L^2[0,n]\to L^2[0,1]$ be the isometry given by $f(\cdot)\mapsto
\frac{1}{\sqrt{n}}f(n\cdot)$ for all $f\in L^2[0,n]$. Let $V_{2,n}\colon L^2[\frac{n}{2},n]
\to L^2[\frac{1}{2},1]$ be the
isometry given by $f(\cdot)\mapsto \frac{1}{\sqrt{n}}f(n\cdot)$ for all $f\in L^2[0,\frac{n}{2}]$.
We can naturally construct isometries $V_{1,n}'\colon L^2(\widetilde{Z}_n)\to L^2(\widetilde{Z}_1)$
and $V_{q,n}'\colon L^2(\widetilde{T}_n)\to L^2(\widetilde{T}_1)$. Conjugations by $V_{1,n}'$
and $V_{2,n}'$ give us a $\ast$-isomorphism $C_{\phi_m}\to C_{\phi_1}$,
where $C_{\phi_1}$ is naturally $\ast$-isomorphic to $C_\phi$.
Identifying $(S^7C_{\phi_m})^+$ with $(S^7C_{\phi})^+$ 
for all $n$, we see that  $[(y_{n,u}, x_{n,u}(s))]=[\mu(q_D)]$ in 
$KO_0(S^7C_\phi)$. It follows that there is a
natural isomorphism
$$\psi\colon KO_0  \left(   \prod_n S^7C_{\phi_n}/\bigoplus_n S^7C_{\phi_n}  \right)\to KO_0
\left( \prod_n S^7C_{\phi}/\bigoplus_n S^7C_{\phi} \right) $$ such that
\begin{equation*}
\psi \left(\left[\prod_n (y_{n, u}, x_{n, u}(s))\right]\right)=\left[\prod_n \mu(q_D)\right],
\end{equation*}
where $\mu(q_D) \in KO_0 (S^7C_{\phi})
\cong KO_0(C^\ast(\pi_1(M), \pi_1(N)))$ was defined as the relative higher index of $D$
before Theorem \ref{index-theorem}
and $KO_0( \prod_n S^7C_{\phi}/\bigoplus_n S^7C_{\phi})$
is identified with
\begin{equation*}
\prod_n KO_0(S^7C_{\phi})/\bigoplus_n KO_0 (S^7C_{\phi}).
\end{equation*}
When $M$ has a metric of uniform positive scalar curvature, then by the 
Lichnerowicz formula we know that $P^{(n)}_{u\widetilde{D}}(0)$ converges to 
$\almostzero$ in the operator norm as $u\to \infty$,  $n\to\infty$ and $n\ge N_u$. 
As a consequence, we know that
\begin{equation*}
\tau_{u\widetilde{D}, \widetilde{W}_n'}\to v\otimes\almostzero + I\otimes 
\almostzerotwo
\end{equation*}
and 
\begin{equation*}
 y_{n,u} \to 
\exp\left(2\pi i
\almostzero\right)=I
\end{equation*}
 in the operator norm as $u\to\infty$, $n\to\infty$ and $n\ge N_u$. 
Together with the formula for $\chi_{n,u}(s)$, we then have
 $\left[\prod_n(y_{n,u}, x_{n,u}(s))\right]=0$
in $KO_0\left( \prod_n S^7C_{\phi}/\bigoplus_n S^7C_{\phi} \right)$. 
Therefore it follows that $\mu(q_D)=0$.
\end{proof}

\bigno
As mentioned in the introduction,
the Gromov-Lawson-Rosenberg conjecture states that a closed spin manifold $M^n$
with $n\ge5$ has a metric of positive scalar curvature if, and only if, its Dirac index vanishes
in $KO_\ast (C^\ast_r\pi)$, where $\pi=\pi_1(M)$. We formulate now a relative version
of this conjecture.

\begin{conj}
\label{conjecture-relative-glr} 
(Relative Gromov-Lawson-Rosenberg)
Let $(N, \partial N )$ be a compact spin manifold with boundary. Let $n$  be the dimension of $N$.
 If the relative higher index of the Dirac operator $D$ is zero in
$KO_n( C^\ast (\pi_1(N), \pi_1 (\partial N)))$,
  then there is a metric of positive scalar curvature on
$N$ that is collared near $\partial N$.
\end{conj}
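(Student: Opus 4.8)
The plan is to adapt the bordism-theoretic strategy underlying all known cases of the absolute Gromov--Lawson--Rosenberg conjecture to pairs. The first move controls the boundary. In the relative $KO$-homology long exact sequence
\begin{equation*}
\cdots \to KO_n(C^\ast(\pi_1 N)) \to KO_n(C^\ast(\pi_1 N,\pi_1\partial N)) \xrightarrow{\;\partial\;} KO_{n-1}(C^\ast(\pi_1\partial N)) \to \cdots ,
\end{equation*}
the class $\mu(q_D)$ maps to the ordinary higher index of the Dirac operator of the closed spin manifold $\partial N$. Hence if $\mu(q_D)=0$ the boundary index vanishes, and --- for $\dim\partial N\ge 5$ and under the absolute Gromov--Lawson--Rosenberg conjecture for $\pi_1(\partial N)$, which for the torsion-free amenable groups of small cohomological dimension considered here is within reach of Theorem \ref{amenable} together with the Stolz splitting method --- the boundary $\partial N$ carries a metric $g_\partial$ of positive scalar curvature. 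Low-dimensional boundaries would be treated separately, e.g.\ by crossing with a high-dimensional torus and untwisting.

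Next I would fix such a $g_\partial$ and ask whether it extends to a collared positive scalar curvature metric on $N$. The crucial geometric input is a \emph{relative collared surgery theorem}: if $(N,\partial N)$ and $(N',\partial N')$ are spin bordant rel the reference data $(B\pi_1 N, B\pi_1\partial N)$ through a bordism assembled from handles of codimension at least $3$ --- including boundary handles in the collared sense of Gajer --- then $N$ admits a collared positive scalar curvature metric restricting to $g_\partial$ if and only if $N'$ does. Granting this, the obstruction to extending $g_\partial$ depends only on the class of $(N,\partial N,g_\partial)$ in a relative analogue of Stolz's positive scalar curvature bordism group, fitting into an exact sequence relating the relative spin bordism $\Omega^{\mathrm{spin}}_\ast(B\pi_1 N,B\pi_1\partial N)$, a relative $R$-group, and the bordism group of pairs carrying collared positive scalar curvature.

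The heart of the matter is then to identify this relative $R$-group with $KO_\ast(C^\ast(\pi_1 N,\pi_1\partial N))$ via the relative index map of this paper, and to show $\mu$ is injective on it. One uses Theorem \ref{amenable} to replace the relative group $C^\ast$-algebra $KO$-theory by the relative $KO$-homology $KO_\ast(B\pi_1 N,B\pi_1\partial N)$, passes to connective $ko$-homology after inverting $2$, and exploits that for torsion-free amenable groups of small cohomological dimension every relevant relative $ko$-homology class is geometric, i.e.\ realized by a pair admitting collared positive scalar curvature (the relative form of Stolz's realization argument). Feeding this back through the surgery theorem shows that $\mu(q_D)=0$ forces $(N,\partial N,g_\partial)$ to be bordant to a pair with collared positive scalar curvature, whence $N$ itself admits the desired metric.

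I expect the main obstacle to be the relative collared surgery theorem and the bookkeeping it entails at the boundary: a handle attached along $\partial N$ modifies $\partial N$ and $N$ simultaneously, and one must keep the extended metric positively curved and product-like near the changing boundary throughout --- in particular for handles of codimension $2$ in $\partial N$, the borderline case where the Gromov--Lawson deformation is most delicate. The secondary obstacle is the homological algebra identifying the geometric and analytic relative $R$-groups for this class of groups; this is precisely where the cohomological-dimension hypotheses enter, and, as the introduction notes, it is why the full statement remains conjectural rather than a theorem.
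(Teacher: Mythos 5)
This statement is a \emph{conjecture}, not a theorem: the paper offers no proof of it, and in fact the remark immediately following it explicitly warns that, because of the failure of the unstable Gromov--Lawson--Rosenberg conjecture (Schick \cite{Schick-1998}, Dwyer--Schick--Stolz \cite{Dwyer-Schick-Stolz-2003}), ``in general, this statement cannot be true as stated'' --- it is to be read either stably (after crossing with Bott manifolds) or as a guide. So no blind proof of Conjecture \ref{conjecture-relative-glr} as literally stated can be correct: the conjecture imposes no hypotheses whatsoever on $n$, on $\pi_1(N)$, or on $\pi_1(\partial N)$, and already in the absolute case (empty boundary) there are counterexamples. Your own argument quietly imports extra hypotheses --- $\dim \partial N\ge 5$, torsion-free amenable fundamental groups of small cohomological dimension, validity of the absolute conjecture for $\pi_1(\partial N)$ --- none of which appear in the statement. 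That importation is the gap.

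What you have written is, however, essentially the paper's own route to the \emph{conditional} results it does prove. Theorem \ref{thm-pos} establishes the relative bordism invariance of collared positive scalar curvature via the Gajer/Gromov--Lawson collared surgery theorem together with Conner's corner-rounding, and identifies the obstruction with the image of $D_\ast[(M,\partial M),u]$ in relative $ko$-homology, using relative extensions of F\"uhring (away from 2) and Stolz (at 2); the ensuing corollary says the conjecture holds whenever $p\colon ko_n\to KO_n$ and the relative assembly map $A$ are both injective; and the final theorem of that section verifies this for amenable, torsion-free fundamental groups with cohomological dimension less than $n$ via an Atiyah--Hirzebruch comparison and Higson--Kasparov. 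Your sketch matches this architecture, including the correct identification of the delicate points (the collared surgery theorem at the boundary, and the comparison of geometric and analytic relative obstruction groups). But presented as a proof of the conjecture itself it fails at the outset, and your concluding sentence --- conceding that the full statement remains conjectural --- is an admission that what precedes it is not a proof of the stated claim.
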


\begin{rem}
\leavevmode
\begin{enumerate}
\item This conjecture is made by analogy to surgery theory, where obstructions to
surgery for degree one normal maps have this formal structure.
\item By the Gromov-Lawson
surgery theorem
%\mymar{SW: what is wrong with this Gajer result? ``Gajer is careless with non-finite
%generation of $\pi_2$; this is attempted to be corrected in a paper by Stolz (who
%prepares all the necessary refinements, but glosses over the crucial point himself)} 
\cite[Theorem A]{Gromov-Lawson-1980} and Schoen-Yau \cite[Corollary 6]{Schoen-Yau-1979}
 as improved
by Gajer \cite{Gajer-1987}, 
the $\pi$-$\pi$ case of the conjecture is correct.
%\item  The statement has good formal properties: the boundary of such an obstruction
%would be an obstruction by the classic Gromov-Lawson-Rosenberg conjecture; if it
%vanishes, and if the conjecture were true for $M$, one could define a relative
%\mymar{SW: does this refer to the relative high index of the two psc metrics of APS that
%you use later?}
%obstruction $\alpha$ in $KO_\ast(C^\ast_{max}(\pi_1 M))$.
%whose indeterminacy would arise from different positive scalar curvature metrics on the
%boundary.
\item  Because of the the failure
of stability for the ordinary Gromov-Lawson conjecture
%\mymar{specify references}
Schick \cite[Example 2.2]{Schick-1998} and Dwyer-Schick-Stolz \cite{Dwyer-Schick-Stolz-2003}),
we recognize that, in general,  this statement
cannot  be true as stated.  One should either
interpret it as a stable conjecture (i.e.~after crossing
with some number of Bott manifolds, see Rosenberg-Stolz
\cite{Rosenberg-Stolz-1995}) or as a guide to prove the
correct statement in the unstable situation.
%(e.g. in situations that are
%low cohomological dimension away from some generalized $\pi$-$\pi$ set-up).
We hope to address this matter in a future paper.
\end{enumerate}
\end{rem}

\bigno
In Rosenberg-Stolz \cite{Rosenberg-Stolz-1995}, the index map $\alpha\colon
\Omega^{spin}_n(B\pi)\to KO_n(C_r^\ast\pi)$ is factored in the following way:
\begin{equation*}
\Omega_n^{spin}(B\pi)\xrightarrow{D_\ast} ko_n(B\pi)\xrightarrow{p} KO_n(B\pi)
\xrightarrow{A} KO_n(C^\ast_r\pi)
\end{equation*}
where $p\colon ko_n(B\pi)\to KO_n(B\pi)$ is the canonical map from connective
to periodic $KO$-homology and $A$ is the standard assembly map. This sequence
can be generalized to pairs. Let $(N, \partial N)$ be a manifold with boundary and
let $\pi=\pi_1(N)$ and $\pi^\infty=\pi_1(\partial N)$. Then we have a composition
\begin{equation*}
\Omega_n^{spin}(B\pi, B\pi^\infty)\xrightarrow{D_\ast} ko_n(B\pi, B\pi^\infty)
\xrightarrow{p} KO_n(B\pi, B\pi^\infty)
\xrightarrow{A} KO_n(C^\ast(\pi, \pi^\infty)).
\end{equation*}
Let $\mathrm{Pos}_n^{spin}(B\pi, B\pi^\infty)$ be the subgroup of
$\Omega_n^{spin}(B\pi, B\pi^\infty)$ consisting of bordism classes
represented by pairs $(M^n, \partial M^n, f)$ for which $M$ admits a metric of positive
scalar curvature that is collared near the boundary.

\bigno
There is a map from $\partial M$ to $B\pi^\infty$ classifying its universal
cover $\widetilde{\partial M}$.  
By elementary homotopy
theory, the composite map to $B\pi$ commutes up to homotopy with the map
$M \to B\pi$ classifying its universal cover $\widetilde{M}$. 
The homotopy extension principle then implies that  we have a
map of pairs $(M, \partial M) \to (B\pi, B\pi^\infty)$.

\begin{thm}
\label{thm-pos}
Let $(M, \partial M)$ be a spin manifold with boundary of dimension $\ge6$.
Let $\pi=\pi_1(M)$ and $\pi^\infty=\pi_1(\partial M)$. Let $u\colon (M, \partial M)
\to (B\pi, B\pi^\infty)$ be the map described above. Then
$(M, \partial M)$ has a positive scalar curvature which is collared near the boundary
$\partial M$ if, and only if,  the index
$D_\ast[(M, \partial M), u]$ lies in $\mathrm{Pos}^{ko}_n(B\pi, B\pi^\infty)$,
where $\mathrm{Pos}^{ko}_n(B\pi, B\pi^\infty)$ is the image of $D_\ast$ restricted
to $\mathrm{Pos}_n^{spin}(B\pi, B\pi^\infty)$.
\end{thm}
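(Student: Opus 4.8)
The plan is to deduce Theorem \ref{thm-pos} from Theorem \ref{index-theorem} together with the relative version of the Gromov--Lawson surgery machinery and a relative analog of the Rosenberg--Stolz analysis of $\pos^{spin}$. The ``only if'' direction is essentially contained in what we have already proved: if $(M,\partial M)$ carries a positive scalar curvature metric collared near the boundary, then $[(M,\partial M),u]$ represents a class in $\pos_n^{spin}(B\pi, B\pi^\infty)$ by definition, so its image $D_\ast[(M,\partial M),u]$ lies in $\pos_n^{ko}(B\pi, B\pi^\infty)$ tautologically. (One should remark here that the map $u$ used to define the bordism class is exactly the map of pairs constructed just before the theorem statement, so there is no ambiguity.) The content is entirely in the ``if'' direction.

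For the converse, suppose $D_\ast[(M,\partial M),u] \in \pos_n^{ko}(B\pi,B\pi^\infty)$, so that there is a spin manifold with boundary $(M',\partial M', u')$ admitting a collared positive scalar curvature metric with $D_\ast[(M',\partial M',u')] = D_\ast[(M,\partial M),u]$ in $ko_n(B\pi, B\pi^\infty)$. The first step is to compare $(M,\partial M)$ and $(M',\partial M')$ bordism-theoretically: their difference maps to $0$ in $ko_n(B\pi,B\pi^\infty)$ via $D_\ast$, so it lies in the kernel of $D_\ast$ on $\Omega_n^{spin}(B\pi,B\pi^\infty)$. The second step is the relative Gromov--Lawson--Schoen--Yau surgery theorem (the relative form of \cite[Theorem A]{Gromov-Lawson-1980}, \cite[Corollary 6]{Schoen-Yau-1979}, as improved by Gajer \cite{Gajer-1987}): within a fixed $2$-connected bordism class over $(B\pi, B\pi^\infty)$, the existence of a collared positive scalar curvature metric depends only on the bordism class, provided $\dim M \ge 6$ so that the relevant surgeries are below the middle dimension on both $M$ and $\partial M$. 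Thus one may first do surgery to make $u\colon (M,\partial M)\to(B\pi,B\pi^\infty)$ $2$-connected without changing whether $M$ admits the desired metric, and then reduce to showing: the vanishing of $D_\ast$ on the difference class forces $(M,\partial M)$ to be relatively spin-bordant, through $2$-connected maps, to something in $\pos_n^{spin}$.

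The third step, and the technical heart, is to identify the kernel of $D_\ast\colon \Omega_n^{spin}(B\pi,B\pi^\infty)\to ko_n(B\pi,B\pi^\infty)$ with bordism classes that are represented by positive scalar curvature manifolds. This is the relative analog of the Rosenberg--Stolz computation that $\pos_n^{spin}(B\pi) = \ker(D_\ast)$ stably; concretely one uses that $ko_\ast$ is the connective cover built from $\Omega^{spin}_\ast$ via the Atiyah--Bott--Shapiro orientation, that classes killed by $D_\ast$ are detected in the Postnikov tower by low-dimensional data, and that those classes are realized by manifolds that are ``totally non-spin away from a positive scalar curvature piece'' — which, after the surgery step, admit collared positive scalar curvature metrics. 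This passes through the companion relative conjecture \ref{conjecture-relative-glr} in its verified $\pi$--$\pi$ form (via the homotopy-extension argument already recorded before the theorem, which produces the map of pairs), together with Theorem \ref{index-theorem} to see that the relative higher index obstruction vanishes on these classes. The main obstacle I anticipate is precisely making the relative surgery/bordism bookkeeping rigorous: controlling simultaneously the surgeries on $M$ and on $\partial M$ so that collaring is preserved, and checking that the Rosenberg--Stolz Postnikov-tower argument goes through for pairs — i.e.\ that $\pos_n^{ko}$ on the pair is genuinely the image of $D_\ast$ on $\pos_n^{spin}$ and not something strictly larger. Everything else (the index computation, the long exact sequences, the $C^\ast$-algebraic input) is already in place from the earlier sections.
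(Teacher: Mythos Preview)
Your overall architecture is right: the ``only if'' direction is tautological, and for ``if'' you correctly reduce to (a) showing that admitting a collared psc metric depends only on the relative spin bordism class over $(B\pi,B\pi^\infty)$, and (b) showing that $\ker D_\ast\subseteq \pos_n^{spin}(B\pi,B\pi^\infty)$. The paper proceeds exactly this way, and your invocation of Gajer's extension of the Gromov--Lawson surgery theorem for step (a) is on target (the paper adds the detail that two applications are needed---first on the boundary cobordism, then on the interior---together with a Conner-type corner-rounding to keep everything collared).

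Where you go astray is in step (b). You propose to identify $\ker D_\ast$ with psc classes by routing through Theorem~\ref{index-theorem} and the $\pi$--$\pi$ case of Conjecture~\ref{conjecture-relative-glr}. Neither of these helps here: Theorem~\ref{index-theorem} only says psc $\Rightarrow$ index vanishes, not the converse, and the $\pi$--$\pi$ case does not give you control over an arbitrary class in $\ker D_\ast$ for a general pair $(B\pi,B\pi^\infty)$. The vague ``Postnikov-tower'' gesture does not substitute for an actual description of the kernel. The paper instead invokes two specific structural results: away from the prime $2$, F\"uhring's smooth Baas--Sullivan argument (the published form of Jung's result cited in Rosenberg--Stolz) shows that $\ker D_\ast$ is generated by psc manifolds; at the prime $2$, Stolz's theorem that the kernel is generated by total spaces of $\mathbb{HP}^2$-bundles (which carry psc) does the job. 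The passage to pairs---which you flag as the main worry---is handled in one line: both F\"uhring and Stolz effectively establish that $\ker D_\ast$ is itself a homology theory in the target variable, so the absolute statements extend formally to pairs. You should replace your third step with these citations and drop the appeal to Theorem~\ref{index-theorem}, which plays no role in this proof.
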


%\mymar{SW: anything usable from Walsh?}
\begin{proof}
First we will explain that the capacity of a spin manifold $M^n$  for $n\ge6$ to admit a positive  
scalar curvature metric depends only on its spin cobordism class.  As in the closed case, this result
follows from the Gromov-Lawson surgery
 theorem, or equivalently the reduction to spin cobordism.  
For manifolds $(M, \partial M)$ with boundary whose 
%\mymar{SW: Roe still grouchy about Gajer, sigh}
%\mymar{SW: Roe still grouchy about proof, sigh}
boundary is collared, there is a relative
surgery theorem that follows from an improvement by Gajer \cite{Gajer-1987}
of the usual
 Gromov-Lawson surgery theorem that provides a collared neighborhood
 for the trace of the surgery. We remind the reader that
the Gromov-Lawson theorem
holds if the spin cobordism respects fundamental group and the dimension is at least
5. The proof of our theorem requires two applications of the
Gajer/Gromov-Lawson Theorem, as we will now demonstrate. 
Let $(M, \partial M)$ and $(M', \partial M')$ be cobordant and suppose
that $(M, \partial M)$ has a metric of positive scalar curvature that is collared
near the boundary. 

%\mymar{BW: Can we say this without quotations? change to first edition}
\medno
We first apply the surgery theorem to the restricted cobordism between $\partial M$
and $\partial M'$ 
to endow $\partial M'$ with a metric of positive scalar curvature. 
 Conner \cite{Conner-1979} allows us to execute a rounding maneuver that
replaces metric pieces isometric to a product $\partial M \times Q$
of the boundary with 
 a quadrant $Q$ with metric pieces isometric to the product $\partial M \times H$ 
of the boundary with a half-space $H$ (and vice versa). 
Finally we use the Gromov-Lawson theorem to endow $M'$ with
a metric of positive scalar curvature that is collared near $\partial M'$.
We note that the original proof of the Gromov-Lawson theorem is  sufficiently local as to be 
unchanged by this slight additional generality.

\medno
As a next step, we need to show 
that all the elements of the kernel of the map
from  relative spin bordism to relative $ko$ have positive scalar curvature, i.e.~that 
$\mathrm{ker}\, D_\ast\subseteq\mathrm{Pos}^{spin}_n(B\pi,
B\pi^\infty)$. Both away from the prime 2 and at the prime 2, the inclusion can be
obtained from the relative versions of existing theorems. Away from 2, the result
holds by readapting the result of F\"uhring \cite{Fuehring-2012} on Baas-Sullivan
theory. This result was stated in Rosenberg-Stolz \cite{Rosenberg-Stolz-2001}
as unpublished  work of Jung. F\"uhring
proves that a smooth spin closed manifold $M$ of dimension $n\ge5$ admits a metric
of positive scalar curvature if its orientation class in $ko_n(B\pi)$ lies in the
subgroup consisting of elements which contain positive representatives. At the prime
2, we can extend Theorem B (2) of Stolz \cite{Stolz-1994}. Here he proves the following.
Let $X$ be a topological space.
 Suppose that $T_n(X)$
is the subgroup of $\Omega_n^{spin}(X)$ consisting of bordism classes $[E, f\circ p]$,
where $p\colon E\to B$ is an $\hp2$-bundle over a spin closed manifold $B$ of dimension
$n-8$ and $f$ is a map $B\to X$. Then the map $\Omega_n^{spin}(X)/T_n(X)\to ko_n(X)$
is a 2-local isomorphism. In the papers of
both F\"uhring and Stolz it is effectively shown that the kernel of $D_\ast$
is a homology theory. As such we can extend these results to pairs.
\end{proof}

\begin{cor}
Let $p\colon ko_n(B\pi, B\pi^\infty)\to KO_n(B\pi, B\pi^\infty)$
and $$A\colon KO_n(B\pi, B\pi^\infty)\to KO_n(C^\ast(\pi_1, \pi_1^\infty))$$
be as above, with $n\ge6$.
The Relative Gromov-Lawson-Rosenberg  conjecture holds if $p$ and $A$
are both injective.
\end{cor}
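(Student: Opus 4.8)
�The plan is to chain together the results already established in this section. Recall that Theorem~\ref{index-theorem} gives one direction: if $(M,\partial M)$ carries a positive scalar curvature metric collared near the boundary, then the relative higher index $\mu(q_D)$ vanishes in $KO_n(C^\ast(\pi_1,\pi_1^\infty))$. So the content of the corollary is the converse, and for that I would invoke Conjecture~\ref{conjecture-relative-glr} (Relative Gromov-Lawson-Rosenberg) \emph{provided} its hypothesis can be verified from the hypotheses on $p$ and $A$.

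First I would set up the factorization of the relative assembly/index map recalled just before Theorem~\ref{thm-pos}:
\begin{equation*}
\Omega_n^{spin}(B\pi, B\pi^\infty)\xrightarrow{D_\ast} ko_n(B\pi, B\pi^\infty)
\xrightarrow{p} KO_n(B\pi, B\pi^\infty)\xrightarrow{A} KO_n(C^\ast(\pi, \pi^\infty)),
\end{equation*}
and identify the composite $A\circ p\circ D_\ast$ (applied to the fundamental class of $(M,\partial M)$ together with the classifying map $u$) with the relative higher index of the Dirac operator $D$. Assuming this identification, suppose the relative higher index of $D$ is zero. Since $A$ and $p$ are both injective by hypothesis, $A\circ p$ is injective, so $D_\ast[(M,\partial M),u]=0$ in $ko_n(B\pi,B\pi^\infty)$; in particular $D_\ast[(M,\partial M),u]$ lies (trivially) in $\mathrm{Pos}^{ko}_n(B\pi,B\pi^\infty)$, the image of $\mathrm{Pos}_n^{spin}$ under $D_\ast$. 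Then Theorem~\ref{thm-pos} applies directly and produces a positive scalar curvature metric on $M$ collared near $\partial M$. This already gives the statement in dimension $\ge 6$ without needing to phrase things through Conjecture~\ref{conjecture-relative-glr} at all.

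The steps, in order, are: (1) record the compatibility between the topological index $A\circ p\circ D_\ast$ on relative spin bordism and the analytically-defined relative higher index $\mu(q_D)$ of Theorem~\ref{index-theorem} --- this is the standard index-theorem bridge, relative version; (2) use injectivity of $p$ and $A$ to conclude $D_\ast[(M,\partial M),u]=0\in ko_n(B\pi,B\pi^\infty)$; (3) observe $0\in\mathrm{Pos}^{ko}_n(B\pi,B\pi^\infty)$ since $\mathrm{Pos}_n^{spin}$ contains bordism classes of genuine positive-scalar-curvature manifolds (e.g.\ a product of spheres) whose image under $D_\ast$ is $0$; (4) apply Theorem~\ref{thm-pos} to obtain the collared positive scalar curvature metric, and Theorem~\ref{index-theorem} for the reverse implication to get the "if and only if" consistency.

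The main obstacle is step~(1): verifying that the analytic relative higher index defined via the mapping-cone $C^\ast$-algebra machinery genuinely agrees with the composite $A\circ p\circ D_\ast$ through the relative Rosenberg-Stolz factorization. In the absolute case this is classical, but in the relative setting one must check the naturality of all the mapping cones --- that the relative local index map $\ind_L$, the pushdown/lifting isomorphisms of Lemma~\ref{generalized-asymptotic-morphism}, and the map $\psi_{max}$ of~\eqref{psi-max-formula} assemble into a commuting diagram compatible with the long exact sequences in both $ko$-homology and $C^\ast$-algebra $KO$-theory. I expect this to be a diagram chase using the relative long exact sequence $\cdots\to KO_\ast(Y)\to KO_\ast(X)\to KO_\ast(X,Y)\to\cdots$ together with its $C^\ast$-algebraic counterpart and the five-lemma, much as in the proof of Theorem~\ref{amenable}; the hypotheses that $p$ and $A$ are injective are then used exactly as above.
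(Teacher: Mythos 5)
Your proposal is correct and is exactly the argument the paper intends (the corollary is stated without proof as an immediate consequence of Theorem~\ref{thm-pos}): injectivity of $A$ and $p$ forces $D_\ast[(M,\partial M),u]=0$, which trivially lies in $\mathrm{Pos}^{ko}_n(B\pi,B\pi^\infty)$, and Theorem~\ref{thm-pos} then supplies the collared positive scalar curvature metric. The compatibility you flag in step~(1) between the analytic relative higher index and $A\circ p\circ D_\ast$ is indeed taken for granted in the paper rather than verified, so your caution there is warranted but does not represent a divergence from the paper's route.
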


%\mymar{injection}

\begin{thm}
Let $n\ge6$. Let $N^n$ be a manifold with boundary such that
$\pi_1(N)$ and $\pi_1(\partial N)$ are both amenable. Suppose that $\pi_(\partial N)\to 
\pi_1(N)$ is an injection and that
the cohomological dimensions of $\pi_1(N)$ and $\pi_1(\partial N)$ are
less than $n$.  If the
classifying spaces $B\pi_1(N)$ and $B\pi_1(\partial N)$
are finite complexes, then the
 Relative Gromov-Lawson-Rosenberg conjecture  holds for the
pair $(N, \partial N)$.
\end{thm}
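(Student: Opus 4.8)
\bigskip\noindent
The plan is to deduce the theorem from the corollary just proved, which reduces the Relative Gromov-Lawson-Rosenberg conjecture for $(N,\partial N)$ to the injectivity of the two maps
\begin{gather*}
p\colon ko_n(B\pi,B\pi^\infty)\to KO_n(B\pi,B\pi^\infty),\\
A\colon KO_n(B\pi,B\pi^\infty)\to KO_n(C^\ast(\pi,\pi^\infty)),
\end{gather*}
where $\pi=\pi_1(N)$ and $\pi^\infty=\pi_1(\partial N)$. By hypothesis $B\pi$ and $B\pi^\infty$ admit finite, hence compact, models; replacing $B\pi$ by the mapping cylinder of $B\pi^\infty\to B\pi$ we may take $B\pi^\infty\subseteq B\pi$, and then the constructions of this section all apply to the pair. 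Since $\pi^\infty\to\pi$ is moreover injective, the reduced relative group $C^\ast$-algebra $C^\ast_{red}(\pi,\pi^\infty)$ is defined, and $KO_n(B\pi,B\pi^\infty)$ is the relative $KO$-homology group built above from the localization-algebra mapping cone (via Proposition \ref{yu-localization} and the relative long exact sequence). It thus suffices to prove the two injectivity statements.

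\bigskip\noindent
For $A$ I would in fact prove that it is an isomorphism. Amenable groups are $K$-amenable (Cuntz \cite{Cuntz-1983}), and, being a-T-menable, they satisfy the Baum-Connes conjecture by the Higson-Kasparov theorem; hence $\pi$ and $\pi^\infty$ meet the hypotheses of Theorem \ref{amenable}. As $\pi^\infty$ injects into $\pi$, part (2) of that theorem shows that the reduced relative Baum-Connes map $\mu_{red}\colon KO_n(B\pi,B\pi^\infty)\to KO_n(C^\ast_{red}(\pi,\pi^\infty))$ is an isomorphism. The remaining point is to identify $\mu_{red}$ with the Rosenberg-Stolz assembly map $A$; this is the relative counterpart of the standard identification of the localization-algebra assembly map with the classical Baum-Connes assembly map, and it follows from that absolute statement together with the naturality of assembly with respect to the mapping-cone construction. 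Hence $A$ is an isomorphism, in particular injective.

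\bigskip\noindent
For $p$ the argument is a dimension count, and here I use homotopy invariance of $ko_\ast$ and $KO_\ast$ to pass to different (possibly infinite) models. Since $\mathrm{cd}(\pi)\le n-1$ and $n\ge6$, the Eilenberg-Ganea theorem gives geometric dimension at most $\max(3,\mathrm{cd}(\pi))\le n-1$, so $B\pi$ has a CW model of dimension at most $n-1$, and likewise for $B\pi^\infty$; realizing $\pi^\infty\to\pi$ by a cellular map $f$, the mapping cone $C_f$ has dimension at most $n$, and $ko_\ast(B\pi,B\pi^\infty)$, $KO_\ast(B\pi,B\pi^\infty)$ are the reduced $ko$- and $KO$-homologies of $C_f$. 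Now $ko$ is the connective cover of $KO$, so the fibre of the canonical map $ko\to KO$ has homotopy groups vanishing in degrees $\ge -1$; comparing Atiyah-Hirzebruch spectral sequences then shows that $\widetilde{ko}_m(T)\to\widetilde{KO}_m(T)$ is an isomorphism for every finite-dimensional complex $T$ and every $m\ge\dim T$. Taking $T=C_f$ and $m=n\ge\dim C_f$ gives that $p$ is an isomorphism in degree $n$, hence injective.

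\bigskip\noindent
With $p$ and $A$ both injective and $n\ge6$, the corollary above yields the Relative Gromov-Lawson-Rosenberg conjecture for $(N,\partial N)$. I expect the main obstacle to be the identification of the Rosenberg-Stolz assembly map $A$ with the relative Baum-Connes map $\mu_{red}$ of this paper, since that is what makes Theorem \ref{amenable} applicable; by contrast, the dimension bookkeeping for $p$ (geometric versus cohomological dimension, and the dimension of $C_f$) is routine.
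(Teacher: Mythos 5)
Your proposal is correct and follows essentially the same route as the paper: injectivity (in fact bijectivity) of $p$ via an Atiyah--Hirzebruch comparison controlled by the cohomological dimension hypothesis (the paper compares the two relative spectral sequences directly, you equivalently use the fibre of $ko\to KO$ on a low-dimensional model of the mapping cone), and bijectivity of $A$ via Higson--Kasparov/$K$-amenability for amenable groups as in Theorem \ref{amenable}, followed by the preceding corollary. Your explicit flagging of the identification of $\mu_{red}$ with the Rosenberg--Stolz assembly map is a point the paper passes over silently.
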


%\mymar{for $q\ge0$}

\begin{proof}
Let $A=\pi_1(N)$ and $B=\pi_1(\partial N)$. 
The $E^2$ term for the Atiyah-Hirzebruch spectral sequence for
$KO_n(K(A,1), K(B, 1))$  is $H_p(A, B; KO_q)$. Similarly the $E^2$
term for 
$ko_n(K(A,1), K(B, 1))$ is $H_p(A, B; ko_q)$. The groups coincide when $q\ge0$.
There is a comparison map between the spectral sequences
from the $ko$-sequence to the $KO$-sequence which is an isomorphism
on $E^2$ for $q\ge0$. The reason that this map may fail to be an isomorphism on
$E^\infty$ is that there are differentials for the $KO$-sequence
that can start in the fourth quadrant and end in the first.
For this reason, a nonzero element in $ko_n$ can vanish in $KO_n$.
 But if $n > \max\{cd(A), cd(B)\}$, differentials can only come
from the line $p+q = n+1$ with $p\le \max\{cd(A), cd(B)\}$. But then $q$ is positive and
the map is therefore an isomorphism.

%\mymar{SW: It is not an IM on $E^2$; only for $q\ge0$}
%\mymar{SW: the relative cd can increase by 1; but the line for $p=0$
%line is empty anyway, so the result works but the argument should be adjusted.}
%\mymar{SW: What exactly is the formula for this comparsion map $ko\to KO$?}

\medskip\noindent
Using Higson-Kasparov \cite[Theorem 1.1]{Higson-Kasparov-2001} extended into the
$KO$ setting,
we see that the $KO$-theory groups of $C^\ast_{max}(\pi)$ and
$C^\ast_{max}(\pi^\infty)$ are given by
the $KO$-theories of their classifying spaces. Thus the relative
assembly map $A\colon KO_n(B\pi, B\pi^\infty)\to KO_n(C^\ast
(\pi_1, \pi_1^\infty))$
 is an isomorphism. The rest of the proof is as the last paragraph
 of Theorem \ref{thm-pos}.
 \end{proof}

\begin{rem}
 This unstable version of Conjecture \ref{conjecture-relative-glr} for large $n$
obviously implies the stable version of the conjecture for all $n$.
\end{rem}
%
%\begin{proof}
%Under the conditions of finiteness, the map
%$p\colon ko_n(B\pi_1(N), B\pi_1(\partial N))\to KO_n(B\pi_1(N), B\pi_1(\partial N))$
%is an injection.
%By Theorem \ref{amenable}, the amenability condition implies that
% $A\colon KO_n(B\pi_1(N), B\pi_1(\partial N))\to KO_n(C^\ast(\pi_1(N), \pi_1(
%\partial N)))$
%is also an injection.
%\end{proof}

\bigskip
\section{A new index theory for noncompact manifolds}
\label{sec:exhaustion}

\bigno
In this section we will develop a new index theory for a noncompact manifold.
Our index theory will depend on a choice of an exhaustion.

\begin{defn}
 Let $(Y, d)$ be a noncompact, complete metric space. Suppose that
$Y$ is also metrically locally simply connected; i.e.~for all $\varep>0$ there is
$\varep'\le\varep$ such that every ball in $X$ of radius $\varep'$ is simply connected.
Let   $Y_1\subseteq Y_2\subseteq Y_3\subseteq\cdots$
 be a sequence of connected compact subsets of $Y$. We say that $\{Y_i\}$ is an  {\it admissible
exhaustion} if
\begin{enumerate}
\item $Y=\bigcup_{i=1}^\infty Y_i$;
\item for each $j>i$, there is a connected compact subset $Y_{i,j}\subseteq Y$
such that $Y_j= Y_{i,j}\cup Y_i$ and $Y_{i,j}\cap Y_i=\partial Y_i$, where $\partial Y_i
=Y_i-\mathring{Y}_i$ for all $i$ and $\mathring{Y}_i$ denotes the interior of $Y_i$;
\item $d(\partial Y_i, \partial Y_j)\to \infty$ as $|j-i|\to\infty$. 
\end{enumerate}
Often we will write $\{Y_i; Y_{i,j}\}$ for the exhaustion.
\end{defn}

\noindent
Let $\{Y_i; Y_{i,j}\}$ be an admissible exhaustion of $Y$.
Define $D^\ast_i$ to be the $C^\ast$-algebra inductive limit
given by
\begin{equation*}
D^\ast_i\equiv\lim_{j\to\infty,\, j>i} C^\ast_{max}(\pi_1(Y_j), \pi_1(Y_{i,j}))\otimes \calk,
\end{equation*}
where $\calk$ is the $C^\ast$-algebra of compact operators.
Let $$ \prod_{i=1}^\infty D_i^\ast = \left\{(a_1, a_2, \ldots) \colon a_i \in D_i^\ast,   ~\sup_i||a_i||<\infty\right\} .$$
There
is a natural homomorphism $\rho_{i+1}\colon D^\ast_{i+1}\to D_i^\ast$
induced by the group homomorphisms given by inclusions of the corresponding
spaces. Let $\rho$ be the homomorphism from $ \prod_{i=1}^\infty D_i^\ast$
 to $ \prod_{i=1}^\infty D_i^\ast$
mapping $(a_1, a_2, \ldots)$ to $(\rho_2(a_2), \rho_3 (a_3), \ldots)$.

\bigno
We now define the $C^\ast$-algebra $A(Y)$ by:
\begin{equation*}
A(Y)\equiv \left\{a \in C\left([0, 1], \prod_{i=1}^\infty D_i^\ast\right)\colon
\rho(a(0))=a(1) \right\}.
\end{equation*}
Notice that $A(Y)$ is the $C^\ast$-algebra inverse
limit of the $D_i^\ast$ in a certain homotopical sense. In particular, this
$C^\ast$-algebra encodes dynamical information about how the
fundamental groups of the pieces of the exhaustion interact with each other. We emphasize
that the definition of $A(Y)$ depends on the exhaustion $\{Y_i\}$ of $Y$.
 We will now define an index map
$\sigma\colon  KO^{lf}_\ast(Y)=KO^0(C_0(Y))\to KO_\ast(A(Y))$.

\bigno
There exists $\varep_0>0$  such that, for any closed subspace $Z$
of $Y$,  any operator on a $Z$-module with propagation less than or equal
to $\varep_0>0$ can be lifted to the universal cover of $Z$.
One can prove that the above constant $\varep_0$ exists because $Y$ is metrically
locally simply connected (as defined in the beginning of Section 2). The proof is similar
to that of Proposition \ref{prop-lifting}.

\medskip\noindent
If an operator $F$ represents  a class in $KO_0^{lf}(Y)$, for each $\varep <\frac{\varep_0}{100}$,
 we can choose another operator $F_\varep$ representing the same $KO$-class such that
the propagation of $F_\varep$ is smaller than   $\varep.$
Let $$\ind_L ([F_\varep]) = [P_{F_\varep}]- \left[\almostzero\right]
\in KO_0 (C_L^\ast(Y)),$$ where $P_{F_\varep}$ is the idempotent in the matrix
algebra of $C_L^\ast(Y)^+$
as given in the definition of the local index
whose propagation is less than $100\varep<\varep_0$.

\noindent
Let $P_{F_\varep}^{(j)}=\chi_{Y_j} P_{F_{\varep}} \chi_{Y_j}$ and
let $\widetilde{P}_{F_\varep}^{(j)}$ be the lifting of $P_{F_\varep}^{(j)}$ to $\widetilde{Y}_{j}$, the universal cover of $Y_j$.
Let $v$ be an invertible element in the matrix algebra of $C_0(\reals^7)^+$ representing
the generator in $KO_{-1}(C_0(\reals^7))\cong KO_0(C_0(\reals^8))$
(see Atiyah \cite{Atiyah-1966} or Schr\"oder \cite[Proposition 1.4.11]{Schroeder-1993}). Let
$\tau_{F_\varep}^{(j)}=v\otimes \widetilde{P}^{(j)}_{F_\varep} +I\otimes(I-
\widetilde{P}^{(j)}_{F_\varep})$
and let $(\tau^{-1}_{F_\varep})^{(j)} = 
v^{-1}\otimes \widetilde{P}^{(j)}_{F_\varep} +I\otimes(I-
\widetilde{P}^{(j)}_{F_\varep})$.
For all $s\in [0,1]$, define $w^{(j)}_{F_\varep}(s)$ to be the product
\begin{equation*}
\left(
\begin{array}{cc}
I& (1-s)\tau^{(j)}_{F_\varep}\\0&I
\end{array}
\right)
\left(
\begin{array}{cc}
I& 0\\-(1-s)(\tau^{-1}_{F_\varep})^{(j)}&I
\end{array}
\right)
\left(
\begin{array}{cc}
I& (1-s)\tau^{(j)}_{F_\varep}\\0&I
\end{array}
\right)
\left(
\begin{array}{rr}
0& -I\\I&0
\end{array}
\right).
\end{equation*}

\bigno
For each $k$, there exist $j_k>k$ and a sequence 
of positive numbers $\{\varep_k\}$ converging to $0$ such that $100\varep_k<\varep_0$  and $y_k=w^{(j_k)}_{F_{\varep_k}}(0)\almostzero (w^{(j_k)}_{F_{\varep_k}}(0))^{-1}$ has propagation less than $\varep_0$ for
all $k$, and there is $z_k$ in the matrix algebra of 
$(S^7C_{max}^\ast(  \widetilde{Y}_{k, j_k} ) ^{\pi_1 (Y_{k, j_k})})^+$
such that $y_k=\phi_{k, j_k}(z_k)$,  where $\phi_{k, j_k}  $ is the $\ast$-homomorphism
from the matrix algebra
of $(S^7C_{max}^\ast(  \widetilde{Y}_{k, j_k} ) ^{\pi_1 (Y_{k, j_k})})^+$ to  
the matrix algebra of $(S^7C_{max}^\ast(
\widetilde{Y}_{j_k} )^{\pi_1 (Y_{j_k})})^+$. 
Note that the existence of such a $\ast$-homomorphism follows from Lemma \ref{big-lemma} and
Formula \ref{psi-max-formula}.
The existence of such $z_k$ is a result of the
following claim.

%\medskip\noindent
%Let $$q(x)=\sum_{n=0}^{m}\frac{x^n}{n!}.$$ Choose $m$ to be large enough such that
%$\chi_{Y_{k, j_k}}\exp(2\pi i P_{F_{\varep_k}}^{(j_k)})\chi_{Y_{k, j_k}} $ is
%sufficiently close to
%$\chi_{Y_{k, j_k}}q(2\pi i P_{F_{\varep_k}}^{(j_k)})\chi_{Y_{k, j_k}} $ in norm
%(say within
%$\frac{1}{10^{10}}$).

\begin{claim}
\label{long-claim2}
Let $\widetilde{Y}_{j_k}$ be the universal cover of 
$Y_{j_k}$ and let $\pi_k\colon \widetilde{Y}_{j_k}\to Y_{j_k}$ be the covering map. Then
we have
 \begin{equation*}
y_k
=\chi_{\pi^{-1}_k(Y_{k, j_k})}
y_k\chi_{\pi^{-1}_k(Y_{k, j_k})}\oplus \chi_{\widetilde{Y}_{j_k}-\pi^{-1}_k(Y_{k, j_k})}
\end{equation*}
 when $k$ and $j_k$ are sufficiently large.
\end{claim}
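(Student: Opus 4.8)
The plan is to follow the computation in the proof of Claim~\ref{long-claim1}, now carried out inside the universal cover $\widetilde Y_{j_k}$, with the role played there by the cylinder replaced by the outer cut $\partial Y_{j_k}$. Write $\alpha = \tau^{(j_k)}_{F_{\varep_k}}$ and $\beta = (\tau^{-1}_{F_{\varep_k}})^{(j_k)}$. As in that proof, $y_k = w^{(j_k)}_{F_{\varep_k}}(0)\,\almostzero\,(w^{(j_k)}_{F_{\varep_k}}(0))^{-1}$ equals
\begin{equation*}
\left(\begin{array}{cc}(2\alpha-\alpha\beta\alpha)\beta & (2\alpha-\alpha\beta\alpha)(I-\beta\alpha)\\ (I-\beta\alpha)\beta & (I-\beta\alpha)^2\end{array}\right),
\end{equation*}
and $(2\alpha-\alpha\beta\alpha)\beta = \alpha(I-\beta\alpha)\beta + (\alpha\beta-I) + I$. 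Hence $y_k - \almostzero$ is a $2\times2$ matrix each of whose entries is a product in which one factor is $\alpha\beta - I$ or $\beta\alpha - I$ and the remaining (at most two) factors are $\alpha$ or $\beta$, each of propagation less than $100\varep_k$.

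The heart of the argument is to localize these defect factors near $\partial Y_{j_k}$. Since $v$ is invertible and commutes with $\widetilde P^{(j_k)}_{F_{\varep_k}}$ (they act on different tensor factors), expanding the products gives $\alpha\beta - I = \beta\alpha - I = (2I - v - v^{-1})\otimes\big((\widetilde P^{(j_k)}_{F_{\varep_k}})^2 - \widetilde P^{(j_k)}_{F_{\varep_k}}\big)$. Now $P_{F_{\varep_k}}$ is a genuine idempotent of propagation less than $100\varep_k$, so $(P^{(j_k)}_{F_{\varep_k}})^2 - P^{(j_k)}_{F_{\varep_k}} = -\,\chi_{Y_{j_k}}\,P_{F_{\varep_k}}\,\chi_{Y\setminus Y_{j_k}}\,P_{F_{\varep_k}}\,\chi_{Y_{j_k}}$, whose kernel is supported on pairs of points each lying within $100\varep_k$ of $\partial Y_{j_k}$. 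Because $100\varep_k<\varep_0$ and $\varep_k\to0$, lifting to $\widetilde Y_{j_k}$ is multiplicative on such small-propagation operators (Proposition~\ref{prop-lifting} and the remark following it), so $(\widetilde P^{(j_k)}_{F_{\varep_k}})^2 - \widetilde P^{(j_k)}_{F_{\varep_k}}$ is the lift of that operator and has row and column support within $100\varep_k$ of $\pi_k^{-1}(\partial Y_{j_k})$. Multiplying by the at most two factors $\alpha,\beta$ then shows that every entry of $y_k - \almostzero$ has row and column support within $C\varep_k$ of $\pi_k^{-1}(\partial Y_{j_k})$ for a universal constant $C$.

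It remains to invoke the admissibility axioms. From $Y_{j_k} = Y_{k,j_k}\cup Y_k$ with $Y_{k,j_k}\cap Y_k = \partial Y_k$ one checks that $Y_k\subseteq\mathring Y_{j_k}$, that $Y_{k,j_k}$ and $\mathring Y_k$ partition $Y_{j_k}$, and hence that $\partial Y_{j_k}\subseteq Y_{k,j_k}$; moreover every path in $Y$ from $\mathring Y_k$ to $\partial Y_{j_k}$ must cross $\partial Y_k$, so $d(\mathring Y_k,\partial Y_{j_k})\ge d(\partial Y_k,\partial Y_{j_k})$, which tends to $\infty$ as $j_k-k\to\infty$ by condition~(3) in the definition of an admissible exhaustion. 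Thus, for $k$ large (so that $\varep_k$ is small) and $j_k$ large relative to $k$ (so that $C\varep_k<d(\partial Y_k,\partial Y_{j_k})$), the supports found above are contained in $\pi_k^{-1}(Y_{k,j_k})$ and disjoint from $\pi_k^{-1}(\mathring Y_k)=\widetilde Y_{j_k}-\pi_k^{-1}(Y_{k,j_k})$, i.e.\ $\chi_{\widetilde Y_{j_k}-\pi_k^{-1}(Y_{k,j_k})}(y_k - \almostzero) = (y_k - \almostzero)\chi_{\widetilde Y_{j_k}-\pi_k^{-1}(Y_{k,j_k})} = 0$. Since $\almostzero$ commutes with every characteristic function of a subset of $\widetilde Y_{j_k}$, writing $y_k = \almostzero + (y_k-\almostzero)$ and splitting the module along the partition $\pi_k^{-1}(Y_{k,j_k})\sqcup\pi_k^{-1}(\mathring Y_k)$ yields the asserted direct sum. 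The main obstacle is the middle paragraph: checking that the idempotent-defect sits at the outer cut $\partial Y_{j_k}$ rather than near $\partial Y_k$, and carrying that localization faithfully through the matrix formula; once that is done, the geometry used is precisely conditions~(2) and~(3) of admissibility.
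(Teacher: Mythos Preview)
Your proof is correct and follows essentially the same approach as the paper, which simply states that the argument is identical to that of Claim~\ref{long-claim1}. Your write-up is in fact more explicit than the paper's: the identity $\alpha\beta - I = \beta\alpha - I = (2I - v - v^{-1})\otimes\big((\widetilde P^{(j_k)}_{F_{\varep_k}})^2 - \widetilde P^{(j_k)}_{F_{\varep_k}}\big)$ cleanly isolates the idempotent-defect as the source of the off-block terms, whereas in Claim~\ref{long-claim1} the paper argues more directly from the cutting formulas and the propagation bound. One small caveat: your inequality $d(\mathring Y_k,\partial Y_{j_k})\ge d(\partial Y_k,\partial Y_{j_k})$ (and the related statement that points within $100\varep_k$ of $Y\setminus Y_{j_k}$ lie within $100\varep_k$ of $\partial Y_{j_k}$) uses a path-metric/connectedness argument, which is fine in the geometric settings the paper cares about but is worth flagging as an implicit assumption on $(Y,d)$.
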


\begin{proof}
This proof is identical to that of Claim \ref{long-claim1}.
% but we include it for
%completeness.
%By the construction of $P_{F_{\varep_k}}$ we have $||P_{F_{\varep_k}}||\le 10$
%for all $k$.  As a result, we have
%$||P_{F_{\varep_k}}^{(j_k)}||\le 10$. Let $q$ be a polynomial such that $|\exp(2\pi ix)-q(x)|<
%\frac{\delta}{100}$ when $|x|\le 10$. By the fact that $P_{F_{\varep_k}}$ has propagation at most
%$\varep_0$, there exists $N_{\varep_0}>0$ such that 
%\begin{equation*}
%\chi_{Y_{k, j_k}}q(P_{F_{\varep_k}}^{(j_k)})\chi_{Y_{k, j_k}}=\chi_{Y_{k, j_k}}q(P_{F_{\varep_k}})\chi_{Y_{k, j_k}}
%\end{equation*}
%for all $n>N_{\varep_0}$. On the other hand, using the fact that $P_{F_{\varep_k}}$ is an
%idempotent, we have $\exp(2\pi iP_{F_{\varep_k}})=I$. Therefore, for $n>N_{\varep_0}$, we have 
%\begin{align*}
%||\chi_{Y_{k, j_k}}(\exp(2\pi iP_{F_{\varep_k}}^{(j_k)})-I)\chi_{Y_{k, j_k}}|| &<
%\frac{\delta}{100}+ ||\chi_{Y_{k, j_k}}(q(P_{F_{\varep_k}}^{(j_k)}-I)\chi_{Y_{k, j_k}}||\\
%&=\frac{\delta}{100} + ||\chi_{Y_{k, j_k}}(q(P_{F_{\varep_k}})-I)\chi_{Y_{k, j_k}}||\\
%&=\frac{\delta}{100}+  ||\chi_{Y_{k, j_k}}(q(P_{F_{\varep_k}})-\exp(2\pi iP_{F_{\varep_k}}))\cfhi_{Y_{k, j_k}}
%  + \chi_{Y_{k, j_k}}(\exp(2\pi iP_{F_{\varep_k}}-I)\chi_{Y_{k, j_k}}||\\
%&=\frac{\delta}{100}+ ||\chi_{Y_{k, j_k}}(q(P_{F_{\varep_k}})-\exp(2\pi iP_{F_{\varep_k}}))\chi_{Z_n'}||\\
%&<\frac{\delta}{10}. \qedhere
%\end{align*} 
\end{proof}

\medskip\noindent
Let $\lambda\in[0,1]$.
We define $z'_k(\lambda)$ by replacing $P_{F_{\varep_k}}^{(j_k)}(0)$ with
 $(1-\lambda) P_{F_{\varep_k}}^{(j_k)}(0)+\lambda P_{F_{\varep_{k+1}}}^{(j_{k+1})}(0)$
in the definition of  $z_k$. Define $y_k'(\lambda)=\phi_{k,j_k}(z_k'(\lambda))$.
Let $\psi_k$ be the natural homomorphism
$\psi_k\colon S^7C_{max}^\ast(\widetilde{Y}_{j_k})^{\pi_1(Y_{j_k})}
\to S^7C_{max}^\ast(\widetilde{Y}_{j_{k+1}})^{\pi_1(Y_{j_{k+1}})}$.
Let 
\begin{equation*}
\tau_k(\lambda)=v\otimes \left((1-\lambda)\psi_k(\widetilde{P}^{(j_k)}_{F_{\varep_k}}(0))
+\lambda \widetilde{P}^{(j_{k+1})}_{F_{\varep_{k+1}}}(0)\right) +I\otimes (I-
 \left((1-\lambda)\psi_k(\widetilde{P}^{(j_k)}_{F_{\varep_k}}(0))+\lambda
\widetilde{P}^{(j_{k+1})}_{F_{\varep_{k+1}}}(0))\right)
\end{equation*}
and 
\begin{equation*}
\tau'_k(\lambda)=v^{-1}\otimes \left((1-\lambda)\psi_k(\widetilde{P}^{(j_k)}_{F_{\varep_k}}(0))
+\lambda \widetilde{P}^{(j_{k+1})}_{F_{\varep_{k+1}}}(0)\right) +I\otimes (I-
 \left((1-\lambda)\psi_k(\widetilde{P}^{(j_k)}_{F_{\varep_k}}(0))+\lambda
\widetilde{P}^{(j_{k+1})}_{F_{\varep_{k+1}}}(0))\right)
\end{equation*}
for all $\lambda \in [0,1]$. Again, the existence of $\psi_k$ follows from Lemma \ref{big-lemma}
and Formula \ref{psi-max-formula}.
For all $s, \lambda\in [0,1]$, define $(w_k(s))(\lambda)$ to be the product
\begin{equation*}
\left(
\begin{array}{cc}
I& (1-s)\tau_k(\lambda)\\0&I
\end{array}
\right)
\left(
\begin{array}{cc}
I& 0\\-(1-s)\tau'_k(\lambda)&I
\end{array}
\right)
\left(
\begin{array}{cc}
I& (1-s)\tau_k(\lambda)\\0&I
\end{array}
\right)
\left(
\begin{array}{rr}
0& -I\\I&0
\end{array}
\right).
\end{equation*}

\medskip\noindent
Define
\begin{equation*}
(c_k(s))(\lambda) =(w_k(s))(\lambda)\almostzero ((w_k(s))(\lambda))^{-1}.
\end{equation*}

\noindent 
By the definition of $z_k'$ and $c_k$, 
for each $\lambda\in[0,1]$,
the pair $(z'_k(\lambda), (c_k ( \cdot) )(\lambda))$ lies in $(S^7D_k^\ast)^+$, 
where $ D_k^\ast$ is as in the definition of $A(Y)$.
Let $a_k= (z_k', c_k)$.  We finally define the index of $F$ in $KO_0(A(Y))$ to be
$$\sigma([F])=[(a_1, a_2, \ldots)]-\left[\almostzero\right]\in  KO_0(A(Y)).$$
One can similarly define the index map $\sigma\colon  KO_n^{lf}(Y)\to KO_n(A(Y))$ when
$n\not\equiv  0 \modd 8$ with the help of suspensions
 (refer back to the section after
Theorem \ref{amenable}).

\bigskip
\noindent
The proof of the following
vanishing theorem contains some of the same elements as are found in
 Section 2, but now in the context of a noncompact manifold $M$.

\begin{thm}
\label{index-theorem-two}
Let $Y$ be a noncompact space with an admissible exhaustion $\{Y_i\}$. Let
$M$ be a noncompact manifold. Assume that there is a uniformly continuous proper coarse
 map $f\colon M\to Y$ with
an admissible exhaustion $\{M_i; M_{i,j}\}$ of $M$ such that each $M_i$ is a compact manifold
with boundary $\partial M_i$, $f^{-1}(Y_i)= M_i$, $f^{-1}(Y_{i,j})=M_{i,j}$ and
$f^{-1}(\partial Y_i)=\partial M_i$.
Suppose that $M$ is spin and let $D_M$ be the Dirac operator on $M$.
If $M$ admits a metric of uniform positive scalar curvature, then the index
$\sigma(f_\ast[D_M])$ of $D_M$
 is zero in $KO_\ast(A(Y))$, where $f_\ast\colon KO_\ast^{lf}(M)\to KO_\ast^{lf}(Y)$ is
the homomorphism induced by $f$.
\end{thm}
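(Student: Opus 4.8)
The plan is to reduce the vanishing of $\sigma(f_\ast[D_M])$ to the component-wise vanishing statements already established in Section~2, carried along the exhaustion. First I would fix the metric of uniform positive scalar curvature on $M$, with $\kappa\ge\kappa_0>0$ everywhere, and recall that by the Lichnerowicz formula $D_M^2 = \nabla^\ast\nabla + \frac{\kappa}{4}$, so $D_M$ is invertible with $\|D_M^{-1}\|\le 2/\sqrt{\kappa_0}$. Consequently, for the chopping function $f$ used to build $F_{D_M}$, the operator $P_{F_{D_M}}$ is close to $\almostzero$: more precisely, after rescaling to $uD_M$ and letting $u\to\infty$, the idempotent $P_{uD_M}^{(j)}(0)$ converges in operator norm to $\almostzero$ uniformly, exactly as in the last paragraph of the proof of Theorem~\ref{index-theorem}. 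The key point is that this convergence is \emph{uniform over the exhaustion} because the scalar curvature lower bound $\kappa_0$ is global; this is where the hypothesis of \emph{uniform} (rather than merely pointwise) positive scalar curvature is essential.

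Next I would unwind the definition of $\sigma$ along the exhaustion. For each $k$ we have the truncated idempotents $\widetilde P^{(j_k)}_{F_{\varep_k}}(0)$ on $\widetilde Y_{j_k}$ (pulled back via $f$ from $M_{j_k}$, using the hypotheses $f^{-1}(Y_i)=M_i$, etc., so that the Dirac operator on $M$ pushes forward compatibly), and the associated $\tau^{(j_k)}$, $w^{(j_k)}(s)$, $z_k$, $y_k=\phi_{k,j_k}(z_k)$, and finally $a_k=(z_k',c_k)\in (S^7D_k^\ast)^+$. The class $\sigma(f_\ast[D_M])$ is $[(a_1,a_2,\ldots)]-[\almostzero]$ in $KO_0(A(Y))$. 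I would first handle the "space-variable" $[0,1]$-direction of $A(Y)$: since each $a_k$ is assembled from the same homotopy-formula machinery used in Theorem~\ref{index-theorem} (the $w$-matrices, the $v\otimes P + I\otimes(I-P)$ construction), the computation showing $y_{n,u}\to I$ in operator norm as $u\to\infty$ applies verbatim here to show $y_k$, and then $z_k$, converge to $I$ (equivalently to $\exp(2\pi i\almostzero)$) in operator norm. Because the convergence is uniform in $k$ by the global curvature bound, the sequence $(z_1,z_2,\ldots)$ lies in the connected component of the identity in the relevant $\prod_k/\bigoplus_k$ quotient, and likewise the homotopy $c_k$ becomes trivializable, so $[(a_1,a_2,\ldots)]=[\almostzero]$.

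The main obstacle, and the step requiring the most care, is making the uniformity genuinely work across \emph{both} limiting processes simultaneously: one must choose the rescaling parameters $u_k\to\infty$ and the truncation indices $j_k$ (together with the propagation cutoffs $\varep_k\to 0$, subject to $100\varep_k<\varep_0$) in a compatible diagonal fashion so that, for each $k$, $P_{u_k\widetilde D}^{(j_k)}(0)$ is within $1/k$ of $\almostzero$ \emph{and} $j_k$ is large enough that Claim~\ref{long-claim2} applies (so $y_k$ decomposes as an operator supported on $\pi_k^{-1}(Y_{k,j_k})$ plus the identity elsewhere). This is possible precisely because the estimate $\|P_{uD}^{(n)}(0)-\almostzero\|\to 0$ holds uniformly in $n\ge N_u$ — a consequence of the uniform scalar curvature bound and the finite-propagation estimate $\mathrm{prop}(P_{uD})\le 100u$ — so one can first pick $u_k$ to control the norm, then pick $j_k\ge N_{u_k}$ large enough for the support claim, then pick $\varep_k$ accordingly. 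Once the diagonal sequence is arranged, the operator-norm homotopy from $(a_1,a_2,\ldots)$ to $(\almostzero,\almostzero,\ldots)$ inside $(S^7 A(Y))^+$ (respecting the $\rho$-compatibility condition $\rho(a(0))=a(1)$, which is preserved because the homotopy is natural with respect to the inclusion-induced maps $\rho_{i+1}$) gives $\sigma(f_\ast[D_M])=0$ in $KO_0(A(Y))$; the non-zero-mod-$8$ cases follow by the usual suspension argument.
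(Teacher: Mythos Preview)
Your overall strategy matches the paper's: rescale the Dirac operator, use Lichnerowicz to force the truncated idempotents $P_{\alpha D_M}^{(j)}(0)$ uniformly close to $\almostzero$, and conclude that the explicit representative of $\sigma(f_\ast[D_M])$ is homotopic to the trivial one inside $A(Y)$. The emphasis on uniformity in $k$ coming from the \emph{global} lower bound $\kappa\ge\kappa_0$ is exactly right.

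There is, however, a genuine technical gap in your diagonal argument. You propose to ``first pick $u_k$ to control the norm, then pick $j_k\ge N_{u_k}$ large enough for the support claim, then pick $\varep_k$ accordingly.'' But rescaling $D_M\mapsto u_kD_M$ \emph{increases} the propagation of $F=f(u_kD_M)$ and of $P_{u_kD_M}$ to order $u_k$ (as you yourself note, $\mathrm{prop}(P_{uD})\le 100u$), whereas the construction of $\sigma$ requires operators of propagation at most the fixed constant $\varep_0$ in order to lift them to the universal covers $\widetilde{Y}_{j_k}$ and $\widetilde{Y}_{k,j_k}$. These two requirements pull in opposite directions, and ``pick $\varep_k$ accordingly'' does not resolve the tension: by the uncertainty principle you cannot choose a chopping function whose Fourier transform has arbitrarily small support while remaining uniformly close to $\pm 1$ outside a fixed spectral gap.

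The paper handles this with a device you do not mention: the factorization
\[
\exp(itD_M)=\underbrace{\exp\!\left(\tfrac{it}{n_0}D_M\right)\cdots\exp\!\left(\tfrac{it}{n_0}D_M\right)}_{n_0},
\]
which expresses $P_F$ (and hence the lifted idempotent) as a product of operators each of propagation $<\varep_0$. These small-propagation factors can be lifted individually and then multiplied upstairs. When one rescales to $\alpha D_M$, one simply replaces $n_0$ by $[\alpha n_0]+1$, keeping each factor's propagation below $\varep_0$ while the total propagation grows; simultaneously one enlarges the truncation index to $j_{k,\alpha}'$ with $d(M\setminus M_{j_{k,\alpha}'},M_{j_k})>10\alpha n_0\varep_0$. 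This is the missing ingredient that makes your ``compatible diagonal'' actually exist. Once you insert it, the rest of your outline (uniform convergence of $\tau_{k,\alpha}$ to $v\otimes\almostzero+I\otimes\almostzerotwo$, hence of the representative to the trivial idempotent, compatibly with the maps $\rho_{i+1}$) goes through as you describe.
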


\begin{proof}
We assume that the dimension of $M$ is $0\modd 8$. The other cases can be
handled in a similar way with the help of suspensions
 (refer back to the section after
Theorem \ref{amenable}).

\medskip\noindent
Let $Y_i$, $M_i$, $Y_{i,j}$ and $M_{i,j}$ be given as in the statement of
the theorem. In this proof, all $C^\ast$-algebras are the maximal ones.

\bigno
Let $f$ be an odd smooth chopping function  on the real line satisfying the following conditions:
(1) $f(x)\rightarrow \pm 1$ as $x \rightarrow \pm \infty$; (2) $g=f^2-1 \in S(\mathbb{R})$, the space of Schwartz functions,
 (3) if $\widehat{f}$ and $\widehat{g}$ are the Fourier transforms
of $f$ and $g$, respectively, then
$\supp (\widehat{f})\subseteq [-1, 1]$ and $\supp (\widehat{g} ) \subseteq [-2,2]$.
As stated earlier, such an odd chopping
function exists (cf.~Roe \cite[Lemma 7.5]{Roe-1988}).

\bigno Let $D_M$ be the Dirac operator on $M$. 
We define $$F=
f( D_M )= \frac{1}{2\pi} \int_{-\infty}^{\infty}
 \widehat{f}(t) \exp(itD_M) dt .$$
Let $$\ind_L ([F]) = [P_{F}]- \left[\almostzero\right]
\in KO_0 (C_L^\ast(  M  )),$$ where $P_{F}$ is the idempotent
 in the matrix algebra of  $(C_L^\ast(M))^+$ as given in the definition of the local index.

\bigno
Recall that there
exists $\varep_0>0$  such that, for any closed subspace $Z$
of $Y$,  any operator on a $Z$-module with propagation less than or equal
to $\varep_0$ can be lifted to the universal cover of $Z$.
Define $P_F^{(j)}= \chi_{M_j} P_F \chi_{M_j},$ where $ \chi_{M_j}$ is the characteristic function of $M_j$.
 Let $n_0$ be the smallest  natural number such that $n_0> \frac{2}{\varep_0}.$
We write 
\begin{equation*}
\exp(itD_M)=\underbrace{
\exp\left(\frac{it}{n_0}D_M\right) \cdots \exp\left(\frac{it}{n_0}D_M\right)}_{n_0}.
\tag{$\ast$}
\end{equation*}
Let $j'>j$ be the smallest integer such that $$d(M-M_{j'}, M_j)> 10n_0 \varep_0.$$
Here $n_0\varep_0$ is roughly 2. 
Let $\widetilde{M}_{j'}$ be the universal cover of $M_{j'}$.
Using the formula for $P_F$ in terms of $\exp(itD_M)$, the identity $(\ast)$ and   
the fact that $ \exp(\frac{it}{n_0}D_M) $  has propagation less than $\varep_0$ for all $t\in [-2,2]$, we can lift  $ P_F^{(j)} $ to  an element
 $\widetilde{P}_F^{(j)}$ in $(C_L^\ast(\widetilde{M}_{j'})^{\pi_1(M_{j'})})^+$.

\bigno
For any $i<j$, let $m_{i,j}=i+[\frac{j-i}{2}]$  and $m'_{i,j}=i+[\frac{j-i}{4}],$ where $[\frac{j-i}{2}] $ and $[\frac{j-i}{4}]$
are respectively the integer parts of
$\frac{j-i}{2}$ and $\frac{j-i}{4}.$

\bigno
We define
$$P_F^{(i,j)}= \chi_{M_{ m'_{i,j},j}} P_F \chi_{M_{m'_{i,j},j}},$$
 where $ \chi_{M_{m'_{i,j},j}}$ is the characteristic function of $M_{m'_{i,j},j}$.
Let $v$ be an invertible element in the matrix algebra of $C_0(\reals^7)^+$ representing
the generator in $$KO_{-1}(C_0(\reals^7))\cong KO_0(C_0(\reals^8)).$$
Define
\begin{equation*}
\tau_{i,j}=v\otimes P^{(i,j)}_F(0)+I\otimes (I-P^{(i,j)}_F(0))
\end{equation*}
and 
\begin{equation*}
\tau_{i,j}'=v^{-1}\otimes P^{(i,j)}_F(0)+I\otimes (I-P^{(i,j)}_F(0)).
\end{equation*}
Define 
\begin{equation*}
x_{i,j}=\left(
\begin{array}{cc}
I& \tau_{i,j}\\0&I
\end{array}
\right)
\left(
\begin{array}{cc}
I& 0\\-\tau_{i,j}'&I
\end{array}
\right)
\left(
\begin{array}{cc}
I& \tau_{i,j}\\0&I
\end{array}
\right)
\left(
\begin{array}{rr}
0& -I\\I&0
\end{array}
\right)
\end{equation*}
and 
\begin{equation*}
u_{i,j}=x_{i,j}\almostzero x_{i,j}^{-1}.
\end{equation*}

\noindent
Let  $|j-i|$ be large enough such that $$d(M_i, M-M_{m_{i,j}})> 10 n_0 \varep_0 .$$
We define
 $v_{i,j} \in (S^7C^\ast (M_{i, m_{i,j}}))^+$ and
$ w_{i,j}\in (S^7C^\ast (M_{m_{i,j}, j}))^+$ by:
$$ v_{i,j}= \chi_{M_{i, m_{i,j}}} u_{i,j}\chi_{M_{i, m_{i,j}}},$$
$$w_{i,j}= \chi_{M_{ m_{i,j},j}} u_{i,j} \chi_{M_{ m_{i,j},j}},$$
where the identities in $(S^7C^\ast (M_{i, m_{i,j}}))^+$ and $(S^7C^\ast (M_{m_{i,j}, j}))^+$ are respectively identified with the multiplication operators by $\chi_{M_{i, m_{i,j}}}$ and $\chi_{M_{ m_{i,j},j}}$.
By the propagation of $P_F$ and the formula for $u_{i,j}$, we have
$$u_{i,j}=v_{i,j}\oplus w_{i,j}.$$
This equality is proved exactly in the same manner as Claims \ref{long-claim1} and \ref{long-claim2}.
From the definitions of $v_{i,j}$ and $w_{i,j}$, we then have
$\mathrm{prop}(v_{i,j})< 100n_0 \varep_0$ and
$\mathrm{prop}(w_{i,j})< 100 n_0\varep_0.$

\bigno
Let $\widetilde{M}_{i,j'}$ be  the universal cover of $M_{i,j'}$ and let
$\pi_{i,j'}$ be the covering map from
$\widetilde{M}_{i,j'}$ of $M_{i,j'}$.
Again using the identity $(\ast)$ and the formula for $P_F$ in terms of $\exp(itD_M)$ and the small propagation of $ \exp(\frac{it}{n_0}D_M) $, we can lift $ P_F^{(i,j)}$ to an element $ \widetilde{P}_F^{(i,j)}$ in $(C^\ast(\widetilde{M}_{i,j'})^{\pi_1(M_{i,j'})})^+$,
where $j'$ is defined as in the construction of  the lifting of  $P_F^{(j)}$.
Let $\widetilde{u}_{i,j}$ be the lifting of $u_{i,j}$ to $\widetilde{M}_{i,j'}$.
Let $|j-i|$ be large enough such that $$d(M_i, M-M_{m_{i,j}})> 100 n_0 \varep_0 .$$
We define $\widetilde{v}_{i,j} \in (S^7C^\ast (\widehat{M}_{i, m_{i,j}})^{\pi_1(M_{i,j'})})^+$ and
$ \widetilde{w}_{i,j}\in (S^7C^\ast (\widehat{M}_{m_{i,j}, j'})^{\pi_1(M_{i,j'})})^+$ to be the liftings of
$v_{i,j}$ and $w_{i,j}$ respectively, 
where $\widehat{M}_{i, m_{i,j}}=\pi_{i,j'}^{-1} (M_{i, m_{i,j}})$ 
and $\widehat{M}_{m_{i,j}, j'}= \pi_{i,j'}^{-1} ( M_{m_{i,j}, j'}    )$.
We have
\begin{equation*}
\widetilde{u}_{i,j}=\widetilde{v}_{i,j}\oplus \widetilde{w}_{i,j}.
\end{equation*}

\bigno
Next we shall represent the index class
$\sigma([D_M])$ as a $KO$-theory element explicitly constructed  using the above liftings.

\bigno
Let $\{j_k\}$ be a sequence of integers
such that $j_k>k$ for each $k$ and $j_k-k\rightarrow \infty$ as $k\rightarrow\infty$.
Let $z_k$ be   the image of $\widetilde{w}_{k,j_k}$ under the inclusion map from $(S^7C^\ast (\widehat{M}_{m_{k,j_k}, j_k'})^{\pi_1 (M_{k,j_k'})} )^+$ to
$(S^7C^\ast (\widetilde{M}_{k, j_k'})^{\pi_1 (M_{k,j_k'})} )^+$. Let $\pi_{j_k'}$ be the covering map from the universal cover of $\widetilde{M}_{j_k'} $ to $M_{j_k'}$.
Let $y_k$ be the element in the image of  the inclusion map from 
$(S^7C^\ast(\pi_{j_k'}^{-1} (M_{k, j_k'}))^{\pi_1 (M_{j_k'})})^+$ to 
$(S^7C^\ast(\widetilde{M}_{j_k'}))^{\pi_1 (M_{j_k'})})^+$ defined by
$$y_k=\phi_{k, j_k'} (z_k),$$ where $\phi_{k, j_k'}$ is the homomorphism
from  $(S^7C^\ast(\widetilde{M}_{k, j_k'} )^{\pi_1 (M_{k, j_k'})})^+$ to
 $(S^7C^\ast(\widetilde{M}_{j_k'} )^{\pi_1 (M_{j_k'})})^+$.
Note that the existence of this homomorphism follows from Lemma \ref{big-lemma} and Formula \ref{psi-max-formula}.

\bigno
As before, let $\psi_k$ be the natural map $\colon 
S^7C^\ast(\widetilde{M}_{j_k'})^{\pi_1(M_{j_k'})}
\to S^7C^\ast(\widetilde{M}_{j_{k+1}'})^{\pi_1(M_{j_{k+1}'})}$.
%Let $$b_k(s) = \exp(2\pi i(1- s)\psi_k(\widetilde{P}_{F}^{(j_k)}(0)))$$ for all $s\in [0,1]$.
% We define
%$$(b'_k(s))(\lambda) = \exp(2\pi i(1- s)( (1- \lambda )
%\psi_k(\widetilde{P}_{F}^{(j_k)}(0))+\lambda \widetilde{P}_{F}^{( j_{k+1})} (0))$$
%for all $\lambda \in [0,1]$.
We similarly define $z'_k(\lambda)$  by replacing $P_{F}^{(j_k)}(0)$ with
 $(1-\lambda) P_{F}^{(j_k)}(0)+\lambda P_{F}^{(j_{k+1})}(0)$ in the definition of  $z_k$.
Define $y_k'(\lambda)=\phi_{k, j_k}(z_k'(\lambda))$.

\bigno
Let 
\begin{equation*}
\tau_k(\lambda)=v\otimes \left((1-\lambda)\psi_k(\widetilde{P}^{(j_k)}_F(0))
+\lambda \widetilde{P}^{(j_{k+1})}_F(0)\right) +I\otimes \left(I-
 \left((1-\lambda)\psi_k(\widetilde{P}^{(j_k)}_F(0))+\lambda
\widetilde{P}^{(j_{k+1})}_F(0)\right)\right)
\end{equation*}
and 
\begin{equation*}
\tau'_k(\lambda)=v^{-1}\otimes \left((1-\lambda)\psi_k(\widetilde{P}^{(j_k)}_F(0))
+\lambda \widetilde{P}^{(j_{k+1})}_F(0)\right) +I\otimes \left(I-
 \left((1-\lambda)\psi_k(\widetilde{P}^{(j_k)}_F(0))+\lambda
\widetilde{P}^{(j_{k+1})}_F(0)\right)\right)
\end{equation*}
for all $\lambda \in [0,1]$.
For all $s, \lambda\in [0,1]$, define $(w_k(s))(\lambda)$ to be the product
\begin{equation*}
\left(
\begin{array}{cc}
I& (1-s)\tau_k(\lambda)\\0&I
\end{array}
\right)
\left(
\begin{array}{cc}
I& 0\\-(1-s)\tau'_k(\lambda)&I
\end{array}
\right)
\left(
\begin{array}{cc}
I& (1-s)\tau_k(\lambda)\\0&I
\end{array}
\right)
\left(
\begin{array}{rr}
0& -I\\I&0
\end{array}
\right).
\end{equation*}

\medskip\noindent
Define
\begin{equation*}
(c_k(s))(\lambda) =(w_k(s))(\lambda)\almostzero ((w_k(s))(\lambda))^{-1}.
\end{equation*}

\bigskip\noindent
Note that, for each  $\lambda\in[0,1]$, the pair $(z'_k(\lambda), (c_k(\cdot))(\lambda))$ lies in 
$(S^7D_k^\ast)^+$, where $ D_k^\ast$ is as in the definition of $A(Y)$.

\bigno
Let $a_k= (z'_k, c_k)$.  By a homotopy invariance argument,  we have
$$\sigma([D_M])=[(a_1, a_2, \ldots)] -\left[\almostzero\right]\in  KO_0(A(Y)).$$

\noindent
In the above construction, for each $\alpha \geq 1$, we can replace respectively the Dirac  operator $D_M$ by $\alpha D_M$, $n_0$ by $[\alpha n_0]+1$, and $j_k'$ by another natural number $j_{k,\alpha}'$ satisfying
$d(M-M_{j_{k,\alpha}'}) >10\alpha n_0\varep_0$ to obtain the index of $\alpha D_M$:
$$\sigma([\alpha D_M])=[(a_{1, \alpha}, a_{2, \alpha}, \ldots)]
-\left[\almostzero\right] \in  KO_0(A(Y)).$$
Notice that the $KO$-theory class $\sigma([\alpha D_M])\in  KO_0(A(Y))$ is independent of the choice of $\alpha$.

\bigno
For all $k$, we
write $a_{k,\alpha}=(z'_{k,\alpha}, c_{k,\alpha}).$  Let $\tau_{k, \alpha}$ by replacing with $D$
with $\alpha D$ in the definition of $\tau_k$.
By the assumption that $M$  has uniform positive scalar curvature 
and the local nature of the Lichnerowicz formula, we have
\begin{equation*}
\tau_{k,\alpha}\to v\otimes \almostzero +I \otimes\almostzerotwo
\end{equation*}
in the operator norm 
when $\alpha \rightarrow \infty$.
This result implies the vanishing of $\sigma ([D_M])$.
\end{proof}

\bigno
Using the above notation
$D_i^\ast$, we
note that, by Guentner-Yu \cite{Guentner-Yu-2012}, there is a Milnor exact sequence given by
\begin{equation*}
0\to \varprojlim\,^{\!1}KO_\ast(D_i^\ast)\to KO_\ast(A(Y))\to \varprojlim
 KO_\ast(D_i^\ast)\to0. \tag{$\ast$}
\end{equation*}
This sequence gives rise to a commutative diagram
\begin{equation*}
\xymatrix{
0\ar[r]& \varprojlim\,^{\!1} KO_\ast(Y_i, \partial Y_i)\ar[r]\ar[d] & KO^{lf}_\ast(Y)\ar[r]\ar[d] &
\varprojlim KO_\ast(Y_i, \partial Y_i)\ar[r]\ar[d] &0\\
0\ar[r] & \varprojlim\,^{\!1} KO_\ast(D_i^\ast)\ar[r] & KO_\ast(A(Y))\ar[r]_\phi &
\varprojlim KO_\ast(D_i^\ast)\ar[r] &0
}
\end{equation*}
where the map $\phi\colon 
KO_\ast(A(Y))\to
\varprojlim KO_\ast(D_i^\ast)$ is induced by the $\ast$-homomorphism $\pi_i\colon
A(Y)\to D_i^\ast$ from 
\begin{equation*}
A(Y)\equiv \left\{a \in C\left([0, 1], \prod_{i=1}^\infty D_i^\ast\right)\colon
\rho(a(0))=a(1) \right\}
\end{equation*}
to $D_i^\ast$ obtained from the $i$-th component of the evaluation at 0.
We will use this diagram in the next section.

\bigskip
\section{A manifold with exotic positive scalar curvature behavior}

\bigno
We will now construct a noncompact manifold $M$
endowed with a nested exhaustion of compact subsets $M_i$,
 such that the $M_i$ can be endowed with
positive scalar metrics which are in totality incompatible in the sense
that $M$ itself has no metric of uniformly positive scalar curvature.

\bigno
In the last section we introduced a Milnor exact sequence with a $\limone$ term.
We quickly review some properties of this functor. If $\{G_i\}$ is an inverse system
of abelian groups indexed by the positive integers together with a coherent
family of maps $f_{j,i}\colon G_j\to G_i$ for all $j\ge i$, then $\limone G_i$
is categorically defined to be the first derived functor of $\varprojlim$. Eilenberg-Moore
\cite{Eilenberg-Moore-1962} also provides a description in the following. If $\Psi\colon
\prod G_i\to \prod G_i$ is defined by $\Psi(g_i)=(g_i-f_{i+1, i}(g_i))$, then
$\limone G_i$ is defined by $\limone G_i\equiv \mathrm{coker}(\Psi)$. Gray
\cite{Gray-1966} proves that, if each $G_i$ is countable, then $\limone G_i$
is either zero or uncountable.

\bigno
An example of an inverse system with a nontrivial $\limone$ term is
\begin{equation*}
S= \left\{\integers\xleftarrow{\times 3}\integers\xleftarrow{\times 3}\cdots\right\}
\end{equation*}
in which case we have the uncountable group
$\varprojlim^1S=\widehat{\integers}_3/\integers$. Let $\sphere^1$ denote the standard circle.
Consider the composite mapping cylinder $B_S$ of the infinite composite
\begin{equation*}
\sphere^1\leftarrow \sphere^1\leftarrow \sphere^1\leftarrow\cdots
\end{equation*}
which is capped off at the left end (see picture below),
where each map takes $z\in \sphere^1$ to $z^3$. 

%ZZZ
\begin{center}
\setlength{\unitlength}{1cm}
\begin{picture}(15,6)
\put(1,3.5){\line(1,0){9}}
\put(1,5.5){\line(1,0){9}}
%\put(2.5,3.4){\line(0,1){.2}}
%\put(5.5,3.4){\line(0,1){.2}}
%\put(8.5,3.4){\line(0,1){.2}}
\put(11, 4.5){$\cdots$}
\put(11, 5.5){$B_S$}
\qbezier(1,5.5)(-.5,4.5)(1,3.5)
\qbezier(1,5.5)(1.5,4.5)(1,3.5)
%\qbezier(1,5.5)(.5,4.5)(1,3.5)
\qbezier(4,5.5)(4.5,4.5)(4,3.5)
\qbezier(4,5.5)(3.5,4.5)(4,3.5)
\qbezier(7,5.5)(7.5,4.5)(7,3.5)
\qbezier(7,5.5)(6.5,4.5)(7,3.5)
\qbezier(10,5.5)(10.5,4.5)(10,3.5)
\qbezier(10,5.5)(9.5,4.5)(10,3.5)
\put(1.8,5.7){$\sphere^1\times I$}
\put(4.8,5.7){$\sphere^1\times I$}
\put(7.8,5.7){$\sphere^1\times I$}
\put(.3,3.2){$\underbrace{\hskip 3.5cm}$}
\put(2, 2.65){$Y_1$}
\put(.3,2.5){$\underbrace{\hskip 6.5cm}$}
\put(3.25, 1.9){$Y_2$}
\put(.3,1.6){$\underbrace{\hskip 9.5cm}$}
\put(4.5, .9){$Y_3$}
\end{picture}
\end{center}

\bigno
Let $Y_j$ be the given exhaustion of $B_S$. For each $i$, let $\phi_i\colon 
(Y_{i+1}, \partial Y_{i+1})\to (Y_i, \partial Y_i)$ be the obvious collapse map.
Notice that $Y_i$ is contractible and
that $\partial Y_j$ is a circle for all $j$.
Consider the sequence
\begin{equation*}
0\to \varprojlim\,\!\! ^1 KO_n(Y_j, \partial Y_j)\to KO_n^{lf}(B_S)\to \varprojlim KO_n(Y_j,
\partial Y_j)\to 0.
\end{equation*}

\bigskip
\begin{prop}
The group $\varprojlim\,\!\! ^1 KO_0(Y_j, \partial Y_j)$ is nontrivial.
\end{prop}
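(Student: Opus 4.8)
The plan is to compute the inverse system $\{KO_0(Y_j,\partial Y_j)\}$ together with its structure maps $(\phi_i)_\ast$ explicitly and show that $\varprojlim^1$ of this system is nontrivial. First I would identify each term: since $Y_j$ is contractible and $\partial Y_j\cong \sphere^1$, the long exact sequence of the pair $(Y_j,\partial Y_j)$ collapses to give $KO_\ast(Y_j,\partial Y_j)\cong \widetilde{KO}_{\ast-1}(\sphere^1)$, which by Bott periodicity and the known coefficient ring $KO_\ast(\mathrm{pt})$ yields $KO_0(Y_j,\partial Y_j)\cong \widetilde{KO}_{-1}(\sphere^1)\cong KO_{-2}(\mathrm{pt})\cong \integers/2$ — but I should be careful here, because what is actually wanted is the degree in which $\integers$ appears. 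The correct bookkeeping: $KO_\ast(\mathrm{pt})$ is $\integers,\integers/2,\integers/2,0,\integers,0,0,0$ in degrees $0,\dots,7$, and $\widetilde{KO}_\ast(\sphere^1)\cong KO_{\ast-1}(\mathrm{pt})$, so $KO_0(Y_j,\partial Y_j)\cong KO_{-1}(\mathrm{pt})\cong KO_7(\mathrm{pt})=0$ in that naive reading; the point of the statement is presumably that, suitably interpreted (e.g.\ after the seventh suspension built into the definition of $A(Y)$, or tracking the $\integers$ summand in degree $0$ of $KO_\ast(\sphere^1)$ itself rather than the reduced group), one lands on a copy of $\integers$ in each term with the structure map $(\phi_i)_\ast$ acting as multiplication by $3$. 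This degree-chasing is the first thing I would pin down precisely.

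Granting that the inverse system is (up to the relevant regrading) the system $S=\{\integers \xleftarrow{\times 3}\integers\xleftarrow{\times 3}\cdots\}$, the second step is to invoke the Eilenberg--Moore description recalled earlier in the excerpt: $\varprojlim^1 S = \mathrm{coker}(\Psi)$ where $\Psi\colon \prod\integers\to\prod\integers$ is $(g_i)\mapsto(g_i - 3g_{i+1})$. One then computes, exactly as in the displayed example in Section 4, that $\varprojlim^1 S \cong \widehat{\integers}_3/\integers$, the $3$-adic integers modulo the diagonal copy of $\integers$, which is uncountable and in particular nontrivial. The identification of $(\phi_i)_\ast$ with multiplication by $3$ is the crux: it follows from functoriality of $KO$-homology and the fact that the collapse map $\phi_i\colon(Y_{i+1},\partial Y_{i+1})\to(Y_i,\partial Y_i)$ restricts on boundaries to the degree-$3$ self-map $z\mapsto z^3$ of $\sphere^1$, so on $KO_\ast(\sphere^1)$ (hence on the relative groups via the boundary isomorphism) it induces multiplication by $3$ on the $\integers$ summand.

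The main obstacle is the bookkeeping in the first step: being careful about which $KO$-degree carries the $\integers$, how the suspension conventions ($S^7$, eight-periodicity) in the definitions interact, and the precise statement of the long exact sequence for the pair $(Y_j,\partial Y_j)$ with the correct identification of the connecting maps — in particular verifying that the structure maps in the resulting inverse system really are multiplication by $3$ and not, say, multiplication by $\pm 3$ or something that kills the $\limone$. Once the system is correctly identified as $S$ (or a regrading thereof), nontriviality of $\varprojlim^1$ is immediate from Gray's dichotomy quoted above together with the explicit computation $\varprojlim^1 S=\widehat{\integers}_3/\integers\neq 0$, so no further work is needed there. I would also double-check that $\varprojlim^1$ in the category of abelian groups agrees with the $\varprojlim^1$ appearing in the Milnor sequence of Guentner--Yu, which it does since both are the derived functor of $\varprojlim$ over the directed index set $\naturals$.
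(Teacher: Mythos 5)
Your proposal is correct and follows essentially the same route as the paper: use the long exact sequence of the pair and contractibility of $Y_j$ to identify $KO_\ast(Y_j,\partial Y_j)$ with $\widetilde{KO}_{\ast-1}(\partial Y_j)$, observe that the collapse maps restrict to the degree-$3$ self-map of $\sphere^1$ and hence induce multiplication by $3$ on the $\integers$ summand, and conclude $\limone\cong\widehat{\integers}_3/\integers\ne0$ via the Eilenberg--Moore/Gray description. Your concern about the degree is well founded but harmless: the paper's own proof in fact works with $KO_2(Y_i,\partial Y_i)\cong KO_1(\partial Y_i)\supseteq\widetilde{KO}_1(\sphere^1)\cong\integers$, so the ``$KO_0$'' in the statement is an indexing slip, and the argument goes through verbatim in any degree $n$ for which $KO_{n-2}(\mathrm{pt})\cong\integers$.
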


\begin{proof}
We have an exact sequence
\begin{equation*}
KO_2(Y_i)\to KO_2(Y_i/\partial Y_i)\cong KO_2(Y_i, \partial Y_i)\to KO_1(\partial Y_i)
\to KO_1(Y_i)
\end{equation*}
By the contractibility of $Y_i$, we have $\widetilde{KO}_2(Y_i)=0$ and $KO_1(Y_i)=0$. Therefore
$KO_2(Y_i, \partial Y_i)\to KO_1(\partial Y_i)$ is an isomorphism. Consider
the commutative square:
\begin{equation*}
\xymatrix{
KO_2(Y_i, \partial Y_i) \ar[r]^\partial_\cong \ar[d]_{\phi_\ast} & KO_1(\partial Y_i)\ar[d]^{\times 3}\\
KO_2(Y_{i-1}, \partial Y_{i-1})\ar[r]^\partial_\cong  & KO_1(\partial Y_{i-1})\\
}
\end{equation*}
Clearly it follows that $\phi_\ast$ is multiplication by 3, so that at the level of quotients
we have a map $\phi_\ast\colon KO_2(Y_{i+1}, \partial Y_{i+1})\to KO_2(Y_i,
\partial Y_i)$.
\end{proof}

%\mymar{SW: better write down $lf$ when using locally finite homology; it can only cause confusion if you now use your
%own nonstandard notation}
%\mymar{SW: in paragraphs 2 and 3 of the proof, write down the MV sequence and carry calculations
%in detail, leading to explicit determination of groups for $B_S$ and surjectivity statement}
\begin{thm}
\label{conner}
Let $[c]\in KO^{lf}_\ast(B_S)$, where $B_S$ is endowed with an exhaustion by compact sets
$\{Y_i\}$ as above. There is $(M,f)\in \Omega^{spin}(B_S)$ such
that
\begin{enumerate}
\item $f_\ast[D_M]=[c]$;
\item the inverse images $(M_i, \partial M_i)=f^{-1}(Y_i, \partial Y_i)$
are compact manifolds with boundary such that the induced maps
  $\pi_1(M_i)\to \pi_1(Y_i)$ and $\pi_1(\partial M_i)\to \pi_1(\partial Y_i)$
are all isomorphisms.
\end{enumerate}
\end{thm}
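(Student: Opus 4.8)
\emph{Strategy.} The plan is to produce $(M,f)$ by first choosing \emph{any} geometric representative of $[c]$ and then improving it, through operations that alter neither its bordism class nor (hence) $f_*[D_M]$, until it is in good position with respect to the exhaustion and has the required fundamental groups. Condition~(1) is thus arranged at the very first step, and condition~(2) is achieved afterwards by transversality and surgery.

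\emph{Realization of $[c]$.} The $KO$-homology fundamental class of a spin manifold with boundary is computed by its Dirac operator, so realizing $[c]$ amounts to realizing it via the relative Atiyah--Bott--Shapiro orientation, i.e.~to surjectivity of the natural map $\Omega^{spin,lf}_*(B_S)\to KO^{lf}_*(B_S)$ from locally finite spin bordism. Since $B_S=\bigcup Y_i$, this follows from the Milnor sequences of Section~\ref{sec:exhaustion} together with surjectivity of $\Omega^{spin}_*(Y_i,\partial Y_i)\to KO_*(Y_i,\partial Y_i)$ for the finite pairs: the comparison map of Milnor sequences is onto both the $\varprojlim$ and the $\limone$ terms, the latter because $\limone$ is right exact on towers of abelian groups (the usual factor-of-two failure of $\Omega^{spin}_{4k}\to KO_{4k}$ in degrees $\equiv4\bmod8$ is harmless here, as it disappears upon passing to $\limone$ of the tower whose transition maps are $\times3$). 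Thus there is a spin manifold $M$ and a proper map $f\colon M\to B_S$ with $f_*[D_M]=[c]$; here $M$ is automatically noncompact (each $\partial Y_i\ne\emptyset$), so $\Omega^{spin}(B_S)$ means locally finite spin bordism and $f$ is proper. We stress that this must be arranged globally, not piece by piece: for the classes of interest --- those detecting the $\limone$ term, which are precisely the ones that will produce exotic positive scalar curvature behavior --- every restriction of $[c]$ to a finite pair $(Y_i,\partial Y_i)$ vanishes, so matching $[c]$ only on the finite pieces would be vacuous.

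\emph{Good position and normalization of $\pi_1$.} We may assume $\dim M\ge6$; the remaining low-dimensional cases are essentially degenerate and are handled by the suspension device used throughout (cf.~the discussion after Theorem~\ref{amenable}). First homotope $f$ --- a homotopy is a bordism over $B_S$, hence does not disturb $f_*[D_M]=[c]$ --- so that it meets the exhaustion transversally: near each circle $\partial Y_i$ the space $B_S$ is an ``open book'' $\partial Y_i\times C$, where $C$ is a cone on finitely many rays (one from the $Y_i$--side, and three from the $Y_{i,i+1}$--side, the latter being the mapping cylinder of $z\mapsto z^{3}$), and one puts $f$ in general position relative to the spine $\partial Y_i\times\{*\}$. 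Doing this inductively in $i$, using $d(\partial Y_i,\partial Y_j)\to\infty$ and properness to keep successive adjustments disjoint, makes each $M_i:=f^{-1}(Y_i)$ a compact smooth manifold with boundary $\partial M_i=f^{-1}(\partial Y_i)$ and each $M_{i,j}:=f^{-1}(Y_{i,j})$ a compact cobordism from $\partial M_i$ to $\partial M_j$ with $M_j=M_i\cup_{\partial M_i}M_{i,j}$; this is exactly where Conner's technology of manifolds with faces and ``rounding'' enters (compare the proof of Theorem~\ref{thm-pos}). Now normalize the fundamental groups, again inductively in $i$. Since $Y_i$ is contractible and $\partial Y_i=S^1$, the conditions to be achieved are $\pi_1(M_i)=1$ and $\pi_1(\partial M_i)\xrightarrow{\ \cong\ }\integers$. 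Suppose $M_{i-1}$ has already been arranged, with $\pi_1(M_{i-1})=1$ and $\pi_1(\partial M_{i-1})\xrightarrow{\cong}\integers$. After a boundary connected sum with copies of $S^1\times S^{n-2}$ (a null-bordant manifold, absorbed into the neck $M_{i-1,i}$) we may assume $\partial M_i\to\partial Y_i$ has degree $\pm1$, so $\pi_1(\partial M_i)\to\integers$ is onto; then surgeries on $\partial M_i$ along loops that die in $\partial Y_i$ --- realized as surgeries on $M$ along such loops, replacing a tube $S^1\times D^{n-1}$ about the loop by $D^2\times S^{n-2}$ inside a collar of $\partial M_i$ --- kill the kernel and give $\pi_1(\partial M_i)\cong\integers$. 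Because the surgered loops bound in $\partial Y_i$, the traces map into a neighbourhood of $\partial Y_i$, so $f^{-1}(Y_j)=M_j$ for all $j$, the pieces $M_{<i}$ and the already-normalized $\partial M_{<i}$ are untouched, and $f_*[D_M]$ is unchanged. Next, by van Kampen $\pi_1(M_i)=\pi_1(M_{i-1,i})/\langle\!\langle\,\mathrm{im}\,\pi_1(\partial M_{i-1})\,\rangle\!\rangle$; since $\pi_1(\partial M_{i-1})\to\pi_1(Y_{i-1,i})=\integers$ is onto, this group is generated by the image of $\ker(\pi_1(M_{i-1,i})\to\pi_1(Y_{i-1,i}))$, so finitely many surgeries on embedded loops in the interior of $M_{i-1,i}$ that die in $Y_{i-1,i}$ make $\pi_1(M_i)=1$, once more without disturbing $M_{i-1}$, $\partial M_i$, or $f_*[D_M]$. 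The base case $i=1$ is the same argument with $Y_0=\emptyset$.

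The resulting $(M,f)$ satisfies (1) by the second paragraph and (2) by the third; setting $M_{i,j}=\bigcup_{i\le k<j}M_{k,k+1}$ and choosing a complete metric on $M$ with long necks makes $\{M_i;M_{i,j}\}$ an admissible exhaustion lying over $\{Y_i\}$, with $f$ uniformly continuous, proper and coarse, so that $(M,f)$ is of the form demanded by Theorem~\ref{index-theorem-two}. The main obstacle is the good-position step: $B_S$ is not a manifold, so one must make sense of transversality to the singular circles $\partial Y_i$ and of the resulting preimages being manifolds with boundary, and one must organize everything inductively so that the fundamental-group surgeries of later stages neither destroy the transversality already achieved, nor move the pieces $M_{<i}$, nor change the $KO$-homology class --- this is precisely the manifolds-with-faces/rounding bookkeeping for which Conner is cited.
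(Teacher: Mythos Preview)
Your argument is correct and follows the same outline as the paper's---realize $[c]$ by a proper spin bordism element, then improve it piece by piece over the exhaustion so that fundamental groups match---but the paper packages the improvement step more economically. Rather than performing explicit surgeries inductively, the paper invokes a single observation: every class in $\Omega^{spin}_n(\mathrm{pt})$ with $n\ge2$ has a simply connected representative, hence for any compact polyhedron $Z$ and any $n>\dim Z+1$ every class in $\Omega^{spin}_n(Z)$ has a representative $N$ with $\pi_1(N)\cong\pi_1(Z)$; this is applied first to the preimages of the separating circles $\partial Y_i$ and then, relatively, to the preimages of the annular regions $Y_i\smallsetminus Y_{i-1}$, and the pieces are assembled. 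Your explicit loop-killing is an unpacking of exactly this fact. Conversely, you supply two things the paper leaves implicit: the surjectivity of $\Omega^{spin,lf}_\ast(B_S)\to KO^{lf}_\ast(B_S)$ via the Milnor sequence, and the transversality needed to make the preimages manifolds with boundary. Two minor corrections: the neighbourhood of $\partial Y_i$ in $B_S$ is not a global product $\partial Y_i\times C$, since the three sheets on the $Y_{i,i+1}$ side are cyclically permuted by monodromy around $\partial Y_i$ (transversality is unaffected---compose with the height function $B_S\to[0,\infty)$ and make that transverse to the integers); and the theorem's label misled you into expecting Conner's faces/rounding technology, but the paper's proof here does not cite Conner at all---that citation appears only in the proof of Theorem~\ref{thm-pos}, for metric rounding.
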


\begin{proof}
%
%We claim that for any locally finite polyhedron $X$ and any exhaustion 
%$A_ i$  of $X$, and any $n>5$, if $f\colon M\to X$ is a class in 
%$\Omega^{spin, lf}(X)$, there is $g\colon M\to X$  in the same class as $f$ in $\Omega^{spin, lf}(X)$
%such that the restriction maps $g|\colon g^{-1}(A_i)\to A_i$ and $g|\colon g^{-1}(A_{i+1}-A_i)
%\to A_{i+1}-A_i$ to induce isomorphisms on $\pi_1$.
%
%\underline{\hskip 5in}
We consider a proper map $g\colon W^n\to B_S$ with
 $n\ge5$ from a noncompact manifold $W^n$ to $B_S$.  Notice that every 
spin cobordism class in $\Omega^{spin}_n(pt)$ with $n\ge2$ has a simply connected 
manifold representative, 
and consequently for
any compact polyhedron $Z$ and any $n> \dim(Z)+1$, every class in $\Omega_n^{spin}(Z)$
has a manifold representative $N$ with $\pi_1(N) \cong \pi_1(Z)$. Now we apply this observation
 to all of the pieces of the exhaustion of $X$ (i.e. first apply it for the inverse images $g^{-1}(\sphere^1)$
of the separating circles, and then relatively to the inverse images 
$g^{-1}(Y_i-Y_{i-1})$ of the annular regions $Y_i-Y_{i-1}$). These pieces can be assembled to
procure the required $(M,f)\in \Omega^{spin}_n(B_S)$.
\end{proof}

\begin{thm}
\label{limone}
Let  $\xi$ be a nonzero class $\varprojlim\,\!\! ^1 KO_{n+1}(Y_j, \partial Y_j)$ and
consider $\xi$ also as an element of $KO_n^{lf}(B_S)$.
Let $M$ be as given in the above theorem with the exhaustion $(M_i, \partial M_i)$.
Then each $M_i$ has a metric of positive scalar curvature which
is collared at the boundary, but $M$ itself does not have a metric
of uniformly positive scalar curvature.
\end{thm}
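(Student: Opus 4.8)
The plan is to establish the two assertions separately, in each case feeding the concrete geometry of $B_S$ into the results of Sections~2 and~3.

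\emph{Every $M_i$ carries a positive scalar curvature metric collared near $\partial M_i$.}
Since $Y_i$ is contractible and $\partial Y_i\simeq\sphere^1$, the $\pi_1$-isomorphisms in Theorem~\ref{conner}(2) give $\pi_1(M_i)=1$ and $\pi_1(\partial M_i)=\integers$. Because $\xi$ lies in the $\limone$-subgroup of $KO^{lf}_n(B_S)$, it maps to $0$ under $KO^{lf}_n(B_S)\to\varprojlim KO_n(Y_j,\partial Y_j)$, hence restricts to $0$ in every $KO_n(Y_i,\partial Y_i)$; since restriction of the locally finite Dirac class commutes with $f_\ast$, the image under $f$ of the relative $KO$-homology class of $D_{M_i}$ vanishes in $KO_n(Y_i,\partial Y_i)$, and therefore the relative higher index of $D_{M_i}$ --- which factors through that group via the identification $(Y_i,\partial Y_i)\simeq(B\pi_1(M_i),B\pi_1(\partial M_i))=(\mathrm{pt},\sphere^1)$ and the relative Baum--Connes map --- is zero. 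For this pair the relative connective and periodic $KO$-homology are the reduced $ko$- and $KO$-homology of $\sphere^2$ (the mapping cone of $\sphere^1\to\mathrm{pt}$), so the canonical map $p$ is injective, while the relative assembly map $A$ is an isomorphism by Higson--Kasparov for the amenable groups $1$ and $\integers$; hence the Corollary following Theorem~\ref{thm-pos} shows that the Relative Gromov--Lawson--Rosenberg conjecture holds for $(M_i,\partial M_i)$, and the vanishing of its relative index yields the desired collared metric (here $\dim M_i=n\ge 6$).

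\emph{$M$ admits no metric of uniformly positive scalar curvature.} Suppose, for contradiction, that it does. The hypotheses of Theorem~\ref{index-theorem-two} are satisfied: by Theorem~\ref{conner} the map $f\colon M\to B_S$ may be taken uniformly continuous, proper and coarse, the $\{M_i;M_{i,j}\}$ form an admissible exhaustion of $M$ with $f^{-1}(Y_i)=M_i$, $f^{-1}(Y_{i,j})=M_{i,j}$, $f^{-1}(\partial Y_i)=\partial M_i$, and $M$ is spin. Hence $\sigma(f_\ast[D_M])=\sigma(\xi)=0$ in $KO_n(A(B_S))$, and it remains to contradict this. In the commutative diagram of Milnor exact sequences at the end of Section~\ref{sec:exhaustion}, the left-hand vertical map is the $\limone$ of the tower map $\{\mu_i\colon KO_{n+1}(Y_i,\partial Y_i)\to KO_{n+1}(D_i^\ast)\}$, and by commutativity $\sigma(\xi)$ is the image of $\xi\in\limone KO_{n+1}(Y_j,\partial Y_j)$ under it (viewed inside $KO_n(A(B_S))$ via the injective bottom inclusion). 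I claim each $\mu_i$ is an isomorphism. Excision --- via the Mayer--Vietoris argument already used in the paper, as $Y_j=Y_i\cup Y_{i,j}$ with $Y_i\cap Y_{i,j}=\partial Y_i$ --- identifies $KO_\ast(Y_i,\partial Y_i)\cong KO_\ast(Y_j,Y_{i,j})$; the pair $(Y_j,Y_{i,j})$ consists of aspherical compact spaces whose fundamental groups $1$ and $\integers$ are amenable, hence $K$-amenable and satisfy Baum--Connes, so Theorem~\ref{amenable}(1) makes the maximal relative Baum--Connes map $KO_\ast(Y_j,Y_{i,j})\to KO_\ast(C^\ast_{max}(\pi_1 Y_j,\pi_1 Y_{i,j}))$ an isomorphism; and the structure maps of $D_i^\ast=\lim_j C^\ast_{max}(\pi_1 Y_j,\pi_1 Y_{i,j})\otimes\calk$ are induced by $\pi_1$-isomorphisms (both $Y_{i,j}$ and $Y_{i,j+1}$ deformation retract onto $\partial Y_i$), hence are isomorphisms on $KO$-theory. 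Composing these three identifications shows $\mu_i$ is an isomorphism, and by naturality the $\mu_i$ intertwine the bonding maps of the two towers, which on every group are multiplication by $3$ (the collapse maps $\phi_i$ and the transition maps $\rho_{i+1}$ each have degree $3$ on the relevant circle, exactly as in the Proposition preceding Theorem~\ref{conner}). Thus the left-hand vertical map is an isomorphism of the two nontrivial $\limone$-terms, so it carries $\xi\ne 0$ to a nonzero element; consequently $\sigma(\xi)\ne 0$, contradicting $\sigma(\xi)=0$.

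\emph{Main obstacle.} The bordism- and surgery-theoretic bookkeeping --- arranging $f$ to be coarse, verifying that $\{M_i;M_{i,j}\}$ is admissible, and matching the suspension conventions for relative $KO$-homology --- is routine. The substantive point is the second step: identifying, through the Guentner--Yu Milnor sequence and naturality, the image $\sigma(\xi)$ with a nonzero class in the $\limone$-term of the Milnor sequence for $A(B_S)$. This depends on the \emph{isomorphism} form of the relative Baum--Connes map for the amenable building blocks $1$ and $\integers$ (Theorem~\ref{amenable}(1), whose maximal version requires no injectivity hypothesis), and on the observation that both towers carry the degree-$3$ bonding maps, so that the relevant $\limone$ is genuinely uncountable rather than zero.
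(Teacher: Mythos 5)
Your argument is correct, and the second half (non-existence of a uniformly positive scalar curvature metric on $M$) is essentially the paper's own proof: the commutative diagram of Milnor sequences, the identification $D_i^\ast\cong C^\ast_{max}(\pi_1(Y_i),\pi_1(\partial Y_i))\otimes\calk$, the isomorphisms supplied by Theorem \ref{amenable} for the amenable groups $1$ and $\integers$, the conclusion $\sigma(\xi)\ne 0$, and the contradiction with Theorem \ref{index-theorem-two}. (Whether one runs the five lemma on the middle column or tracks $\xi$ through the $\limone$ column is immaterial; your extra remarks on the degree-$3$ bonding maps are not needed once $\xi\ne 0$ is hypothesized, but they do no harm.) Where you genuinely diverge is in producing the collared positive scalar curvature metrics on the pieces $M_i$. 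The paper verifies the relative Gromov--Lawson--Rosenberg statement for the exhaustion by a concrete bordism computation: it identifies $\Omega_n^{spin}(f)$ for the degree-$3$ map $f\colon\integers\to\integers$ with $\Omega_{n-1}(\ast)\otimes\integers_3$, analyzes the kernel of the relative index map case by case mod $4$, and invokes Stolz's theorem that $\ker(\Omega_{n-1}(\ast)\to\integers)$ consists of positively curved manifolds. You instead apply the Corollary to Theorem \ref{thm-pos} directly to the pair $(M_i,\partial M_i)$ with $(\pi_1(M_i),\pi_1(\partial M_i))=(1,\integers)$, checking injectivity of $p$ by hand (the relative $ko$- and $KO$-groups of $(\mathrm{pt},\sphere^1)$ reduce to coefficient groups in nonnegative degrees, where $ko_m\to KO_m$ is an isomorphism) and injectivity of $A$ via Theorem \ref{amenable}/Higson--Kasparov; note that the Corollary, unlike the subsequent Theorem, imposes no injectivity hypothesis on $\pi_1(\partial M_i)\to\pi_1(M_i)$, which is essential here since $\integers\to 1$ is not injective. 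Your route is arguably cleaner in that it works with the pair $(M_i,\partial M_i)$ rather than with the annular pieces, at the cost of leaning on the full strength of Theorem \ref{thm-pos} (hence on the F\"uhring/Stolz machinery extended to pairs) and of the dimension restriction $n\ge 6$ rather than $n\ge 5$; the paper's computation is more self-contained for this specific example. Both verifications are valid.
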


\begin{proof}
We can choose the metric on $Y$ such that the map $f$ in Theorem \ref{conner} is a uniformly continuous proper coarse map.

\bigno
We have a commutative diagram
{\footnotesize
\begin{equation*}
\xymatrix{
0\ar[r] &
\varprojlim\,\!\! ^1 KO_{n+1}(M_i, \partial M_i)\ar[r]\ar[d] &
 KO_n^{lf}(M)\ar[r]\ar[d] &
\varprojlim KO_n(M_i, \partial M_i)\ar[r]\ar[d] &
0\\
0\ar[r] &
\varprojlim\,\!\! ^1 KO_{n+1}(Y_i, \partial Y_i)\ar[r]\ar[d]&
 KO_n^{lf}(B_S)\ar[r] \ar[d] &
\varprojlim KO_n(Y_i, \partial Y_i)\ar[r]\ar[d] &
0\\
0\ar[r] &
\varprojlim\,\!\! ^1 KO_{n+1}(D_i^\ast)\ar[r]&
 KO_n(A(B_S))\ar[r]
& \varprojlim KO_n(D_i^\ast)\ar[r] &
0.\\
}
\end{equation*}
}

\noindent
By the definition of $D_i^\ast$ and homotopy invariance, we have $$D_i^\ast \cong
C^\ast_{max}(\pi_1(Y_{i}), \pi_1(\partial Y_{i}))\otimes
\mathcal{K}.$$
Since the $Y_i$ are contractible and the $\partial Y_i$ are circles, 
 the outer vertical arrows from the second
to third row are isomorphisms by Theorem \ref{amenable},
so the map $KO_n^{lf}(B_S)\to KO_n(A(B_S))$
is also an isomorphism.
Note that by construction the element $\xi$ in $KO_n^{lf}(B_S)$ will lift
to the Dirac class $[D_M]$ in $KO_n^{lf}(M)$. By the commutativity
of the diagram, the image of $[D_M]$ is zero in $\varprojlim KO_n(M_i, \partial M_i)$
so it is zero in $\varprojlim KO_n(D_i^\ast)$. Therefore it is zero in
each $KO_n(D_i^\ast)$. 

%\mymar{replace $\integers$ with $\sphere^1$ and is 0 otherwise; there is only 2-torsion}

\medskip\noindent
Finally we establish the veracity of the relative Gromov-Lawson-Rosenberg conjecture
in our case. Indeed, with respect to the multiplication map $f\colon\integers\xrightarrow{\times 3}
\integers$, we have $\Omega_n^{spin}(f)=\Omega_{n-1}(\ast)\otimes\integers_3$.
Consider the index map of the relative Dirac operator $D\colon \Omega^{spin}_n(f)
\to \integers_3$. When $n\not\equiv1\modd 4$, we have $\ker(D)=0$.
Otherwise,  when $n\equiv1\modd 4$, the kernel $\ker(D)$ is
$\sphere^1\times \mathrm{ker}(\Omega_{n-1}(\ast)\to\integers)$.
This latter kernel $\ker(\Omega_{n-1}(\ast)\to \integers)$ consists of 
closed manifolds of positive scalar curvature according to Stolz. 
Therefore there is a positive scalar curvature metric
on each piece $M_i$ of the exhaustion that is collared around the boundary. Moreover
the image of $[D_M]$ is nonzero in $KO_n(A(B_S))$ so $M$ itself has no
metric of uniformly positive scalar curvature by Theorem \ref{index-theorem-two}.
\end{proof}

\bigskip
\section{A manifold with uncountably many connected 
components of positive scalar curvature metrics}
\label{sec:components}

\bigno
In this section we use the previously developed theory to identify a connected
noncompact manifold $M$ such that $PS(M)$,
the space of complete positive scalar curvature metrics on $M$
equipped with the $C^\infty$-topology, has uncountably many connected components.
    
%\mymar{check reference to GB}
\bigno
In various spin cases, it can be shown
using index theory that $PS(M)$ has infinitely many concordance classes. In fact, one
can prove that the 7-sphere $\sphere^7$ is such a manifold  (see Gromov-Lawson
\cite{Gromov-Lawson-1980} or Lawson-Michelsohn \cite{Lawson-Michelsohn-1989}).
With separability
and the openness of positivity, it is an argument in point-set topology to see that $PS(M)$
has at most countably many components when $M$ is compact.
 These properties
may fail in the noncompact case. In the compact open topology, positivity is not necessarily
an open condition. In the uniform topology, we typically do not have separability.

\bigno
In the proof of the following
theorem, we refer the reader to the paper of Xie-Yu \cite[Theorem A]{Xie-Yu-2012}, which develops
the notion of a relative higher index $\ind D_{g_1, g_2}$ on a spin closed manifold $N$
with two Riemannian metrics $g_1$ and $g_2$. This relative 
index is defined to be the higher index of the Dirac operator  $D_{g_1, g_2}$  on the  infinite cylinder $N\times\reals$, where
the cross section $N\times \{x\}$ is endowed with $g_1$ if $x< -1$ and with
$g_2$ if $x> 1$ and the metric in $N\times [-1,1]$ can be chosen to be arbitrary.
The nonvanishing of this relative index in $KO_\ast(C^\ast_r\pi_1(N))$ gives information
about the concordance classes of positive scalar curvature metrics on $N$. In the case
when the manifold $M$ is not compact but has an admissible exhaustion by compact sets, a similar
theory shows that a relative higher index can be constructed in $KO_\ast(A(M))$, where
$A(M)$ is the algebra constructed in section \ref{sec:exhaustion}.

\bigno
Prior to the theorem we also make the following observation. 
 Let $\pi$ be a fixed finitely presented group with generators $g_1,
\ldots, g_s$ and relations $r_1, \ldots, r_t$. 
Let $n\ge5$
Execute $s$ successive 0-surgeries on
$\sphere^n$ to produce a manifold $K'$ with fundamental group $F_s$, the free group on
$s$ generators. The process of {\it surgery on maps} (see Ranicki \cite{Ranicki-2002} for
explicit details) shows that one can then perform a 1-surgery on $K'$ to produce a manifold
with fundamental group $F_s/\langle r_1\rangle$, where $\langle r_1\rangle$ is the
subgroup of $F_s$ normally generated by $r_1$. After performing these 1-surgeries
successively with respect to $r_2,\ldots, r_t$,
we obtain a manifold $K$ with fundamental group $\pi$. Since $K$
is constructed from the sphere $\sphere^n$ from
surgeries of codimesion at least 3, it follows from the Gromov-Lawson
surgery theorem \cite[Theorem A]{Gromov-Lawson-1980} that $K$ also has a metric of positive
scalar curvature.

%
%
% and let $K$
%be the space obtained by executing a 0-surgery on $\sphere^n$; i.e.~$K$ is constructed
%by removing two disks $\disk^n$ from $\sphere^n$ and attaching a cylinder $\disk^1\times \sphere^{n-1}$.
%Up to homotopy this procedure attaches a 1-cell and removes an $n$-cell. The latter
%has no effect on the fundamental group when $n\ge3$. 

\bigno
An easy example of a manifold with uncountably many components of positive scalar
curvature metrics is the disjoint union of countably many copies of $\sphere^7$. Here
we present a connected example.

\begin{thm}
\label{uncountable}
There is a connected noncompact manifold $M$ for which the set $PS(M)$ of components of positive
scalar curvature metrics on $M$ is uncountable.
\end{thm}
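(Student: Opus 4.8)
The plan is to construct $M$ as a noncompact spin manifold carrying an admissible exhaustion modeled on the telescope $B_S$ of the previous section, to produce one complete positive scalar curvature metric $g_0$ on $M$, and then to realize each element of an uncountable $\limone$-group as the relative higher index $\ind D_{g_0,g}$ of $g_0$ against a second complete positive scalar curvature metric $g$ on $M$. Since two metrics in the same connected component of $PS(M)$ are concordant (a smooth path of psc metrics yields a concordance) and the relative higher index is a concordance invariant, distinct relative indices force distinct components, so this will produce uncountably many of them.

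First I would fix a large dimension $n\ge 6$ and apply Theorem~\ref{conner} with $[c]=0$ to the exhaustion $\{Y_i;Y_{i,j}\}$ of $B_S$, obtaining a spin manifold $M^n$, a uniformly continuous proper coarse map $f\colon M\to B_S$, and an admissible exhaustion $\{M_i;M_{i,j}\}$ with $f^{-1}(Y_i)=M_i$, $f^{-1}(Y_{i,j})=M_{i,j}$, $f^{-1}(\partial Y_i)=\partial M_i$, the inclusions inducing isomorphisms $\pi_1(M_i)\cong\pi_1(Y_i)=1$ and $\pi_1(\partial M_i)\cong\pi_1(\partial Y_i)=\integers$. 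Since $1$ and $\integers$ are amenable, $K$-amenable, satisfy Baum--Connes, have finite classifying spaces and cohomological dimension $<n$, the Relative Gromov--Lawson--Rosenberg conjecture holds for each $(M_i,\partial M_i)$ by the results following Conjecture~\ref{conjecture-relative-glr}; as $f_\ast[D_M]=0$, its image in $\varprojlim KO_\ast(D_i^\ast)$ vanishes, the relative higher index of each $(M_i,\partial M_i)$ is zero, and each $M_i$ admits a psc metric collared at $\partial M_i$. Choosing these boundary collar metrics compatibly and gluing gives a complete psc metric $g_0$ on $M$. Exactly as in Theorem~\ref{limone}, $D_i^\ast\cong C^\ast_{max}(\pi_1(Y_i),\pi_1(\partial Y_i))\otimes\calk$ and $KO_\ast(D_i^\ast)\cong KO_\ast(Y_i,\partial Y_i)$ with transition maps identified with multiplication by $3$, so the Milnor sequence of the previous section exhibits an uncountable subgroup $\limone KO_{\ast+1}(D_i^\ast)\cong\widehat{\integers}_3/\integers$ of $KO_\ast(A(M))$ (uncountability by Gray~\cite{Gray-1966}).

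Next, using the relative higher index theory for complete psc metrics on manifolds with admissible exhaustions indicated above (the analogue for $A(M)$ of Xie--Yu~\cite{Xie-Yu-2012}) together with the relative form of the Rosenberg--Stolz~\cite{Rosenberg-Stolz-1995} and Xie--Yu~\cite{Xie-Yu-2012} realization results---that classes in $KO_\ast(C^\ast_{max}(\pi_1(M_i),\pi_1(\partial M_i)))$ coming from relative psc-bordism are realized as relative higher indices of honest psc metrics collared at $\partial M_i$---I would attach to each sequence $(m_i)\in\prod_i\integers$ a complete psc metric $g_{(m_i)}$ on $M$ by modifying $g_0$ only inside the annular pieces $M_{i,i+1}$: in the $i$-th such piece insert a psc ``twist block'' of uniformly bounded size realizing relative index $m_i$ times a fixed generator of the relevant $\integers$, leaving $g_0$ unchanged near every seam $\partial M_i$, so that $g_{(m_i)}$ is again complete and the pieces glue. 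By additivity of the relative higher index and the identification of the transition maps with $\times 3$, the class $\ind D_{g_0,g_{(m_i)}}\in KO_\ast(A(M))$ depends only on the residue of $(m_i)$ in $\prod_i\integers/\Psi(\prod_i\integers)=\limone(\integers\xleftarrow{\times 3}\integers\xleftarrow{\times 3}\cdots)$, and every element of $\limone KO_{\ast+1}(D_i^\ast)$ arises this way. Hence for $\xi\ne\eta$ in this uncountable group $\ind D_{g_\xi,g_\eta}=\xi-\eta\ne 0$, so the $g_\xi$ are pairwise non-concordant and lie in pairwise distinct components of $PS(M)$, giving uncountably many components.

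The main obstacle is this last construction: building the twist blocks so that they are simultaneously psc, collared, of uniformly bounded geometry (so $g_{(m_i)}$ stays complete), and of the prescribed relative indices, and then the bookkeeping showing that the total class in $KO_\ast(A(M))$ is detected precisely by its $\limone$-coordinate and that $(m_i)\mapsto\ind D_{g_0,g_{(m_i)}}$ surjects onto $\limone KO_{\ast+1}(D_i^\ast)$---here the compatibility of the annular twists with the degree-$3$ collapse maps of the exhaustion and the injectivity $\limone KO_{\ast+1}(D_i^\ast)\hookrightarrow KO_\ast(A(M))$ from the Milnor sequence do the essential work.
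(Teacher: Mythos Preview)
Your approach is genuinely different from the paper's, and the paper's is considerably simpler: it does not use $\limone$ at all. The paper fixes a closed spin $W^{n+1}$ with nontrivial higher $\widehat A$-genus and $\pi=\pi_1(W)$, builds a closed spin $N^n$ with $\pi_1(N)=\pi$ carrying a psc metric $\alpha$, and uses a cobordism (built from $N\times I$ and $W$ by surgeries) to produce a second psc metric $\alpha'$ on $N$ with $\ind D_{\alpha,\alpha'}\ne 0$ in $KO_\ast(C_r^\ast\pi)$. The manifold is then the infinite connected sum $M=N\#N\#\cdots$ with the obvious exhaustion, and on each summand one independently places $\alpha$ or $\alpha'$. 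For two distinct choice-sequences $\beta,\beta'$, they differ at some $i$-th summand, and Pimsner's theorem for the free product $\pi^{\ast i}$ shows $(\pi_i)_\ast\ind D_{\beta,\beta'}\ne 0$ in $KO_\ast(D_i^\ast)$; the Milnor sequence then gives $\ind D_{\beta,\beta'}\ne 0$ in $KO_\ast(A(M))$. Uncountability comes straight from the $2^{\aleph_0}$ binary choices, detected in the $\varprojlim$ term, not the $\limone$ term.

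Your route has two genuine gaps. First, the construction of the base metric $g_0$: relative GLR gives a psc metric on each $(M_i,\partial M_i)$ collared at $\partial M_i$, but with no control on the induced metric on $\partial M_i$, so ``choosing these boundary collar metrics compatibly and gluing'' is precisely the hard step and is not justified---you would need relative GLR with a \emph{prescribed} boundary metric, which is strictly stronger than anything proved in Section~2. Second, and more fundamentally, your twist blocks require, for each annular piece $M_{i,i+1}$ (with $\pi_1\cong\integers$), two psc metrics agreeing near both boundary components whose relative higher index hits a chosen generator of $KO_\ast(C^\ast(\integers))$ lying outside the image of $KO_\ast(\reals)$. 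Nothing in the $B_S$ telescope supplies such a twist; to manufacture even one you need an external input---a manifold with nontrivial higher $\widehat A$-genus mapping to the circle factor---which is exactly the engine of the paper's construction. So your scheme does not bypass the paper's key idea; it presupposes it and then layers on $\limone$ bookkeeping that the result does not require.
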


%\mymar{$(N\times I)\ W$ has fundamental group $\pi\ast\pi$. for each generator $g$ of $]pi$, consider
%$gg_1^{-1}$}
%\mymar{a few things need to be explained here}
%\mymar{change $\pi'$ to $\pi$ at the end; kill $g_1g_2^{-1}$ where $g_1$ and $g_2$ are the same
%but in different $\pi$. remove last paragraph}
\begin{proof}
We provide a general construction that provides a host of examples. Let $W$ be an
$(n+1)$-dimensional  spin manifold with nontrivial higher $\widehat{A}$-genus. For example,
we may take $W$ to be the torus $T^{n+1}$. Let $\pi=\pi_1(W)$. By the discussion above, we
can produce a manifold $N^n$ with a positive
scalar curvature metric $\alpha$ and $\pi_1(N)=\pi$.
We can perform a 0-surgery on the disjoint union
of $N\times I$ and $W$ to create a connected manifold $X'$. Let $\pi'=\pi_1(X')$ with a classifying
map $\alpha\colon \pi'\to B\pi'$. Let $[\beta]$ represent the class of $\beta$ in  
$\Omega^{spin}_{n+1}(B\pi')$. 
Execute additional surgeries
(via surgery on maps) on
$X'$ to arrive at a manifold $X$ with fundamental group $\pi$
and two boundaries components both homeomorphic to $N$.
%\mymar{SW: specify what you mean by ``this space"; and what it means to be ``unchanged"? Is
%the class in $\Omega^{spin}(B\pi)$ unchanged? What is the original class?}
(In other words, since $\pi'=\pi\ast\pi$, we kill each element of $\pi'$ of the form
$g_1g_2^{-1}$, where $g_1$ and $g_2$ represent the same element of $\pi$.)
The fundamental group of $X$ may not be $\pi'$, but, since $X$ is cobordant to $X'$, there
is a map $X\to B\pi'$ which is also represented by $[\beta]$ in $\Omega^{spin}_{n+1}(B\pi')$.
%The group $\pi'$ acts on some covering space $Y$ of $X$
%properly and cocompactly.

\medskip\noindent
Let $\alpha'$ be the positive scalar curvature metric on $N$ as the other boundary component of $X$,
as constructed by the Gromov-Lawson surgery theorem.
Let $D_{\alpha, \alpha'}$ be the Dirac operator on $N\times\reals$. Here the Riemannian
metric on $N\times (-\infty, -1)$ is defined using the product metric
of $\alpha$ on $N$ and the standard metric on $(-\infty, -1)$, and the metric on
$N\times (1,\infty)$ is defined using the product metric of $\alpha$ on $N$
and the standard metric on $(1, \infty)$. The metric on $N\times [-1, 1]$ can be arbitrary.
From the nontriviality of the higher $\widehat{A}$-genus for $W$, we can infer
that the  relative higher index $\ind D_{\alpha, \alpha'}$ of $N$ is nonzero
in $KO_\ast(C_r^\ast\pi)$.

\medskip\noindent
This nonzero
condition implies that $\alpha$ and $\alpha'$ lie in different connected
components of $PS(N)$.
%\mymar{SW: need standard references about smoothing things out near gluieing}
Form the infinite connected sum $M=N\# N\# \cdots$ with the obvious
exhaustion $M_i=(\underbrace{N\#\cdots \# N}_i)-\disk^n$. Apply the Gromov-Lawson
surgery theorem to modify the metric near each glueing so that $M$ is positively curved
at every point. 
On each summand $N$ we make a choice to endow
$N$ with either $\alpha$ or $\alpha'$.  Clearly the number of metrics
on $M$ constructed in this way is uncountable, and these metrics are
all in different connected components of $PS(M)$ by an application of our relative higher
index, which  lies in $KO_\ast(A(M))$ as explained in the following.
If $\beta, \beta'$ are
two distinct metrics on $M$ defined in this way, let $D_{\beta, \beta'}$ be
the Dirac operator on the product $M\times \reals$. Here the
metric on $M\times (-\infty, -1)$ is defined using the product metric
of $\alpha$ on $N$ and the standard metric on $(-\infty, -1)$, and the metric on
$M\times (1,\infty)$ is defined using the product metric of $\alpha$ on $N$
and the standard metric on $(1, \infty)$.
 The metric on $M\times [-1, 1]$ can be arbitrary. We can define
 a relative higher index  $\mathrm{ind}(D_{\beta, \beta'})$ 
 of $D_{\beta, \beta'}$ in $KO_\ast(A(M))$.  
By the relative higher index theorem in Xie-Yu \cite[Theorem A]{Xie-Yu-2012},  the relative higher index $\ind(D_{\alpha, \alpha'})$
does not lie in the image of the map $i_\ast\colon KO_\ast(\reals)
\to KO_\ast(C^\ast_r\pi_1(N))$, where $\reals$ is the one-dimensional
real  $C^\ast$-algebra and $i\colon \reals\to C_r^\ast\pi_1(N)$ is the 
inclusion map.  The Pimsner
Theorem (see \cite[Theorem 18]{Pimsner-1986}) allows us to compute $KO_\ast(D_i^\ast)$.
The above facts and the relative index theorem of \cite{Xie-Yu-2012} imply  that if $\beta, \beta'$ are
two distinct metrics on $M$ defined as above,
then
$(\pi_i)_\ast(\mathrm{ind}(D_{\beta, \beta'}))$ 
 is nonzero in $KO_\ast(D_i^\ast)$ for $i$ sufficiently large. Here 
 $\pi_i\colon A(M)\to D_i^\ast $ 
is the $\ast$-homomorphism defined in Section 3.
The Milnor sequence ($\ast$) given after Theorem \ref{index-theorem-two}
tells us that the index is nonzero in $KO_\ast(A(M))$. Therefore $\beta$ and
$\beta'$ are two metrics of $M$ that lie in different connected components
of $PS(M)$. Since $\beta$ and $\beta'$ are arbitrary, it follows that $PS(M)$
has uncountably many components.
\end{proof}

\bigno
In a sequel to this paper, we will provide examples of noncompact contractible spaces
with exotic positive scalar curvature behavior.

\bigskip\bigskip

%Bibliography
%\nocite{*}
%\bibliographystyle{amsplain}
%\bibliography{lim-one-biblio}

\providecommand{\bysame}{\leavevmode\hbox to3em{\hrulefill}\thinspace}
\providecommand{\MR}{\relax\ifhmode\unskip\space\fi MR }
% \MRhref is called by the amsart/book/proc definition of \MR.
\providecommand{\MRhref}[2]{%
  \href{http://www.ams.org/mathscinet-getitem?mr=#1}{#2}
}
\providecommand{\href}[2]{#2}

\end{document}